\newcommand{\R}{\mathbb{R}}         
\newcommand{\N}{\mathbb{N}}         
\renewcommand{\S}{\mathbb{S}}       
\newcommand{\Ac}{\mathcal{A}}
\newcommand{\Bc}{\mathcal{B}}
\newcommand{\Cc}{\mathcal{C}}
\newcommand{\Dc}{\mathcal{D}}
\newcommand{\Ec}{\mathcal{E}}
\newcommand{\Xc}{\mathcal{X}}
\newcommand{\Ps}{\mathscr{P}}
\newcommand{\Ab}{\mathbf{A}}
\newcommand{\Bb}{\mathbf{B}}
\newcommand{\Cb}{\mathbf{C}}
\newcommand{\Db}{\mathbf{D}}
\newcommand{\Gb}{\mathbf{G}}
\newcommand{\Xb}{\mathbf{X}}
\newcommand{\cge}{\succcurlyeq}
\newcommand{\cle}{\preccurlyeq}
\newcommand{\cl}{\prec}
\newcommand{\cg}{\succ}
\newcommand{\kron}{\otimes}         
\newcommand{\rhi}{\mathrm{RH}_\infty}
\newcommand{\diag}{\mathrm{diag}}
\newcommand{\col}{\mathrm{col}}
\newcommand{\Del}{\Delta}
\newcommand{\ga}{\gamma}
\newcommand{\eps}{\varepsilon}
\newcommand{\1}{\mathbf{1}}
\renewcommand{\t}{\tilde}
\newcommand{\h}{\hat}
\newcommand{\mat}[2]{\left(\begin{array}{@{}#1@{}}#2\end{array}\right)} 
\newcommand{\smat}[1]{\left(\begin{smallmatrix}#1\end{smallmatrix}\right)}
\newcommand{\mstrut}[1]{\rule{0pt}{#1}}
\let\oldhline=\hline 
\renewcommand{\hline}{\oldhline\mstrut{2.5ex}}
\let\oldhdashline=\hdashline 
\renewcommand{\hdashline}{\oldhdashline\mstrut{2.5ex}}
\newcommand{\teq}[1]{\quad\text{#1}\quad} 
\newenvironment{red_test}{\color{red}}{} 
\newtheoremstyle{thmstyle}
{1ex\parskip} 
{1ex\parskip} 
{\normalfont\itshape} 
{} 
{\normalfont\sffamily\bfseries} 
{} 
{.5em} 
{} 
\theoremstyle{thmstyle}
\newtheorem{theorem}{Theorem}[section]
\newtheorem{lemma}[theorem]{Lemma}
\newtheorem{corollary}[theorem]{Corollary}
\newtheorem{definition}[theorem]{Definition}
\newtheorem{remark}[theorem]{Remark}
\newcounter{frstequation}
\newcounter{scndequation}
\newcounter{trdequation}
\newcommand{\scndlabel}[1]{%
	\protected@write \@auxout {}{\string \newlabel{#1}{{\@currentlabel}{\thepage}{}{equation.\thesection.\the\value{parentequation}\alph{frstequation}}{}} }%
	\hypertarget{equation.\thesection.\the\value{parentequation}\alph{frstequation}}{}
}
\newcommand{\dlabel}[2]{ 
	\setcounter{frstequation}{\value{equation}}%
	\setcounter{scndequation}{\value{equation}}%
	\addtocounter{scndequation}{1}%
	%
	\edef\theequation{\theparentequation\alph{frstequation},\alph{scndequation}}
	\def\@currentlabel{\theparentequation\alph{frstequation}}\scndlabel{#1}
	\def\@currentlabel{\theparentequation\alph{scndequation}}\label{#2}
	\addtocounter{equation}{1}
}
\newcommand{\tlabel}[3]{ 
	\setcounter{frstequation}{\value{equation}}
	\setcounter{scndequation}{\value{equation}}%
	\setcounter{trdequation}{\value{equation}}%
	\addtocounter{scndequation}{1}%
	\addtocounter{trdequation}{2}%
	\edef\theequation{\theparentequation\alph{frstequation},\alph{scndequation},\alph{trdequation}}
	\def\@currentlabel{\theparentequation\alph{frstequation}}\scndlabel{#1}
	\def\@currentlabel{\theparentequation\alph{scndequation}}\scndlabel{#2}
	\def\@currentlabel{\theparentequation\alph{trdequation}}\label{#3}
	\addtocounter{equation}{2}
}
\begin{document}

	\begin{frontmatter}
		
		\title{IQC Based Analysis and Estimator Design for Discrete-Time Systems Affected by Impulsive Uncertainties\tnoteref{sponsors}}
		
		\journal{Nonlinear Analysis: Hybrid Systems}
		
		\tnotetext[sponsors]{This project has been funded by Deutsche Forschungsgemeinschaft (DFG, German Research Foundation) under Germany's Excellence Strategy -EXC 2075 -390740016, which is gratefully acknowledged by the authors.}

		\author[address]{Tobias Holicki\corref{mycorrespondingauthor}}
		\cortext[mycorrespondingauthor]{Corresponding author}
		\ead{tobias.holicki@imng.uni-stuttgart.de}
		
		\author[address]{Carsten W. Scherer}

		\address[address]{Department of Mathematics, University of Stuttgart, Pfaffenwaldring 5a, 70569 Stuttgart, Germany}

		\begin{abstract}
			We propose novel quadratic performance tests for linear discrete-time impulsive systems based on viewing these systems as feedback interconnections of some non-impulsive linear system with an impulsive operator.
			In order to systematically analyze such interconnections, we employ the framework of integral quadratic constraints and propose novel constraints of this kind for capturing the behavior of the involved impulsive operator.
			As a major benefit, the modularity of this framework permits seamless extensions to interconnections affected by heterogeneous uncertainties in a straightforward fashion.
			This contrasts with alternative approaches which are based on capturing the system's impulsive behavior by means of a clock.
			%
			Building upon the developed analysis criteria, we characterize the existence of non-impulsive estimators for such impulsive interconnections in a lossless fashion and in terms of linear matrix inequalities.
			Finally, our approach is illustrated by means of several numerical examples.
		\end{abstract}
		
		\begin{keyword}
			Linear impulsive systems\sep Robust analysis\sep Robust estimator design\sep Integral quadratic constraints
		\end{keyword}
		
	\end{frontmatter}


	\section{Introduction}
	
	Impulsive systems form a rich class of hybrid system which have applications, e.g., in system biology, robotics as well as communication systems, and which have been intensively studied, e.g., in \cite{GoeSan09, HesLib08, YeMic98, BaiSim89, HadChe06, Yan01}.
	They are usually employed for modeling dynamic processes that undergo instantaneous changes at certain events. This class encompasses switched as well as sample-data systems, as shown for example in 
	\cite{Bri17b,NagHes08}.

	Most of the present paper covers the analysis of linear discrete-time impulsive systems where the sequence of impulse instants satisfies some dwell-time condition and does not depend on the state of the underlying system.
	The most common stability tests for such systems are based on the lifting procedure as explained in \cite{CheFra95} in the context of sample-data systems, on the introduction of a clock or timer for capturing the impulsive behavior \cite{Bri13}, or on considering admissible impulse paths similarly as in \cite{XiaTra19} for switched systems.
	Here, we pursue a different route and interpret an impulsive system as a feedback interconnection of a linear non-impulsive system with an impulsive operator. This enables us to employ the framework of integral quadratic constraints (IQCs) \cite{MegRan97} for analyzing such an interconnection.
	This framework is well-known for its modularity and its flexibility in dealing with systems affected by various types of uncertainties or nonlinearities \cite{MegRan97, VeeSch16}. Moreover, the resulting stability or performance tests often, but not always, involve the least conservatism if compared to alternative approaches.
	A related clock-based extension of the IQC framework for analyzing uncertain impulsive systems has been established in \cite{Hol22}. However, to the best of our knowledge, the direct handling of impulsive systems within the IQC framework is unheard of and might pave the way for dealing with several more and otherwise challenging robust design problems.

	As an illustration, we consider in this paper's second part the design of non-impulsive estimators for impulsive systems.
	In the context of impulsive systems, most estimation approaches aim at providing an approximation of the underlying system's state by constructing an impulsive observer which has access to the sequence of impulse instants \cite{MedLaw09, BerSan18, ConPer17}.
	In contrast, we aim to approximate some output of a given impulsive system, which does not necessarily equal its full state, by means of an estimator which does not have access to the sequence of impulse instants. This constitutes a genuine robust estimation problem.
	It is by now well-known that such estimation problems admit convex solutions in various situations \cite{SchKoe08, ScoFro06, Vee15, GerDeo01, Ger99, VenSei16}, and we show that this is also true for our new analysis criteria covering impulsive systems.
	%
	Since these are based on IQCs, our design approach is conceptually related to the ones in \cite{SchKoe08, ScoFro06, Vee15}, but we employ a rather different strategy of proof.
	Technically, we apply the elimination lemma in \cite[Theorem 2]{Hel99} instead of the more commonly used one in \cite[Theorem 3]{Hel99}, which removes the need for any factorization of the dynamic multiplier describing the IQC and, thus, drastically simplifies the required arguments.
	%
	%
	%
	It is somewhat surprising that this has not been noticed so far and its implications
	for other challenging robust or gain-scheduling synthesis problems based on IQCs remains to be explored.
	
	\vspace{1ex}
	
	\noindent\textbf{Outline.} %
	%
	The remainder of the paper is organized as follows. After a short paragraph on notation, we describe the analysis problem for the considered class of feedback interconnections involving an impulsive operator in Section \ref{DI::sec::prob_set_ana}. We also recall various quadratic performance tests available in the literature in Section \ref{DI::sec::recap}.
	In Sections \ref{DI::sec::Recap_IQCs} and \ref{DI::sec::main_ana}, we provide our main analysis results, an novel extension of an IQC Theorem proposed in \cite{SchVee18, Sch21} and the construction of IQCs with nontrivial terminal cost for the underlying impulsive operator.
	In Section \ref{DI::sec::exa_ana}, we compare our new tests with those recapitulated in Section~\ref{DI::sec::recap}.

	Section \ref{DI::sec::es} is organized similarly. The underlying robust estimation problem is described in Section \ref{DI::sec::prob_set_es} and we provide our lossless convex design criteria in Section \ref{DI::sec::main_es}. We recall an alternative synthesis result from \cite{Hol22} based on alternative closed-loop analysis criteria in Section \ref{DI::sec::es_alternative} and compare both approaches in Section \ref{DI::sec::exa_es} by means of a numerical example.

	\vspace{1ex}
	
	\noindent\textbf{Notation.} %
	%
	$\N$ ($\N_0$) denotes the set of positive (nonnegative) integers and $\S^n$ the set of symmetric real $n \times n$ matrices. $I_n$ stands for the $n\times n$ identity matrix and the subscript is omitted if it is not relevant.
	We let $\ell_2^n := \big\{ x\in \ell_{2e}^n\colon \|x\|^2_{\ell_2} := \sum_{t = 0}^\infty x(t)^\top x(t) < \infty \big\}$, where $\ell_{2e}^n$ denotes the set of sequences 
	in $\R^n$.
	Finally, we use the abbreviation
	\begin{equation*}
		\diag(X_1, \dots, X_N) := \mat{ccc}{X_1 &  & 0 \\ & \ddots & \\ 0 & & X_N}
	\end{equation*}
	for matrices $X_1, \dots, X_N$, utilize the Kronecker product $\kron$ as defined in \cite{HorJoh91} and indicate objects that can be inferred by symmetry or are not relevant with the symbol ``$\bullet$''.

	\section{Analysis}
	
	\subsection{Problem Setting}\label{DI::sec::prob_set_ana}

	For real matrices of appropriate dimensions and some initial condition $x(0)\in \R^n$, we consider the discrete-time feedback interconnection
	\begin{subequations}
		\label{DI::eq::sys}	
		\begin{equation}
			\mat{c}{x(t+1) \\ z(t) \\ e(t)} = \mat{ccc}{A & B_w & B \\ C_z & D_{zw} & D_{zd} \\ C & D_{ew} & D}\mat{c}{x(t) \\ w(t)\\ d(t)},\quad
			w(t) = \Del(z)(t)
			\label{DI::eq::sys_intercon}
		\end{equation}
		for $t\in \N_0$, where $d$ denotes some generalized disturbance and $e$ is some output signal that often plays the role of an error term that is desired to be kept small. Here, the operator $\Del$ is characterized by a sequence of impulse instants $-1 = t_0 < t_1 < t_2 < \dots$ and defined by
		\begin{equation}
			\Del(z)(t) := \begin{cases}
				z(t) & \text{ if }t =t_k \text{ for some } k \in \N \\
				0 & \text{ otherwise.}
			\end{cases}
			\label{DI::eq::sys::del}
		\end{equation}
	\end{subequations}
	We emphasize that this description is very flexible and covers, in particular, standard discrete-time linear impulsive systems with a flow and jump (or impulsive) component as described by
	\begin{equation}
		\mat{c}{x(t+1) \\ e(t)} = \mat{cc}{A & B\\ C & D}\mat{c}{x(t) \\ d(t)},\quad
		\mat{c}{x(t_k+1) \\ e(t_k)} = \mat{cc}{A_J & B_{J} \\ C_{J} & D_{J}}\mat{c}{x(t_k) \\ d(t_k)}
		\label{DI::eq::sys_standard}
	\end{equation}
	for $t \in \N_0 \setminus \{t_1, t_2\dots\}$ and $k\in \N$; see, e.g., \cite{LiuLiu07, LiuChe15}.
	Indeed, in \eqref{DI::eq::sys}, one just choose the describing matrices
	\begin{equation*}
		\mat{ccc}{\bullet & B_w & \bullet \\ C_z & D_{zw} & D_{zd}\\ \bullet & D_{ew} & \bullet}
		:= \mat{c|cc|c}{\bullet & A_J - A & B_{J} - B & \bullet\\ \hline I & 0 & 0 & 0\\ 0 & 0 & 0 & I \\ \hline \bullet & C_{J} -C & D_{J} - D & \bullet}.
		%
	\end{equation*}
	%
	%
	Conversely, if $I - D_{zw}$ is nonsingular, then \eqref{DI::eq::sys} translates into  \eqref{DI::eq::sys_standard} with 
	\begin{equation*}
		\mat{cc}{A_J & B_J \\ C_J & D_J} := \mat{cc}{A & B \\ C & D} + \mat{c}{B_w \\ D_{ew}} (I - D_{zw})^{-1}\mat{cc}{C_z & D_{zd}}.
	\end{equation*}
	Let us stress already at this point one of the benefits of the description \eqref{DI::eq::sys} over
	the standard one in \eqref{DI::eq::sys_standard}. In \eqref{DI::eq::sys}, we can easily model problems in which only a subset of the state and output entries are subject to jumps.
This emerges naturally if one thinks of an $H_\infty$ design setup for a complex interconnection where, typically, several dynamic weights are introduced in order to shape the frequency response of the closed-loop. Clearly, then not all states and outputs undergo jumps, which could be beneficial for synthesis. More such aspects will be discussed later.


\vspace{1ex}

In the sequel, we are mostly interested in stability and quadratic performance of the interconnection \eqref{DI::eq::sys} which are defined in a standard fashion as follows.

\begin{definition}[Well-Posedness, Stability and Quadratic Performance]\mbox{}
	\label{DI::def::stab}
	\begin{itemize}
		\item The interconnection \eqref{DI::eq::sys} is said to be well-posed if $\det(I - D_{zw}) \neq 0$ holds.
		\item The interconnection \eqref{DI::eq::sys} is said to be (exponentially) stable if it is well-posed and if there exists constants $M > 0$, $\rho \in (0, 1)$ such that $\|x(t)\| \leq M \rho^t \|x(0)\|$ holds for all $t \in \N_0$ and any initial condition $x(0) \in \R^n$ in case of $d = 0$.
		\item Suppose that $P = \smat{Q & S \\ S^\top & R}\in \S^{n_e +n_d}$ with $Q \cge 0$. Then the interconnection \eqref{DI::eq::sys} is said to achieve quadratic performance with index $P$ if it is stable and if there exists some $\eps > 0$ such that
		\begin{equation*}
			\sum_{t = 0}^\infty (\bullet)^\top P \mat{c}{e(t) \\ d(t)} \leq -\eps \|d\|_{\ell_2}^2
		\end{equation*}
		holds for the initial condition $x(0) = 0$ and for all $d \in \ell_2^{n_d}$.
	\end{itemize}
\end{definition}

It is well-known that stability and performance of the interconnection \eqref{DI::eq::sys} or, with analogous definitions, of the impulsive system \eqref{DI::eq::sys_standard} not only depends on their describing matrices but also heavily on the properties of the sequence of impulse instants $(t_k)_{k\in \N_0}$.
Crucial are the so-called dwell-times $t_{k+1} - t_k -1$, which equal the duration of how long the flow component is active until the next jump occurs. Thus analysis results are usually formulated under bounds on these dwell-times. We will mostly consider the following ones.

\begin{definition}[Dwell-Time Conditions]
	The strictly increasing sequence $(t_k)_{k\in \N_0}$ of integers with $t_0 = -1$ is said to satisfy
	\begin{enumerate}
		\item an arbitrary dwell-time (ADT) condition if this sequence is not further constrained, i.e.,
		\begin{equation*}
			t_{k+1} - t_k - 1 \in \N_0 \teq{ for all }k \in \N_0;
			\label{DI::eq::ADT}
			\tag{ADT}
		\end{equation*}
		\item an exact dwell-time condition (EDT) if there exists some $T \in \N$ such that
		\begin{equation}
			t_{k+1} - t_{k}-1 = T \teq{ for all }k \in \N_0;
			\label{DI::eq::EDT}
			\tag{EDT}
		\end{equation}
		%
		\item a minimum dwell-time condition (MDT) if there exists some $T_{\min} \in \N$ such that
		\begin{equation}
			t_{k+1} - t_{k}-1 \geq T_{\min} \teq{ for all }k\in \N_0;
			\label{DI::eq::MDT}
			\tag{MDT}
		\end{equation}
		\item a range dwell-time condition (RDT) if there exists $0 < T_{\min} \leq T_{\max}$ such that
		\begin{equation}
			t_{k+1} - t_{k}-1 \in [T_{\min}, T_{\max}]
			\teq{ for all }k \in \N_0.
			\label{DI::eq::RDT}
			\tag{RDT}
		\end{equation}
	\end{enumerate}
\end{definition}

Considering sequences of impulse instants with arbitrary dwell-time is often not appropriate because some knowledge about the dwell-times is typically available in practice. Moreover, the resulting analysis criteria might put too severe constraints on the underlying system. For example, they require both matrices $A$ and $A_J$ in \eqref{DI::eq::sys_standard} to be Schur stable.
However, these analysis criteria are often taken as a starting point since they are simple and the least demanding ones in terms computation times.
%

Typically it is beneficial to take available knowledge on the dwell-time into account.
The exact dwell-time condition means that once an impulse occurs, the flow component is active $T$ times until the next impulse occurs. In particular, for a sequence satisfying \eqref{DI::eq::EDT}, the interconnection \eqref{DI::eq::sys} and the system \eqref{DI::eq::sys_standard} are both periodic with period $T+1$ which enables a number of techniques for their analysis.
Both minimum and range dwell-time conditions are practically relevant extensions and lead to aperiodic systems in general.

\subsection{Recap of Some Available Results on Standard Discrete-Time Impulsive Systems}\label{DI::sec::recap}

For reasons of comparison and completeness, we recall in this subsection some of the available stability analysis results for impulsive systems described by \eqref{DI::eq::sys_standard}; most of the corresponding proofs are given in the appendix. For brevity, we only consider results for sequences of impulse instants with \eqref{DI::eq::RDT}, but those for sequences with \eqref{DI::eq::EDT} are easily recovered by taking $T_{\min} := T_{\max} := T$ and, roughly speaking, those for sequences with \eqref{DI::eq::MDT} are obtained by taking $T_{\max} := T_{\min}$ and by including a suitable LMI that ensures quadratic performance of the system's flow component separately.

\vspace{1ex}

The first recapped analysis result 
essentially relies on lifting as presented in \cite{CheFra95} in the context of sampled-data control. In a nutshell, the idea is to express the output $e(t)$ only in terms of the state $x(t_k+1)$ at time $t_k+1$ and the inputs $d(t_k+1), \dots, d(t)$ for $t \in [t_k+1, t_{k+1}]$, which is possible due to the discrete-time variation of constants formula.

\begin{theorem}[Lifting Based Quadratic Performance Test]
	\label{DI::theo::basic_Lyapunov}
	The impulsive system \eqref{DI::eq::sys_standard} achieves quadratic performance with index $P$ for all sequences $(t_k)_{k\in\N_0}$ satisfying \eqref{DI::eq::RDT} if there exists $X \in \S^n$ satisfying
	\begin{subequations}
		\label{DI::theo::eq::basic_Lyapunov_LMIs}
		\begin{equation}
			\dlabel{DI::theo::eq::basic_Lyapunov_LMIsa}{DI::theo::eq::basic_Lyapunov_LMIsb}
			X \cg 0
			\teq{ and }	(\bullet)^\top \mat{cc|c}{X & 0 & \\ 0 & -X & \\ \hline  && I_{k+1} \kron P}
			\mat{cccccc}{A_JA^k & A_JA^{k-1}B & A_JA^{k-2}B & \dots & A_JB & B_J
				\\ I & 0 & \dots & \dots & 0&0\\ \hline C & D& 0 & \dots & 0 & 0\\
				0 & I & 0 & \dots & 0 & 0\\
				C A & C B & D & & \vdots & \vdots\\
				0 & 0 & I &  &  \vdots & \vdots\\
				\vdots & \vdots & \vdots & \ddots &  \vdots & \vdots\\
				CA^{k-1} & CA^{k-2}B & CA^{k-3}B & \dots & D & 0 \\
				0 & 0 & 0 & \dots & I & 0 \\
				C_J A^{k} & C_J A^{k-1} B & C_J A^{k-2} B & \dots & C_JB & D_J \\ 0 & 0 & 0 & \dots & 0 & I} \cl 0
		\end{equation}
	\end{subequations}
	for all $k \in [T_{\min}, T_{\max}] \cap \N$.
\end{theorem}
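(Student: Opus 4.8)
The plan is to prove the test by a lifting argument over each dwell-time interval, followed by a standard storage-function (dissipativity) argument. First I would fix a sequence $(t_k)_{k\in\N_0}$ satisfying \eqref{DI::eq::RDT}, take $X\in\S^n$ satisfying \eqref{DI::theo::eq::basic_Lyapunov_LMIs}, and consider an arbitrary solution of \eqref{DI::eq::sys_standard}. I set $\xi_k := x(t_k+1)$ for the state just after the $k$-th jump (so $\xi_0 = x(0)$, since $t_0 = -1$), write $m_k := t_{k+1}-t_k-1\in[T_{\min},T_{\max}]\cap\N$ for the $k$-th dwell-time, and stack the inputs and outputs over the $k$-th interval $[t_k+1,t_{k+1}]$ as
\[
	\mathbf{d}_k := \col\big(d(t_k+1),\dots,d(t_{k+1})\big), \qquad \mathbf{v}_k := \col\Big(\smat{e(t_k+1)\\ d(t_k+1)},\,\dots,\,\smat{e(t_{k+1})\\ d(t_{k+1})}\Big).
\]
Using the discrete-time variation-of-constants formula for the flow on $[t_k+1,t_{k+1}-1]$ together with the single jump at $t_{k+1}$, I would show that $\smat{\xi_{k+1}\\ \xi_k\\ \mathbf{v}_k}$ equals the block matrix in \eqref{DI::theo::eq::basic_Lyapunov_LMIsb}, with its index chosen as $m_k$, applied to $\smat{\xi_k\\ \mathbf{d}_k}$.

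With the storage function $V(\xi) := \xi^\top X\xi$, the LMI \eqref{DI::theo::eq::basic_Lyapunov_LMIsb} invoked with index $m_k$ then immediately yields the one-step dissipation inequality
\[
	V(\xi_{k+1}) - V(\xi_k) + \sum_{t=t_k+1}^{t_{k+1}} (\bullet)^\top P\smat{e(t)\\ d(t)} \ \le\ -\eps'\big(\|\xi_k\|^2 + \|\mathbf{d}_k\|^2\big) \qquad (k\in\N_0),
\]
where $\eps'>0$ may be chosen uniformly in $k$ because \eqref{DI::theo::eq::basic_Lyapunov_LMIsb} is strict and only finitely many dwell-times $m_k$ occur. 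For stability I would put $d=0$: then $\mathbf{d}_k=0$, each summand equals $e(t)^\top Q e(t)\ge 0$ by $Q\cge 0$, and $V(\xi_{k+1})\le (1-\eps'/\lambda_{\max}(X))\,V(\xi_k)$ since $X\cg0$; hence $V(\xi_k)$, and with it $x(t_k+1)$, decays geometrically in $k$. As each interval has length at most $T_{\max}+1$, as $x$ evolves within it under the fixed flow matrix $A$, and as $t_k$ grows at least linearly in $k$ by \eqref{DI::eq::RDT}, this translates into a bound $\|x(t)\|\le M\rho^t\|x(0)\|$ for suitable $M>0$, $\rho\in(0,1)$; well-posedness is automatic because the representation \eqref{DI::eq::sys} of \eqref{DI::eq::sys_standard} has $D_{zw}=0$.

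For the performance inequality I would first invoke the standard fact that exponential stability together with boundedness of all describing matrices forces $x,e\in\ell_2$ whenever $d\in\ell_2^{n_d}$. Then, with $x(0)=0$ (so $\xi_0=0$), summing the dissipation inequality over $k=0,\dots,N-1$ and using that the intervals $[t_k+1,t_{k+1}]$ partition $\N_0$ gives
\[
	V(\xi_N) + \sum_{t=0}^{t_N} (\bullet)^\top P\smat{e(t)\\ d(t)} \ \le\ -\eps'\sum_{t=0}^{t_N}\|d(t)\|^2 .
\]
Since $V(\xi_N)\ge0$, since $t_N\to\infty$, and since both sums converge by $e,d\in\ell_2$, letting $N\to\infty$ delivers $\sum_{t=0}^\infty (\bullet)^\top P\smat{e(t)\\ d(t)}\le -\eps'\|d\|_{\ell_2}^2$, i.e.\ quadratic performance with $\eps:=\eps'$.

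The step I expect to be the main obstacle is the first one: checking that the displayed block matrix really is the lifted one-cycle map $\smat{\xi_k\\ \mathbf{d}_k}\mapsto\smat{\xi_{k+1}\\ \xi_k\\ \mathbf{v}_k}$. This is pure index bookkeeping in the variation-of-constants expansion — lining up the Toeplitz blocks $CA^{j-1-i}B$, the final jump-row entries $C_JA^{m_k-1-i}B$ and $D_J$, and the interleaved identity rows that carry the feedthrough of $d$ into $\mathbf{v}_k$ — but it is the only genuinely error-prone part. The one further point that needs care is the passage from exponential stability to $e\in\ell_2$: this does not follow from the dissipation inequality alone when $Q$ is singular, and is instead obtained from the exponentially decaying closed-loop state-transition matrix via a discrete convolution (Young's inequality) estimate.
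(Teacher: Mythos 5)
Your proposal is correct and follows essentially the same route as the paper's proof: interpret \eqref{DI::theo::eq::basic_Lyapunov_LMIsb} as a one-cycle dissipation inequality for the lifted map $\smat{x(t_k+1)\\ \mathbf{d}_k}\mapsto\smat{x(t_{k+1}+1)\\ x(t_k+1)\\ \mathbf{v}_k}$ with storage $x^\top Xx$, obtained via the variation-of-constants formula, then telescope for performance and use $Q\cge 0$ plus a uniform strictness margin over the finitely many dwell-times for exponential decay. Your extra care about why the infinite performance sum converges (via $e\in\ell_2$ from exponential stability, since the dissipation inequality alone does not give this when $Q$ is singular) is a point the paper glosses over, but it does not change the argument's structure.
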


While the LMIs \eqref{DI::theo::eq::basic_Lyapunov_LMIs} are easily derived and conveniently passed to any SDP solver, the resulting performance test can potentially be conservative.
In the context of switched systems, this has been resolved in \cite{XiaTra19} by Lyapunov arguments involving so-called admissible and postadmissible switching paths of some length $L \in \N$. This idea can be easily modified to apply for impulsive systems \eqref{DI::eq::sys_standard} as well. It has been shown that the resulting test for stability is nonconservative if      $L$ is not fixed a priori.
As the price to be paid, even for a fixed length $L$, this test involves a (much) larger computational burden than the one in Theorem \ref{DI::theo::basic_Lyapunov}.

\begin{theorem}[Path Based Quadratic Performance Test]
	\label{DI::theo::path_Lyapunov}
	Let $L \in \N$ and let $e_1, \dots, e_L$ denote the standard unit vectors in $\R^L$. Define by
	\begin{equation*}
		\Ps_L \!:=\!\left\{\sum_{k = 1}^{m} e_{j_k} ~\colon~ m\in \N,~ j_1 - 1 \leq T_{\max},~ L - j_m \leq T_{\max}\text{ and } j_{k+1} - j_k - 1 \in [T_{\min}, T_{\max}] \text{ for all }k\right\} \subset \R^L
	\end{equation*}
	the set of admissible impulse paths of length $L$ and, for each $p \in \Ps_L$, by
	\begin{equation*}
		\Ps^+_L(p) = \left\{ q \in \Ps_L ~\colon \mat{c}{p \\ q} \in \Ps_{2L}\right\} \subset \Ps_L
	\end{equation*}
	the set of postadmissible impulse paths of length $L$. Then the impulsive system \eqref{DI::eq::sys_standard} achieves quadratic performance with index $P$ for all $(t_k)_{k\in \N_0}$ satisfying \eqref{DI::eq::RDT} if there exist $X_{p_1}, \dots, X_{p_{|\Ps_L|}} \in \S^n$ satisfying
	\begin{subequations}
		\label{DI::theo::eq::path:based:LMIs}
		\begin{equation}
			X \cg 0
			\label{DI::theo::eq::path:based:LMIsa}
		\end{equation}
		and
		\begin{equation}
			\label{DI::theo::eq::path:based:LMIsb}
			(\bullet)^\top\! \mat{cc|c}{X_q & 0 & \\ 0 & \!-X_p & \\ \hline  && I_{L}\! \kron\! P}\vspace{-1ex}
			\mat{ccccc}{A_{p} & A_{(p_2, \dots, p_L)}B_{p_1} & A_{(p_3,\dots, p_L)}B_{p_2} & \dots & B_{p_L}
				\\ I & 0 & \dots & \dots & 0\\ \hline C_{p_1} & D_{p_1}& 0 & \dots & 0 \\
				0 & I & 0 & \dots & 0 \\
				C_{p_2} A_{p_1} & C_{p_2} B_{p_1} & D_{p_2} & & \vdots \\
				0 & 0 & I &  &  \vdots \\
				\vdots & \vdots & \vdots & \ddots &  \vdots\\
				C_{p_L} A_{(p_1,\dots, p_{L-1})} & C_{p_L} A_{(p_2,\dots, p_{L-1})} B_{p_1} & C_{p_L} A_{(p_3,\dots, p_{L-1})} B_{p_2} & \dots & D_{p_L} \\ 0 & 0 & 0 & \dots & I} \cl 0
		\end{equation}
	\end{subequations}
	for all $q \in \Ps_L^+(p)$ and all $p = (p_1, \dots, p_L) \in \Ps_L$. Here, we employ the abbreviations
	\begin{equation*}
		A_v :=  A^{1 - v_r} A_J^{v_r}\cdot \ldots \cdot A^{1 - v_1} A_J^{v_1},
		\quad B_s := B^{1-s}B_J^s, \quad C_s := C^{1-s}C_J^s \teq{ and }D_s := D^{1-s}D_J^s
	\end{equation*}
	for any vector $v = (v_1, \dots, v_r)\in \{0, 1\}^r$ with $r \in \N$ and any scalar $s \in \{0, 1\}$.
\end{theorem}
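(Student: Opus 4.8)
The plan is to run a dissipativity/telescoping argument in the spirit of Theorem~\ref{DI::theo::basic_Lyapunov}, but over sliding windows of the \emph{fixed} length $L$ (not aligned with the impulse instants) and with a path-dependent storage matrix. Fix a sequence $(t_k)_{k\in\N_0}$ satisfying \eqref{DI::eq::RDT}; we may and do assume $L>T_{\max}$, so that every block of $L$ consecutive transition times contains at least one impulse instant (otherwise one simply enlarges $\Ps_L$ by the all-flow path). Let $\sigma\colon\N_0\to\{0,1\}$ be the mode signal, i.e.\ $\sigma(t)=1$ exactly when an impulse occurs at time $t$, so that the update $x(t)\mapsto x(t+1)$ and the output $e(t)$ in \eqref{DI::eq::sys_standard} use the jump matrices iff $\sigma(t)=1$. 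For $i\in\N_0$ put $p^{(i)}:=(\sigma(iL),\sigma(iL+1),\dots,\sigma((i+1)L-1))\in\{0,1\}^L$. The first, combinatorial step is to verify — from \eqref{DI::eq::RDT} and $L>T_{\max}$ alone — that $p^{(i)}\in\Ps_L$ and that $\smat{p^{(i)}\\p^{(i+1)}}\in\Ps_{2L}$, hence $p^{(i+1)}\in\Ps_L^+(p^{(i)})$: the leading (respectively trailing) run of flow transitions inside any length-$L$ window is part of a maximal flow run of $(t_k)$ and thus has at most $T_{\max}$ elements, which yields the boundary conditions $j_1-1\le T_{\max}$ and $L-j_m\le T_{\max}$ in the definition of $\Ps_L$, while the inter-jump gaps inside the window are exactly those of $(t_k)$ and therefore lie in $[T_{\min},T_{\max}]$.

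Second, I would identify the matrix $\mathcal{M}_p$ that is multiplied from the left by $(\bullet)^\top$ in \eqref{DI::theo::eq::path:based:LMIsb} with the lifted input–output map of \eqref{DI::eq::sys_standard} over a window whose transition types are $p_1,\dots,p_L$. By the discrete-time variation-of-constants recursion one has, for $\tau\in\N_0$ (with the empty-product convention $A_{(\,)}:=I$),
\begin{gather*}
	x(\tau+m) = A_{(p_1,\dots,p_m)}\, x(\tau) + \sum_{l=1}^{m} A_{(p_{l+1},\dots,p_m)} B_{p_l}\, d(\tau+l-1)\quad(0\le m\le L),\\
	e(\tau+j-1) = C_{p_j}\, x(\tau+j-1) + D_{p_j}\, d(\tau+j-1)\quad(1\le j\le L),
\end{gather*}
and substituting the first identity (with $m=j-1$, and with $m=L$) into the second reproduces $\mathcal{M}_p$ row by row (the interspersed $I$-rows merely read off $x(\tau)$ and the $d(\tau+j)$). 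Hence, with $\xi:=\col(x(\tau),d(\tau),\dots,d(\tau+L-1))$,
\begin{align*}
	\xi^\top \mathcal{M}_p^\top \diag(X_q,-X_p,I_L\kron P)\, \mathcal{M}_p\, \xi
	&= x(\tau+L)^\top X_q\, x(\tau+L) - x(\tau)^\top X_p\, x(\tau)\\
	&\quad + \sum_{j=0}^{L-1} (\bullet)^\top P \mat{c}{e(\tau+j)\\ d(\tau+j)}.
\end{align*}
Because $\{(p,q):q\in\Ps_L^+(p),\ p\in\Ps_L\}$ is finite, the strict inequalities \eqref{DI::theo::eq::path:based:LMIsb} furnish a single $\eps_0>0$ with $\mathcal{M}_p^\top\diag(X_q,-X_p,I_L\kron P)\mathcal{M}_p\cle-\eps_0 I$ for all these pairs.

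Third, I would combine the two ingredients. Along any trajectory of \eqref{DI::eq::sys_standard}, apply the displayed identity with $\tau=iL$, $p=p^{(i)}$, $q=p^{(i+1)}$; since the block weighting $x((i+1)L)$ in window $i$ carries the same matrix $X_{p^{(i+1)}}$ as the block weighting $x((i+1)L)$ in window $i+1$, the quantities $\h V_i:=x(iL)^\top X_{p^{(i)}} x(iL)$ telescope and give, for every $N$,
\begin{equation*}
	\h V_N - \h V_0 + \sum_{t=0}^{NL-1}(\bullet)^\top P \mat{c}{e(t)\\ d(t)} \le -\eps_0\sum_{t=0}^{NL-1}\|d(t)\|^2 .
\end{equation*}
For stability, set $d=0$, invoke $Q\cge0$ and the positivity constraint \eqref{DI::theo::eq::path:based:LMIsa} (read as $X_p\cg0$ for all $p\in\Ps_L$) to get $\h V_{i+1}\le(1-\eps_0/\lambda_{\max})\h V_i$ with $\lambda_{\max}:=\max_{p\in\Ps_L}\lambda_{\max}(X_p)$, so $\h V_i$ and $\|x(iL)\|$ decay geometrically; a crude uniform bound on products of at most $L$ factors from $\{A,A_J\}$ propagates this to all $t$, yielding $\|x(t)\|\le M\rho^t\|x(0)\|$ (well-posedness being automatic, as \eqref{DI::eq::sys_standard} is \eqref{DI::eq::sys} with $D_{zw}=0$). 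For quadratic performance, take $x(0)=0$ (so $\h V_0=0$), discard $\h V_N\ge0$, and let $N\to\infty$; exponential stability together with $d\in\ell_2^{n_d}$ forces $e\in\ell_2^{n_e}$, so the left-hand side converges and we arrive at $\sum_{t=0}^\infty(\bullet)^\top P\smat{e(t)\\d(t)}\le-\eps_0\|d\|_{\ell_2}^2$, i.e.\ quadratic performance with index $P$ and $\eps:=\eps_0$.

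I expect the genuinely delicate part to be the first, combinatorial step: pinning down the precise dictionary between $(t_k)$, the mode signal $\sigma$, the fixed-length window partition, and the sets $\Ps_L$ and $\Ps_L^+(p)$ — in particular checking the boundary constraints in the definition of $\Ps_L$ for a generic, non jump-aligned window, and disposing of the first window cleanly via $x(0)=0$. The algebraic identification of $\mathcal{M}_p$ and the subsequent telescoping and $\eps$-bookkeeping are routine, mirroring the proof of Theorem~\ref{DI::theo::basic_Lyapunov}.
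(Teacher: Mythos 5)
Your proof is correct and follows essentially the same route as the paper's: partition time into consecutive windows of length $L$, read off the admissible path $p^{(i)}\in\Ps_L$ with $p^{(i+1)}\in\Ps_L^+(p^{(i)})$, identify the outer factor of \eqref{DI::theo::eq::path:based:LMIsb} with the lifted window dynamics, and telescope the path-dependent storage $x(iL)^\top X_{p^{(i)}}x(iL)$. Your explicit handling of the case $L\le T_{\max}$ and of the combinatorial boundary conditions is in fact slightly more careful than the paper, which leaves these points implicit and only writes out the stability part in detail.
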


In a nutshell, the first idea is to exploit that the asymptotic behavior of the impulsive system \eqref{DI::eq::sys_standard} with a vanishing disturbance $d = 0$ and with a sequence $(t_k)_{k\in \N_0}$ of impulse instants satisfying \eqref{DI::eq::RDT} coincides with the one of the system
\begin{equation}
	\label{DI::eq::sysL}
	x(L(k+1)) = A_{p^{(k)}} x(Lk)
\end{equation}
for $k \in \N_0$ and some sequence $(p^{(k)})_{k\in \N_0}$ with elements in $\{0, 1\}^L$ corresponding to $(t_k)_{k\in \N_0}$; this is essentially only a consequence of the employed abbreviations.
Second, one notes that we actually have $p^{(k)} \in \Ps_L$ for all $k$ since we know that the sequence of impulse instants satisfies \eqref{DI::eq::RDT}, i.e., any $p^{(k)}$ must be an admissible impulse path. This usually excludes many of the elements in $\{0, 1\}^L$.
Third, one takes another step and also considers the concatenation of admissible paths. More precisely,  one notes that for a given admissible path only a subset of the paths in $\Ps_L$ results in a concatenated path that is admissible, i.e., contained in $\Ps_{2L}$. In particular, we must have $p^{(k+1)} \in \Ps_{L}^+(p^{(k)})$ for all $k \in \N_0$. 

As an example let us suppose that $T_{\min} = 2$, $T_{\max} = 3$ and $L = 5$. Further, we assume that the first four elements of the sequence $(t_k)_{k\in \N_0}$ are given by $t_0 = -1$, $t_1 = 3$, $t_2 = 6$ and $t_3 = 9$. Then the first three states of the system \eqref{DI::eq::sysL} are given by $x(0)$,
\begin{equation*}
	\arraycolsep=2pt
	x(5) \!=\! AA_JAAAx(0)
	=A^{1-0}A_J^0A^{1-1}A_J^1A^{1-0}A_J^{0}A^{1-0}A_J^{0}A^{1-0}A_J^0x(0)
	= A_{p^{(0)}}x(0)
	\text{ ~for~ }
	p^{(0)} := \mat{ccccc}{0 & 0 & 0 & 1 & 0}^\top
\end{equation*}
and $x(10) = A_JAAA_JAx(5) = A_{p^{(1)}}x(5)$ for $p^{(1)} := (0, 1, 0, 0, 1)^\top$. Moreover, one can check that the set of admissible paths corresponding to $(T_{\min}, T_{\max}, L) = (2, 3, 5)$ is given by
\begin{equation*}
	\Ps_{L} = \left\{\mat{c}{0 \\ 1 \\ 0 \\ 0 \\ 0}, \mat{c}{0 \\ 0 \\ 1 \\ 0 \\ 0}, \mat{c}{0 \\ 0 \\ 0 \\ 1 \\ 0}, \mat{c}{1 \\ 0 \\ 0 \\ 0 \\ 1}, \mat{c}{1 \\ 0 \\ 0 \\ 1 \\ 0}, \mat{c}{0 \\ 1 \\ 0 \\ 0 \\ 1} \right\}
	\teq{ and that }
	\Ps_{L}^+(p^{(0)}) = \left\{\mat{c}{0 \\ 1 \\ 0 \\ 0 \\ 0},  \mat{c}{0 \\ 0 \\ 1 \\ 0 \\ 0}, \mat{c}{0 \\ 1 \\ 0 \\ 0 \\ 1}\right\}
\end{equation*}
is the corresponding set of postadmissible paths for $p^{(0)}$ which contains $p^{(1)}$.

\vspace{2ex}

As the key trouble of the quadratic performance tests in Theorem \ref{DI::theo::basic_Lyapunov} and \ref{DI::theo::path_Lyapunov}, 
they can neither be nicely generalized to settings involving systems affected by uncertainties nor used for the design of controllers. This stems from the non-convex dependence of the involved inequalities on the describing matrices of \eqref{DI::eq::sys_standard}.
%

In the case of linear impulsive systems with a continuous-time flow component, it has been shown in \cite{Bri13} that this problem can be circumvented by alternative Lyapunov arguments involving a so-called clock. This improvement does, again, not come for free and one usually faces a larger computational burden since many more decision variables are involved. In the discrete-time case and for sequences $(t_k)_{k\in \N_0}$ with \eqref{DI::eq::RDT}, the clock is defined as
\begin{equation}
	\theta:\N_0 \to \N_0,\ \
	\theta(t) := t - t_k-1 \teq{ for all }t \in [t_k+1, t_{k+1}] \cap \N_0 \teq{ and all }k \in \N_0.
	\label{DI::eq::clock}
\end{equation}
It plays a key role in the following performance test, whose specialization to analyzing stability is the discrete-time counterpart of Theorem 2.2 in \cite{Bri13}. In \cite{XiaTra19} it is even shown that the clock and the path based approach can be combined to obtain a less conservative test in which the describing matrices of \eqref{DI::eq::sys_standard} enter in an affine fashion; this is, however, rather expensive and not repeated here.

\begin{theorem}[Clock Based Quadratic Performance Test]
	\label{DI::theo::clock}
	The impulsive system \eqref{DI::eq::sys_standard} achieves quadratic performance with index $P$ for all $(t_k)_{\N_0}$ satisfying \eqref{DI::eq::RDT} if there exist $X_0, \dots, X_{T_{\max}} \in \S^n$ satisfying
	\begin{subequations}
		\label{DI::theo::eq::clock_Lyapunov_LMIs}
		\begin{equation}
			\tlabel{DI::theo::eq::clock_Lyapunov_LMIsa}{DI::theo::eq::clock_Lyapunov_LMIsb}{DI::theo::eq::clock_Lyapunov_LMIsc}
			X_k \cg 0,\quad
			(\bullet)^\top \mat{cc|c}{X_{k+1} & 0 & \\ 0 & -X_k & \\ \hline && P} \mat{cc}{A & B \\ I & 0 \\ \hline C & D \\ 0 & I} \cl 0
			\teq{ and }
			(\bullet)^\top \mat{cc|c}{X_0 & 0 &\\ 0 & -X_k& \\ \hline && P} \mat{cc}{A_J & B_J \\ I & 0 \\ \hline C_J & D_J \\ 0 & I}\cl 0
		\end{equation}
	\end{subequations}
	for all indices $k$ contained in $[0, T_{\max}] \cap \N_0$, $[0, T_{\max}-1] \cap \N_0$ and $[T_{\min}, T_{\max}] \cap \N_0$, respectively.
	Moreover, if the inequalities \eqref{DI::theo::eq::clock_Lyapunov_LMIs} are feasible, then so are the inequalities \eqref{DI::theo::eq::basic_Lyapunov_LMIs}. Finally, if the performance index $P$ is nonsingular, then the converse of the latter statement holds as well.
\end{theorem}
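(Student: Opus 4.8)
The plan is to prove the three assertions of Theorem~\ref{DI::theo::clock} in turn: the quadratic performance bound by a dissipation argument with a clock-dependent storage, the implication ``\eqref{DI::theo::eq::clock_Lyapunov_LMIs} feasible $\Rightarrow$ \eqref{DI::theo::eq::basic_Lyapunov_LMIs} feasible'' by summing the clock LMIs over one impulse interval, and the converse (under nonsingular $P$) by a forward Riccati recursion. For the performance bound I would use $V(t,x):=x^\top X_{\theta(t)}x$ with the clock $\theta$ from \eqref{DI::eq::clock}. Along any sequence satisfying \eqref{DI::eq::RDT} one has $\theta(t+1)=\theta(t)+1$ at every flow instant, $\theta(t)\in[T_{\min},T_{\max}]$ and $\theta(t+1)=0$ at every impulse instant, and all occurring clock values lie in $[0,T_{\max}]$; hence \eqref{DI::theo::eq::clock_Lyapunov_LMIsb} with $k=\theta(t)$ applies at flow instants and \eqref{DI::theo::eq::clock_Lyapunov_LMIsc} with $k=\theta(t)$ at impulse instants. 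Both, evaluated along a trajectory, give
\[
V(t+1,x(t+1))-V(t,x(t))+(\bullet)^\top P\,\smat{e(t)\\ d(t)}\;\le\;-\eps\big(\|x(t)\|^2+\|d(t)\|^2\big)
\]
with a uniform $\eps>0$ (there are only finitely many LMIs). Telescoping from $t=0$ with $x(0)=0$, using $V\ge 0$ from \eqref{DI::theo::eq::clock_Lyapunov_LMIsa} and letting the horizon tend to infinity, yields the required $\ell_2$-dissipation inequality; taking $d=0$ and using $Q\cge 0$ turns $V$ into a Lyapunov function that contracts by a fixed geometric factor, which is exponential stability (well-posedness being automatic for \eqref{DI::eq::sys_standard}).

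For the implication towards \eqref{DI::theo::eq::basic_Lyapunov_LMIs}, I would set $X:=X_0$ and recognise, for each $k\in[T_{\min},T_{\max}]\cap\N$, the quadratic form defined by the left-hand side of \eqref{DI::theo::eq::basic_Lyapunov_LMIsb} as a sum of left-hand sides of clock LMIs. Writing the free vector as $(x,d_1,\dots,d_{k+1})$ and setting $x_0:=x$, $x_{j+1}:=Ax_j+Bd_{j+1}$, $x_{\mathrm f}:=A_Jx_k+B_Jd_{k+1}$, that form is
\[
\sum_{j=0}^{k-1}\!\Big(x_{j+1}^\top X_{j+1}x_{j+1}-x_j^\top X_jx_j+(\bullet)^\top P\,\smat{Cx_j+Dd_{j+1}\\ d_{j+1}}\Big)+\Big(x_{\mathrm f}^\top X_0x_{\mathrm f}-x_k^\top X_kx_k+(\bullet)^\top P\,\smat{C_Jx_k+D_Jd_{k+1}\\ d_{k+1}}\Big),
\]
because the terms $x_j^\top X_jx_j$ for $1\le j\le k$ telescope and the residual boundary terms $-x^\top X_0x$ and $x_{\mathrm f}^\top X_0x_{\mathrm f}$ match the first block row of \eqref{DI::theo::eq::basic_Lyapunov_LMIsb}. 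Each summand is negative by \eqref{DI::theo::eq::clock_Lyapunov_LMIsb} at the admissible clock value $j\le k-1\le T_{\max}-1$, resp.\ by \eqref{DI::theo::eq::clock_Lyapunov_LMIsc} at $k\in[T_{\min},T_{\max}]$, and vanishes only when its argument $(x_j,d_{j+1})$, resp.\ $(x_k,d_{k+1})$, is zero; this forces all $d_i=0$ and $x_0=0$, so the whole form is negative definite, and together with $X_0\cg 0$ this is \eqref{DI::theo::eq::basic_Lyapunov_LMIs}.

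For the converse under nonsingular $P$, assume $X\cg 0$ satisfies \eqref{DI::theo::eq::basic_Lyapunov_LMIs} for all $k\in[T_{\min},T_{\max}]\cap\N$ and set $X_0:=X$. Eliminating the disturbance in each per-step inequality by a Schur complement (whose regularity is ensured along the construction by the nonsingularity of $P$) recasts \eqref{DI::theo::eq::clock_Lyapunov_LMIsb} as ``$X_j\cg\mathrm{Ric}(X_{j+1})$'' and \eqref{DI::theo::eq::clock_Lyapunov_LMIsc} as ``$X_k\cg\mathrm{Ric}_J(X_0)$'', where $\mathrm{Ric}$ (for one flow step) and $\mathrm{Ric}_J$ (for one jump) are the associated monotone Riccati/cost-to-go operators; the same elimination turns the lifting LMI for index $k$ into exactly $X\cg(\mathrm{Ric}^{\circ k}\!\circ\mathrm{Ric}_J)(X)$. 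Let $\mathrm{fRic}(W)$ denote the largest symmetric $Z$ with $\mathrm{Ric}(Z)\cle W$ --- the maximal solution of the associated Riccati inequality, which exists because $P$ is nonsingular and the lifting LMIs supply a strictly feasible point; $\mathrm{fRic}$ is monotone, and $W\cg\mathrm{Ric}(Y)$ implies $\mathrm{fRic}(W)\cg Y$. I would then put $X_j:=\mathrm{fRic}(X_{j-1}-\eps_jI)$ for $j=1,\dots,T_{\max}$, with the slacks $\eps_j>0$ fixed at the end: then $\mathrm{Ric}(X_j)\cle X_{j-1}-\eps_jI\cl X_{j-1}$, so all flow LMIs \eqref{DI::theo::eq::clock_Lyapunov_LMIsb} hold strictly, and by continuity of $\mathrm{fRic}$ the $X_j$ can be kept close to $\mathrm{fRic}^{\circ j}(X)$ and positive definite (using the lifting LMIs together with $Q\cge 0$, which make $\mathrm{Ric},\mathrm{Ric}_J$ map positive semidefinite matrices into positive semidefinite ones).

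The step I expect to be the main obstacle is to secure the jump LMIs \eqref{DI::theo::eq::clock_Lyapunov_LMIsc}, i.e.\ $X_j\cg\mathrm{Ric}_J(X_0)=\mathrm{Ric}_J(X)$ for every $j\in[T_{\min},T_{\max}]$. Here I would invoke the lifting LMI for the \emph{matching} index $k=j$: applying $\mathrm{fRic}$ to $X\cg\mathrm{Ric}^{\circ j}(\mathrm{Ric}_J(X))$ a total of $j$ times --- legitimate since $\mathrm{fRic}$ is monotone and $W\cg\mathrm{Ric}(Y)$ yields $\mathrm{fRic}(W)\cg Y$, peeling off one $\mathrm{Ric}$ per application --- gives $\mathrm{fRic}^{\circ j}(X)\cg\mathrm{Ric}_J(X)$, whence $X_j\cg\mathrm{Ric}_J(X)$ once $\eps_1,\dots,\eps_j$ are small enough. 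This is precisely why feasibility of \eqref{DI::theo::eq::basic_Lyapunov_LMIs} is needed for \emph{all} $k\in[T_{\min},T_{\max}]$ and not only for $k=T_{\max}$. The remaining points to be checked are that the recursion stays within the domain of $\mathrm{fRic}$ at each step (again a consequence of the lifting LMIs and $Q\cge 0$) and that the finitely many smallness conditions on the $\eps_j$ are mutually compatible, which they are by finiteness. Combining this with the already-established implication shows that, for nonsingular $P$, feasibility of \eqref{DI::theo::eq::clock_Lyapunov_LMIs} and of \eqref{DI::theo::eq::basic_Lyapunov_LMIs} are equivalent.
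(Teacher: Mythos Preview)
Your arguments for the first two statements are essentially the paper's own: a clock-dependent storage $x^\top X_{\theta(t)}x$ telescoped along trajectories, and the recognition of \eqref{DI::theo::eq::basic_Lyapunov_LMIsb} with $X:=X_0$ as the telescoping sum of the per-step clock LMIs. The paper writes the latter more explicitly through auxiliary outer factors $U_l$, but the content is identical.

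For the third statement your route genuinely differs from the paper's. You stay in the primal and invoke an abstract ``forward Riccati inverse'' $\mathrm{fRic}$, defined as the Loewner-maximal $Z$ with $\mathrm{Ric}(Z)\preceq W$, and then peel off flow steps from the lifted inequality. The paper instead applies the dualization lemma to \eqref{DI::theo::eq::basic_Lyapunov_LMIsb} (this is exactly where nonsingularity of $P$ together with $Q\succeq 0$ enter), passes to $Y_0:=X^{-1}$, $\tilde P:=P^{-1}$, and then constructs $Y_1,\dots$ by an \emph{explicit} dual Riccati formula $Y_1:=AY_0A^\top-\tilde B\tilde P\tilde B^\top+(\cdot)(\cdot)^{-1}(\cdot)^\top+\eps I$; the dual flow LMI holds by construction, the dual jump LMI by comparison with the dualized lifting LMI for small $\eps$, and a second dualization returns the primal clock LMIs. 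In effect the paper's dual recursion \emph{is} your $\mathrm{fRic}$, but realised as a closed formula rather than as a supremum.

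What the paper's detour through the dual buys is precisely the rigor you defer. In your outline you assert (i) that $\mathrm{fRic}(W)$ exists as a maximal element, (ii) that it is monotone, and (iii) that the recursion stays in the domain where the Schur-complement defining $\mathrm{Ric}$ is valid (i.e.\ where $B^\top Z B+(\bullet)^\top P\smat{D\\ I}\prec 0$). None of these is automatic: the feasible set $\{Z:\mathrm{Ric}(Z)\preceq W\}$ need not be Loewner-bounded above, and monotonicity of a supremum over a non-totally-ordered set requires that the supremum is actually attained and comparable. The dualization resolves all three at once: the dual recursion is an explicit map (no maximal-solution argument needed), the $(2,2)$-block invertibility in the dual is read off directly from the dualized lifting LMI, and positive definiteness of $Y_1$ (hence of $X_1=Y_1^{-1}$) is immediate from the displayed formula. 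If you want to keep your primal viewpoint, you would need to supply exactly this existence/monotonicity/domain argument for $\mathrm{fRic}$---which, once unwound, reproduces the paper's dual computation.
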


Most of the detailed discussion and the available extensions for Theorem 2.2 in \cite{Bri13} as provided, e.g., in  \cite{Bri13, Bri17, Hol22} remain valid for Theorem \ref{DI::theo::clock} and are not repeated here.
We only emphasize the following points that are relevant in the context of this paper.

\begin{remark}
	
	\begin{enumerate}[(a)]
		\item If compared to Theorem 2.2 in \cite{Bri13}, Theorem \ref{DI::theo::clock} does not involve differential linear matrix inequalities and, hence, does not require the application of relaxations
		(such as those based on linear splines \cite{AllSha10}) in order to arrive at numerically tractable stability criteria. Instead, the inequalities \eqref{DI::theo::eq::clock_Lyapunov_LMIs} can be solved based on any SDP solver right away.
		\item The specialization of Theorem \ref{DI::theo::clock} to a test for stability is obtained by cancelling the last block columns, rows in \eqref{DI::theo::eq::clock_Lyapunov_LMIsb} and  \eqref{DI::theo::eq::clock_Lyapunov_LMIsc} and by making use of $Q \cge 0$. Explicitly, the resulting stability criteria read as
		\begin{equation*}
			X_k \cg 0,\quad
			(\bullet)^\top \mat{cc}{X_{k+1} & 0 \\ 0 & -X_k}\mat{c}{A \\ I} \cl 0
			\teq{ and }
			(\bullet)^\top \mat{cc}{X_0 & 0 \\ 0 & -X_k}\mat{c}{A_J \\ I} \cl 0
		\end{equation*}
		for all indices $k$ contained in $[0, T_{\max}] \cap \N_0$, $[0, T_{\max}-1] \cap \N_0$ and $[T_{\min}, T_{\max}] \cap \N_0$, respectively.
		\item If the two positivity constraints \eqref{DI::theo::eq::clock_Lyapunov_LMIsa} and $Q \cge 0$ are dropped, one can no longer conclude stability and quadratic performance of the system \eqref{DI::eq::sys_standard} based on Theorem \ref{DI::theo::clock}. However, the feasibility of the remaining inequalities \eqref{DI::theo::eq::clock_Lyapunov_LMIsb} and  \eqref{DI::theo::eq::clock_Lyapunov_LMIsc} still implies that the strict \emph{dissipation inequality}
		\begin{equation*}
			x(k+1)^\top X_{\theta(k+1)}x(k+1) - x(0)^\top X_{\theta(0)}x(0) + \sum_{t=0}^{k}(\bullet)^\top P\mat{c}{e(t) \\ d(t)} \leq - \eps \sum_{t = 0}^k \|d(t)\|^2 \text{ ~~holds for all~~ }k\in \N_0
		\end{equation*}
		and for any trajectory of the impulsive system \eqref{DI::eq::sys_standard}. Such inequalities as introduced by Willems in \cite{Wil72a} play a fundamental role in control and will reappear in the next subsection.
		\item Similarly as shown in \cite{Hol22}, Theorem \ref{DI::theo::clock} can be generalized to a quadratic performance test for impulsive systems affected by various types of uncertainties, such as parametric, dynamic or sector bounded nonlinear ones and uncertain delays. This is achieved by a dedicated and clock-based modification of the IQC theorem from \cite{SchVee18}.
		\item Based on Theorem \ref{DI::theo::clock} and due to the affine dependence of the involved LMIs on the underlying system's describing matrices, one can systematically solve a large number of controller design problems beyond finding stabilizing static state-feedback gains \cite{Hol22}.
	\end{enumerate}
	\label{DI::rema::clock}
\end{remark}

Despite of all the favorable properties of the clock-based approach, it has also some downsides.

\begin{itemize}
	\item IQC techniques rely on multipliers for capturing the behavior of uncertainties affecting the underlying system and any such multiplier typically involves several decision variables \cite{MegRan97}. If compared to standard IQC stability tests, several of the extensions of Theorem \ref{DI::theo::clock} as given in \cite{Hol22} require $T_{\max}$ times the number multipliers and, hence, decision variables involved in the prior test. Especially for systems involving a large number of uncertainties, this can become problematic.
	\item Most synthesis approaches covered in \cite{Hol22} involve controllers that require access to the sequence of impulse instants in a causal fashion, which is a natural consequence of using clock-dependent Lyapunov functions as in the proof of Theorem \ref{DI::theo::clock}. This information can, however, be difficult to provide in practice and designing controllers that don't require such knowledge is challenging.
\end{itemize}

To illustrate the latter issue, think of the problem of designing a non-impulsive static state-feedback controller $K$ for an impulsive open-loop plant based on Theorem \ref{DI::theo::clock}. This is not possible directly since products of the form $X_1(A + B_uK), \dots, X_{T_{\max}}(A + B_uK)$ appear and since the standard congruence transformation with $X_k^{-1}$ does not help.
This trouble can be ``resolved'' by enforcing, e.g., $X_0 = \dots = X_{T_{\max}}$ in Theorem \ref{DI::theo::clock}, but this additional constraint typically introduces severe conservatism.
It is often much more reasonable to introduce and enforce suitable constraints on so-called slack variables as proposed in \cite{OliBer99, EbiPea15}.
This leads to the following result whose proof can be extracted from \cite[Theorem 2.12]{Hol22}.

\begin{theorem}[Clock and Slack Variable Based Quadratic Performance Test]
	\label{DI::theo::clock_sv}
	The impulsive system \eqref{DI::eq::sys_standard} achieves quadratic performance with index $P$ for all $(t_k)_{\N_0}$ satisfying \eqref{DI::eq::RDT} if there exist $X_0, \dots, X_{T_{\max}} \in \S^n$ and $G, G_J \in \R^{n\times n}$ satisfying
	\begin{subequations}
		\label{DI::theo::eq::clockSV_Lyapunov_LMIs}
		\begin{equation}
			X_k \cg 0,
		\end{equation}
		\begin{equation}
			\arraycolsep=2pt
			\dlabel{DI::theo::eq::clockSV_Lyapunov_LMIsb}{DI::theo::eq::clockSV_Lyapunov_LMIsc}
			(\bullet)^\top \! \mat{ccc|c}{0 & G & 0 \\ G^\top  & X_{k+1}\!-\! G\!-\!G^\top  & 0 \\ 0 & 0 & -X_k   \\\hline &&& P}\hspace{-1ex}\mat{ccc}{0 & A & B \\ I & 0 & 0 \\ 0 & I & 0 \\  \hline 0 & C & D \\ 0 & 0 & I} \!\cl\! 0
			\text{ and }
			(\bullet)^\top\!  \mat{ccc|c}{0 & G_J & 0 \\ G_J^\top  & X_0\!-\! G_J\!-\!G_J^\top  & 0 \\ 0 & 0 & -X_k   \\\hline &&& P}\hspace{-1ex}\mat{ccc}{0 & A_J & B_J \\ I & 0 & 0 \\ 0 & I & 0 \\  \hline 0 & C_J & D_J \\ 0 & 0 & I} \!\cl\! 0
		\end{equation}
	\end{subequations}
	for all indices $k$ contained in $[0, T_{\max}] \cap \N_0$, $[0, T_{\max}-1] \cap \N_0$ and $[T_{\min}, T_{\max}] \cap \N_0$, respectively.
\end{theorem}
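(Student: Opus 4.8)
The statement claims that the clock-and-slack-variable LMIs \eqref{DI::theo::eq::clockSV_Lyapunov_LMIs} are sufficient for quadratic performance. Since the text says the proof can be extracted from \cite[Theorem 2.12]{Hol22}, the natural route is to show that feasibility of \eqref{DI::theo::eq::clockSV_Lyapunov_LMIs} implies feasibility of the pure clock-based LMIs \eqref{DI::theo::eq::clock_Lyapunov_LMIs}, and then invoke Theorem \ref{DI::theo::clock}. The key elementary fact is the standard slack-variable (Finsler / projection) argument: for a symmetric matrix $X_{k+1} \cg 0$ one has, for any $G$, the matrix inequality
\begin{equation*}
	\mat{cc}{0 & G \\ G^\top & X_{k+1} - G - G^\top} \cle \mat{cc}{-X_{k+1} & 0 \\ 0 & 0},
\end{equation*}
which follows from $0 \cle (X_{k+1} - G)^\top X_{k+1}^{-1} (X_{k+1}-G) = X_{k+1} - G - G^\top + G^\top X_{k+1}^{-1} G$ after a congruence. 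Plugging this into \eqref{DI::theo::eq::clockSV_Lyapunov_LMIsb} and \eqref{DI::theo::eq::clockSV_Lyapunov_LMIsc} and then left/right multiplying by $\smat{0 & A & B \\ I & 0 & 0 \\ 0 & I & 0 \\ 0 & C & D \\ 0 & 0 & I}$ and its transpose collapses the first two block rows/columns and reproduces exactly the clock inequalities with $X_{k+1}$ (respectively $X_0$) in the upper-left block. This direction is routine.

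First I would make the congruence/Schur step above precise, being careful that $G$ need not be symmetric or invertible — the inequality only uses $X_{k+1} \cg 0$, which is guaranteed by the positivity constraint. Second, I would record that the transformation $v \mapsto \smat{0 & A & B \\ I & 0 & 0 \\ 0 & I & 0 \\ 0 & C & D \\ 0 & 0 & I} v$ is exactly the map that, applied to \eqref{DI::theo::eq::clockSV_Lyapunov_LMIsb}, yields the flow inequality $(\bullet)^\top \smat{X_{k+1} & 0 & \\ 0 & -X_k & \\ \hline & & P} \smat{A & B \\ I & 0 \\ \hline C & D \\ 0 & I} \cl 0$ of \eqref{DI::theo::eq::clock_Lyapunov_LMIsb}, and similarly for the jump inequality; strictness is preserved because the slack inequality is strict. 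Third, since the positivity constraints $X_k \cg 0$ are common to both sets of LMIs, feasibility of \eqref{DI::theo::eq::clockSV_Lyapunov_LMIs} gives feasibility of \eqref{DI::theo::eq::clock_Lyapunov_LMIs} with the \emph{same} $X_0, \dots, X_{T_{\max}}$, and Theorem \ref{DI::theo::clock} then yields quadratic performance with index $P$ for all sequences satisfying \eqref{DI::eq::RDT}.

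Alternatively — and this is closer to the spirit of slack-variable arguments and to \cite{Hol22,OliBer99} — one can argue directly with a dissipation inequality: define the clock-dependent storage function $x \mapsto x^\top X_{\theta(t)} x$ using the clock \eqref{DI::eq::clock}, and show that \eqref{DI::theo::eq::clockSV_Lyapunov_LMIsb}–\eqref{DI::theo::eq::clockSV_Lyapunov_LMIsc}, after elimination of the slack variables exactly as above, give a strict supply-rate inequality along every trajectory of \eqref{DI::eq::sys_standard}; summing over $t$ and using $X_k \cg 0$ together with $Q \cge 0$ (to handle the initial condition and to conclude $\ell_2$-gain type bounds) produces the required estimate $\sum_t (\bullet)^\top P \smat{e(t) \\ d(t)} \le -\eps \|d\|_{\ell_2}^2$ and exponential stability for $d = 0$. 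Either way the substantive content is the same single matrix manipulation.

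The main obstacle — really the only non-cosmetic point — is being careful that the slack-variable relaxation is \emph{exactly} lossless here: one must verify that no generality is lost relative to the clock test, i.e. that the elimination does not merely imply \eqref{DI::theo::eq::clock_Lyapunov_LMIs} but could in principle be tighter. This is precisely why the reduction-to-Theorem-\ref{DI::theo::clock} route is the safe one: it only needs the one-sided implication (slack feasible $\Rightarrow$ clock feasible), which is all that a sufficiency statement requires, and it sidesteps any claim about equivalence. I would therefore present the short congruence argument, push it through the outer factor, and close by citing Theorem \ref{DI::theo::clock}.
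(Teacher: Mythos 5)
Your overall strategy---eliminate the slack variables to recover the clock LMIs \eqref{DI::theo::eq::clock_Lyapunov_LMIs} with the same $X_0,\dots,X_{T_{\max}}$ and then invoke Theorem \ref{DI::theo::clock}---is the right one; it matches the paper's remark that the test coincides with Theorem \ref{DI::theo::clock} once $G$, $G_J$ are allowed to depend on $k$ (the paper itself gives no proof and delegates to the cited reference). However, the one concrete piece of mathematics you write down is false. The claimed inequality
\begin{equation*}
	\mat{cc}{0 & G \\ G^\top & X_{k+1} - G - G^\top} \cle \mat{cc}{-X_{k+1} & 0 \\ 0 & 0}
\end{equation*}
cannot hold, since the difference of the two sides has $-X_{k+1} \cl 0$ in its $(1,1)$ block. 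It also points in the wrong direction for your purpose: in \eqref{DI::theo::eq::clockSV_Lyapunov_LMIsb} the first block row of the outer factor produces $x(t+1)=Av_2+Bv_3$, so substituting $\smat{-X_{k+1} & 0 \\ 0 & 0}$ there would generate $-x(t+1)^\top X_{k+1}x(t+1)$, whereas \eqref{DI::theo::eq::clock_Lyapunov_LMIsb} requires $+x(t+1)^\top X_{k+1}x(t+1)$. The identity $0\cle (X_{k+1}-G)^\top X_{k+1}^{-1}(X_{k+1}-G)$ that you quote yields $X_{k+1}-G-G^\top \cge -G^\top X_{k+1}^{-1}G$, which is the correct ingredient for the classical slack-variable argument but does not imply your displayed block inequality. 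Moreover, \eqref{DI::theo::eq::clockSV_Lyapunov_LMIsb} already has the $5\times 3$ block matrix you mention as its outer factor, so ``left/right multiplying by it'' is not a well-defined operation on that LMI.

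The correct elimination is in fact simpler. View \eqref{DI::theo::eq::clockSV_Lyapunov_LMIsb} as a quadratic form in $(v_1,v_2,v_3)$; the slack variable $v_1$ enters only through $2(Av_2+Bv_3)^\top G\,v_1 + v_1^\top(X_{k+1}-G-G^\top)v_1$. Restricting to $v_1=Av_2+Bv_3$, i.e., multiplying the resulting $3\times 3$ block quadratic form from the right by the full-column-rank injection $\smat{A & B \\ I & 0 \\ 0 & I}$ and from the left by its transpose, turns this expression into $(Av_2+Bv_3)^\top X_{k+1}(Av_2+Bv_3)$ because $2a^\top Ga + a^\top(X_{k+1}-G-G^\top)a = a^\top X_{k+1}a$ holds identically; the remaining terms are unaffected and one obtains exactly \eqref{DI::theo::eq::clock_Lyapunov_LMIsb}, with strictness preserved since $(v_2,v_3)\neq 0$ forces the image to be nonzero. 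The analogous substitution with $A_J$, $B_J$, $G_J$, $X_0$ gives \eqref{DI::theo::eq::clock_Lyapunov_LMIsc}, and Theorem \ref{DI::theo::clock} then finishes the proof. (Alternatively, one may use $X_{k+1}-G-G^\top\cge -G^\top X_{k+1}^{-1}G$ together with a congruence by $\diag(G^{-1},I,I)$---noting that the $(1,1)$ block of the LMI gives $G+G^\top\cg X_{k+1}\cg 0$, hence $G$ is invertible---followed by a Schur complement; the direct restriction is shorter.) Your dissipation-inequality alternative and the remarks on losslessness are unobjectionable but are not needed for a pure sufficiency statement.
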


Let us stress that the latter performance test is in general more conservative than the one in Theorem~\ref{DI::theo::clock} and identical to this test if we allow the matrices $G$, $G_J$ to depend on the index $k$.


In the sequel, we analyze the interconnection \eqref{DI::eq::sys} by IQC techniques and by viewing the impulsive operator as an uncertainty. We will  show that this approach permits us to avoid using the clock at some critical spots, which can lead to a reduced computational burden.

\subsection{A Variant of the IQC Theorem}
\label{DI::sec::Recap_IQCs}


\begin{figure}
\begin{minipage}{0.49\textwidth}
	\begin{center}
		\includegraphics{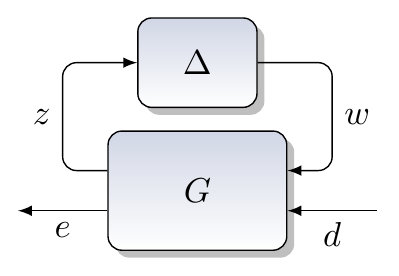}
	\end{center}
\end{minipage}
\begin{minipage}{0.49\textwidth}
	\begin{center}
		\includegraphics{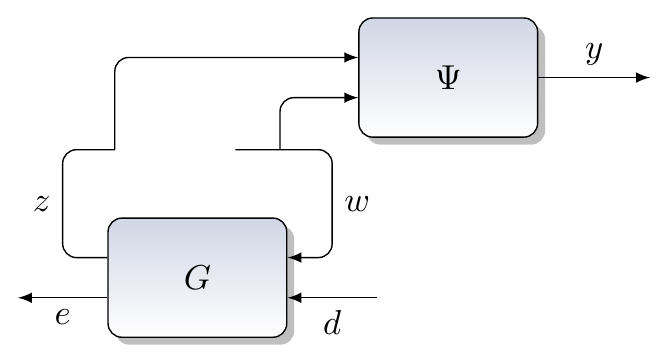}
	\end{center}
\end{minipage}
\caption{Left: Block diagram of an uncertain feedback interconnection \eqref{DI::eq::sys_intercon} with known linear part $G$ and uncertain part $\Delta$. Right: Block diagramm of the system \eqref{DI::eq::sys_augmented} which is the interconnection's linear part $G$ augmented by the filter $\Psi$ in \eqref{DI::eq::filter}.}
\label{DI::fig::block_unc}
\end{figure}

Our quadratic performance tests are based on a variation of the IQC theorem in \cite{SchVee18,Sch21}, which even applies to interconnections \eqref{DI::eq::sys_intercon} involving much more general uncertainties $\Del:\ell_{2e}^{n_z} \to \ell_{2e}^{n_w}$ than the impulsive operator in \eqref{DI::eq::sys::del}. A block diagram of such an interconnection is depicted on the left in Fig.~\ref{DI::fig::block_unc}, where $G$ denotes the interconnection's known linear part.
As the classical variants (see e.g. \cite{MegRan97,VeeSch16}), this IQC theorem involves dynamic multipliers that are described as $\Pi = \Psi^\ast M \Psi$ with a fixed stable dynamic outer factor $\Psi \in \rhi^{m\times (n_z+n_w)}$ and a real middle matrix $M \in \S^m$ which will serve as a variable and is subject to suitable constraints.
In any IQC theorem, the general idea is to find exploitable quadratic constraints expressed in terms of such multipliers $\Pi$, which are enforced on the interconnection signals $z$ and $w$ through the uncertain operator $\Del$.
%

In order to state the IQC theorem, we require some state-space description of the filter $\Psi$ and of the augmented (or filtered) system as depicted on the right in Fig.~\ref{DI::fig::block_unc}. To this end, let us suppose that the output $y$ of $\Psi$ in response to the input $u\in \ell_{2e}^{n_z+n_w}$ is given by
\begin{equation}
\mat{c}{\xi(t+1) \\ y(t)} = \mat{cc}{A_\Psi & B_\Psi \\ C_\Psi & D_\Psi} \mat{c}{\xi(t) \\ u(t)}
\label{DI::eq::filter}
\end{equation}
for $t\in \N_0$ and with a zero initial condition, i.e., $\xi(0) = 0 \in \R^{n_\xi}$. Then
the augmented system in Fig.~\ref{DI::fig::block_unc} admits the description
\begin{equation}
\label{DI::eq::sys_augmented}
\mat{c}{\xi(t+1) \\ x(t+1) \\ \hline y(t)\\ \hdashline e(t)} = \mat{cc|c:c}{A_\Psi & B_\Psi \smat{C_z \\ 0} & B_\Psi \smat{D_{zw} \\ I_{n_w}} & B_\Psi \smat{D_{zd} \\ 0}\\ 0 & A & B_w & B \\\hline C_\Psi & D_\Psi \smat{C_z \\ 0} & D_\Psi \smat{D_{zw} \\ I_{n_w}} & D_\Psi \smat{D_{zd} \\ 0} \\ \hdashline
	0 & C & D_{ew} & D}
\mat{c}{\xi(t) \\ x(t) \\ \hline w(t) \\ \hdashline d(t)}
\end{equation}
for $t\in \N_0$. Our variant of the IQC theorem reads as follows and is proved in the appendix.

\begin{theorem}[IQC Theorem]
\label{DI::theo::IQC}
Suppose that the interconnection \eqref{DI::eq::sys_intercon} is well-posed and that $\Del:\ell_{2e}^{n_z} \to \ell_{2e}^{n_w}$ satisfies an \emph{IQC with terminal cost} with respect to the filter \eqref{DI::eq::filter}, the matrix $M \in \S^m$, the map $Z:\N \to \S^{n_\xi}$, and the strictly increasing sequence of integers $(t_k)_{k \in \N_0}$ with $t_0 = -1$. The latter means that the inequality
\begin{equation}
	\xi(t_{k}+1)^\top Z(k)\xi(t_k+1)  + \sum_{t = 0}^{t_k}y(t)^\top M y(t) \geq 0
	\teq{ for all }k \in \N
	\tag{IQC}
	\label{DI::eq::pointwise_IQC}
\end{equation}
holds for any state and output trajectory of the filter \eqref{DI::eq::filter} driven by the input $u = \smat{z \\ \Del(z)}$ with any $z \in \ell_{2e}^{n_z}$.
Then the interconnection \eqref{DI::eq::sys_intercon} is stable in the sense that its state satisfies $x \in \ell_2^n$ for any initial condition $x(0) \in \R^n$ and it achieves quadratic performance with index $P$ if there exists a matrix $X \in \S^{n_\xi+n}$ satisfying
\begin{subequations}
	\label{DI::theo::eq::main}
	\begin{equation}
		\dlabel{DI::theo::eq::maina}{DI::theo::eq::mainb}
		(\bullet)^\top \mat{cc|c:c}{X & 0 & &\\ 0 & -X & &\\ \hline &&  M& \\ \hdashline &&& P}
		\mat{cc|c:c}{A_\Psi & B_\Psi \smat{C_z \\ 0} & B_\Psi \smat{D_{zw} \\ I_{n_w}} & B_\Psi \smat{D_{zd} \\ 0} \\ 0 & A & B_w & B \\ I& 0 & 0 & 0\\ 0& I & 0 & 0\\ \hline C_\Psi & D_\Psi \smat{C_z \\ 0} & D_\Psi \smat{D_{zw} \\ I_{n_w}} & D_\Psi \smat{D_{zd} \\ 0}\\ \hdashline 0 & C & D_{ew} & D \\ 0 & 0 & 0 & I} \cl 0
		\teq{ and }
		X - \mat{cc}{Z(k) & 0 \\ 0 & 0}\cg 0
	\end{equation}
\end{subequations}
for all $k \in \N$.
\end{theorem}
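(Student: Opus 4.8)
The plan is to convert \eqref{DI::theo::eq::main} together with \eqref{DI::eq::pointwise_IQC} into a finite-horizon dissipation inequality on each interval $[0,t_k]$ and then to let $k\to\infty$, using that $(t_k)$ is strictly increasing, hence unbounded. To prepare this, I fix any $x(0)\in\R^n$ and $d\in\ell_{2e}^{n_d}$; well-posedness yields the causal response $(x,z,w,e)$ of \eqref{DI::eq::sys_intercon}, and feeding $u=\smat{z\\w}$ into the filter \eqref{DI::eq::filter} with $\xi(0)=0$ gives $(\xi,y)$ so that, with $\eta:=\col(\xi,x)$, the signals $\eta,w,d$ and $y,e$ are related exactly by the coefficient matrices of \eqref{DI::eq::sys_augmented}; in particular the $\xi$-block of $\eta$ equals the filter state in \eqref{DI::eq::pointwise_IQC}. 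Since \eqref{DI::theo::eq::mainb} is strict, there is $\eps>0$ such that evaluating the associated quadratic form along the trajectory gives, for every $t\in\N_0$,
\[ \eta(t+1)^\top X\eta(t+1)-\eta(t)^\top X\eta(t)+y(t)^\top M y(t)+(\bullet)^\top P\smat{e(t)\\d(t)}\le -\eps\|\om(t)\|^2,\quad \om(t):=\col(\eta(t),w(t),d(t)). \]

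I then sum this from $t=0$ to $t=t_k$; the $X$-terms telescope, and inserting \eqref{DI::eq::pointwise_IQC} in the form $\sum_{t=0}^{t_k}y(t)^\top M y(t)\ge-\eta(t_k+1)^\top\diag(Z(k),0)\eta(t_k+1)$ and writing $S_k:=\sum_{t=0}^{t_k}\|\om(t)\|^2$ yields
\[ \eta(t_k+1)^\top\big(X-\diag(Z(k),0)\big)\eta(t_k+1)+\eps S_k+\sum_{t=0}^{t_k}(\bullet)^\top P\smat{e(t)\\d(t)}\le \eta(0)^\top X\eta(0)\qquad(k\in\N), \]
where the first summand on the left is nonnegative by \eqref{DI::theo::eq::maina}. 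For the stability claim I put $d=0$: the right-hand side depends only on $x(0)$, the performance term reduces to $\sum_{t=0}^{t_k}e(t)^\top Q e(t)\ge0$ since $Q\cge0$, so discarding these nonnegative terms leaves $\eps S_k\le\eta(0)^\top X\eta(0)$ for all $k$; as $t_k\to\infty$ the monotone partial sums $S_k$ are bounded, whence $\eta\in\ell_2$ and in particular $x\in\ell_2^n$.

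For the performance claim I put $x(0)=0$, so $\eta(0)=0$ and $\sum_{t=0}^{t_k}(\bullet)^\top P\smat{e\\d}\le-\eps S_k$. Because $Q\cge0$, completing the square in the cross term gives $-(\bullet)^\top P\smat{e(t)\\d(t)}\le\delta\|e(t)\|^2+c_\delta\|d(t)\|^2$ for every $\delta>0$, and the output equation for $e$ in \eqref{DI::eq::sys_augmented} gives $\|e(t)\|^2\le c\|\om(t)\|^2$; picking $\delta$ with $\delta c<\eps$ then yields $S_k\le C\|d\|_{\ell_2}^2$ uniformly in $k$. Letting $k\to\infty$ shows $\eta,w\in\ell_2$, hence $e\in\ell_2$, so $\sum_{t=0}^\infty(\bullet)^\top P\smat{e\\d}$ converges, and the same limit in the displayed inequality gives $\sum_{t=0}^\infty(\bullet)^\top P\smat{e\\d}\le-\eps\|d\|_{\ell_2}^2$, the asserted quadratic performance.

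The state-space bookkeeping around \eqref{DI::eq::sys_augmented} and the extraction of $\eps$ from the strict LMI are routine. The delicate step is the last one: since $P$ is indefinite (only $Q\cge0$ is known), the performance term cannot be bounded below by a multiple of $\|d\|^2$, so one must first use the coercive term $-\eps S_k$ to force all internal signals into $\ell_2$ before reading off the infinite-horizon bound, and this is exactly where the unboundedness of $(t_k)$ and the terminal-cost form of the IQC enter.
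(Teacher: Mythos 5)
Your proposal is correct and follows essentially the same route as the paper's proof: perturb the strict LMI to extract $\eps$, telescope the resulting dissipation inequality over $[0,t_k]$, insert the terminal-cost IQC, and drop the terminal term via the coupling condition $X-\diag(Z(k),0)\cg 0$ (which, as a small slip, you cite as the LMI rather than the positivity condition). Your treatment of the final limit in the performance step — first forcing all internal signals into $\ell_2$ so that the series genuinely converges before passing to the limit along $(t_k)$ — is in fact more careful than the paper, which simply takes the limit of the partial sums.
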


Let us emphasize that this is rather straightforward 
modification of the continuous-time IQC theorem proposed in \cite{SchVee18,Sch21}. Indeed, the discrete-time version of these previous results are recovered by restricting $Z$ to constant and by choosing $t_k := k-1$ for all $k \in \N_0$.
Moreover, let us stress that a detailed discussion involving an analysis of links with classical IQCs as proposed in \cite{MegRan97} and the corresponding stability tests can be found in these papers as well.
We only recall that, by $\xi(0) = 0$, the inequality \eqref{DI::eq::pointwise_IQC} constitutes a discrete-time non-strict \emph{dissipation inequality} \cite{Wil72a}.
Here, the matrix $M$ defines the supply rate and $Z$ characterizes a partial storage function that only involves the filter's states but does not rely on any information about ``internal" properties or quantities of the uncertainty $\Delta$. The term $\xi(t_k+1)^\top Z(k) \xi(t_k+1)$ can also be viewed as a terminal cost which justifies the naming.

In contrast to the notion of IQCs in \cite{Hol22}, we choose constant matrices $X$ and $M$ such that Theorem~\ref{DI::theo::IQC} is closer to the IQC theorem proposed in \cite{SchVee18,Sch21}. One could as well pick $k$-dependent matrices which might lead to less conservatism, but the improvements are expected to be disproportionate to the resulting increased computational burden. 

Finally, note that Theorem \ref{DI::theo::IQC} involves a rather weak notion of stability, namely global attractivity, because it only requires very mild assumptions on the considered operator $\Del$. This permits its application even if $\Del$ consists of an impulsive operator \eqref{DI::eq::sys::del} and, at the same time, of numerous other types of uncertainties as mentioned earlier.
If $\Del$ is identical to the impulsive operator in \eqref{DI::eq::sys::del}, then one can recover exponential stability as defined in Definition \ref{DI::def::stab} in all of the upcoming tests. Indeed, this is seen with some minor changes of the arguments and by showing that the impulsive operator even satisfies,
for some $\t \rho \in (0, 1)$,
the somewhat stronger dissipation-like inequality
\begin{equation*}
(\bullet)^\top \mat{cc}{Z(k+1) & 0 \\ 0 & -\rho Z(k)}\mat{c}{\xi(t_{k+1}+1) \\ \xi(t_k+1)} + \sum_{t = t_k+1}^{t_{k+1}}y(t)^\top My(t) \geq 0
\quad\text{ for all }k \in \N_0
\text{ and all }\rho \in (\t \rho, 1).
\end{equation*}



\subsection{New IQC Based Stability Tests for the Interconnection \eqref{DI::eq::sys}} \label{DI::sec::main_ana}

After these preparation, we are ready to formulate
the main contribution of this paper's analysis part, namely the construction of suitable IQCs with terminal cost for the particular impulsive operator $\Del$ in \eqref{DI::eq::sys::del}. This serves
the purpose of deriving new quadratic performance tests for the interconnection \eqref{DI::eq::sys}
based on the IQC theorem \ref{DI::theo::IQC}.
%
%
%
%
To this end, we begin by providing an IQC for such an impulsive operator corresponding to a sequence of impulse instants with \eqref{DI::eq::ADT}. Since, in this case, there is only limited information that can be taken into account, this will yield the simplest quadratic performance test. Our first result is mainly motivated by its straightforward proof and involves a trivial static filter $\Psi$.

\begin{lemma}
\label{DI::lem::pIQC_ADT}
Let $(t_k)_{k\in \N_0}$ be a sequence of impulse instants with \eqref{DI::eq::ADT}. Then the corresponding operator $\Del$ in \eqref{DI::eq::sys::del} satisfies \eqref{DI::eq::pointwise_IQC} for the static filter $\Psi := I_{n_z+n_w}$, for the sequence $(t_k)_{k \in \N_0}$ and for any matrix $M \in \S^{n_z+n_w}$ with
\begin{equation}
	\mat{c}{I \\ 0}^\top M \mat{c}{I \\ 0} \cge 0 \teq{ and }
	\mat{c}{I \\ I}^\top M \mat{c}{I \\ I} \cge 0.
	\label{DI::lem::eq::static_multiplier}
\end{equation}
\end{lemma}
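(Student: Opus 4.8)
The plan is to exploit that the filter $\Psi := I_{n_z+n_w}$ is static, so it has no states ($n_\xi = 0$), the map $Z$ is vacuous, and the terminal-cost term $\xi(t_k+1)^\top Z(k)\xi(t_k+1)$ in \eqref{DI::eq::pointwise_IQC} simply disappears. Hence the claim reduces to verifying
\begin{equation*}
	\sum_{t=0}^{t_k} y(t)^\top M y(t) \geq 0 \teq{ for all } k \in \N,
\end{equation*}
where $y$ is the output of $\Psi$ driven by $u = \smat{z \\ \Del(z)}$. Since $\Psi$ is the identity, we have $y(t) = \smat{z(t) \\ w(t)}$ with $w := \Del(z)$, so what must be bounded from below is $\sum_{t=0}^{t_k} (\bullet)^\top M \smat{z(t) \\ w(t)}$.

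First I would split this finite sum according to whether the time index $t$ is an impulse instant or not. Recalling that $t_0 = -1 < 0$ and that the sequence is strictly increasing, the impulse instants lying in $\{0, 1, \dots, t_k\}$ are exactly $t_1, \dots, t_k$ (and $t_k \geq 0$ for $k \in \N$, so the range is nonempty). For an index $t \in \{0, \dots, t_k\}$ that is not among $t_1, \dots, t_k$, the definition \eqref{DI::eq::sys::del} gives $w(t) = 0$, so the corresponding summand equals $z(t)^\top \big(\smat{I \\ 0}^\top M \smat{I \\ 0}\big) z(t)$, which is nonnegative by the first inequality in \eqref{DI::lem::eq::static_multiplier}. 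For $t = t_j$ with $j \in \{1, \dots, k\}$, we have $w(t_j) = z(t_j)$, so that summand equals $z(t_j)^\top \big(\smat{I \\ I}^\top M \smat{I \\ I}\big) z(t_j)$, which is nonnegative by the second inequality in \eqref{DI::lem::eq::static_multiplier}.

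Summing these nonnegative contributions then yields the desired inequality for every $k \in \N$, which is precisely \eqref{DI::eq::pointwise_IQC} for this static filter. I do not anticipate any genuine obstacle: the proof is elementary, and the only points needing care are the trivial bookkeeping of which time indices in the summation range are impulse instants (in particular that $t_0 = -1$ falls outside that range) and the observation that a static filter contributes no state and hence no terminal cost, so that \eqref{DI::eq::pointwise_IQC} collapses to a pointwise-in-$t$ sign condition on $M$ evaluated along $\smat{z(t) \\ w(t)}$.
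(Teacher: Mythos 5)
Your proposal is correct and follows essentially the same route as the paper's proof: both establish pointwise nonnegativity of $y(t)^\top M y(t)$ by distinguishing impulse instants (where $w(t_j)=z(t_j)$, covered by the second condition in \eqref{DI::lem::eq::static_multiplier}) from all other times (where $w(t)=0$, covered by the first), and then sum. Your additional bookkeeping about $t_0=-1$ lying outside the summation range is a harmless refinement of the same argument.
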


\begin{proof}
Since the filter $\Psi = I_{n_z+n_w}$ is static, there is no state $\xi$, i.e., $n_\xi = 0$. Moreover, the mere definition of $\Del$ in \eqref{DI::eq::sys::del} implies
\begin{equation*}
	y(t)^\top My(t) = (\bullet)^\top M \mat{c}{z(t) \\ \Del(z)(t)} = z(t)^\top \mat{c}{I \\ 0}^\top M \mat{c}{I \\ 0}z(t) \stackrel{\eqref{DI::lem::eq::static_multiplier}}{\geq} 0
	\teq{ for all }t \in \N_0 \setminus \{t_1, t_2, \dots \}
\end{equation*}
and
\begin{equation*}
	y(t_k)^\top My(t_k) = (\bullet)^\top M \mat{c}{z(t_k) \\ \Del(z)(t_k)} = z(t_k)^\top\mat{c}{I \\ I}^\top M \mat{c}{I \\ I}z(t_k) \stackrel{\eqref{DI::lem::eq::static_multiplier}}{\geq} 0
	\teq{ for all }k\in \N.
\end{equation*}
It remains to observe that these two inequalities yield a special case of \eqref{DI::eq::pointwise_IQC} which completes the proof.
\end{proof}

As a consequence of Theorem \ref{DI::theo::IQC} and Lemma \ref{DI::lem::pIQC_ADT}, we obtain the following quadratic performance test.

\begin{corollary}
\label{DI::coro::ana_ADT}
Let $\Psi := I_{n_z+n_w}$. The interconnection \eqref{DI::eq::sys} achieves quadratic performance with index $P$ for all $(t_k)_{k\in \N_0}$ with \eqref{DI::eq::ADT} if there exist matrices $X\in \S^n$ and $M\in \S^{n_z+n_w}$ satisfying \eqref{DI::theo::eq::main} and \eqref{DI::lem::eq::static_multiplier}.
\end{corollary}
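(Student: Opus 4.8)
The plan is to obtain the corollary by a direct specialization of the IQC theorem (Theorem~\ref{DI::theo::IQC}) to the static filter $\Psi := I_{n_z+n_w}$ together with the multiplier supplied by Lemma~\ref{DI::lem::pIQC_ADT}. Fix any sequence $(t_k)_{k\in\N_0}$ of impulse instants with \eqref{DI::eq::ADT} and let $X \in \S^n$, $M \in \S^{n_z+n_w}$ be matrices satisfying \eqref{DI::theo::eq::main} and \eqref{DI::lem::eq::static_multiplier}. Because $\Psi$ is static we have $n_\xi = 0$, so there is no filter state, the map $Z:\N\to\S^{0}$ is vacuous, $\S^{n_\xi+n} = \S^n$, and the second family of inequalities in \eqref{DI::theo::eq::mainb} collapses to $X \cg 0$, which is already contained in the assumed \eqref{DI::theo::eq::main}. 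By Lemma~\ref{DI::lem::pIQC_ADT}, the operator $\Del$ in \eqref{DI::eq::sys::del} attached to $(t_k)_{k\in\N_0}$ satisfies \eqref{DI::eq::pointwise_IQC} for the filter $\Psi$, the matrix $M$, this trivial map $Z$, and the sequence $(t_k)_{k\in\N_0}$. Hence the only hypothesis of Theorem~\ref{DI::theo::IQC} not yet verified is the well-posedness of \eqref{DI::eq::sys_intercon}.

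To establish well-posedness, I would extract a diagonal sub-block of the matrix inequality \eqref{DI::theo::eq::maina}. Setting $n_\xi = 0$ and $\Psi = I_{n_z+n_w}$, and testing the quadratic form in \eqref{DI::theo::eq::maina} with vectors whose state and disturbance components vanish and whose $w$-component is arbitrary, one is left with
\[
w^{\top}\!\left(B_w^{\top} X B_w + \mat{c}{D_{zw} \\ I}^{\top} M \mat{c}{D_{zw} \\ I} + D_{ew}^{\top} Q D_{ew}\right)\! w < 0 \quad\text{for all } w \neq 0,
\]
where $P = \smat{Q & S \\ S^{\top} & R}$. Since $X \cg 0$ and $Q \cge 0$, the first and the third summand are positive semidefinite, so necessarily $\smat{D_{zw} \\ I}^{\top} M \smat{D_{zw} \\ I} \cl 0$. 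Now suppose $(I - D_{zw})v = 0$ for some $v \neq 0$; then $D_{zw}v = v$, hence $\smat{D_{zw} \\ I}v = \smat{I \\ I}v$, and therefore $0 \leq v^{\top}\smat{I \\ I}^{\top} M \smat{I \\ I}v = v^{\top}\smat{D_{zw} \\ I}^{\top} M \smat{D_{zw} \\ I}v < 0$ by the second condition in \eqref{DI::lem::eq::static_multiplier} — a contradiction. Thus $\det(I - D_{zw}) \neq 0$ and \eqref{DI::eq::sys_intercon} is well-posed.

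With well-posedness in hand, Theorem~\ref{DI::theo::IQC} applies and yields that \eqref{DI::eq::sys} is stable and achieves quadratic performance with index $P$; since $(t_k)_{k\in\N_0}$ with \eqref{DI::eq::ADT} was arbitrary, the corollary follows. I expect essentially no conceptual obstacle here: the bulk of the argument is the bookkeeping needed to specialize Theorem~\ref{DI::theo::IQC} to a static filter, and the only genuinely substantive point is the well-posedness step above, where it is precisely the standing assumption $Q \cge 0$ from Definition~\ref{DI::def::stab} (together with $X \cg 0$) that renders the two flanking terms harmless, so that no hypothesis beyond the stated LMIs is required.
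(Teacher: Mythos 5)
Your proposal is correct and follows essentially the same route as the paper: invoke Lemma \ref{DI::lem::pIQC_ADT} to supply the IQC with an empty terminal-cost matrix, establish well-posedness by isolating the $(2,2)$ block of \eqref{DI::theo::eq::maina} (which reads $B_w^\top X B_w + (\bullet)^\top M\smat{D_{zw}\\ I} + D_{ew}^\top Q D_{ew} \cl 0$) and combining $X\cg 0$, $Q\cge 0$ with the second condition in \eqref{DI::lem::eq::static_multiplier}, and then apply Theorem \ref{DI::theo::IQC}. Your version merely spells out the nonsingularity contradiction in more detail than the paper does.
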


\begin{proof}

\emph{Well-Posedness:} 
Note that the $(2, 2)$ block of \eqref{DI::theo::eq::maina} in the indicated partition and \eqref{DI::theo::eq::mainb} read as $B_w^\top XB_w + (\bullet)^\top M \smat{D_{zw} \\ I} +(\bullet)^\top P \smat{D_{ew} \\ 0} \cl 0$ and $X \cg 0$, respectively. This implies $(\bullet)^\top M \smat{D_{zw} \\ I} \cl 0$ by the assumption on the left upper block of the performance index $P$. Together with $(\bullet)^\top M \smat{I \\ I} \cge 0$, we can conclude that $I - D_{zw}$ is nonsingular, i.e., the well-posedness of the interconnection \eqref{DI::eq::sys}. 

\emph{Quadratic Performance:}    Let $(t_k)_{k\in \N_0}$ with \eqref{DI::eq::ADT} be arbitrary. By assumption and Lemma \ref{DI::lem::pIQC_ADT}, the corresponding operator $\Del$ in \eqref{DI::eq::sys::del} satisfies \eqref{DI::eq::pointwise_IQC} for the given matrix $M$, an empty matrix $Z$ and for the sequence $(t_k)_{k \in \N_0}$. As we have shown well-posedness of the interconnection \eqref{DI::eq::sys} and since \eqref{DI::theo::eq::main} is satisfied for the given matrices $M$ and $X$, we can apply Theorem \ref{DI::theo::IQC} to conclude that the interconnection achieves quadratic performance with index $P$. Since $(t_k)_{k\in \N_0}$ with \eqref{DI::eq::ADT} was arbitrary, we have shown the claim.
\end{proof}


The construction of suitable IQCs is much more interesting and challenging if the dwell-time of the considered impulse sequences is not arbitrary. Such additional knowledge
permits or even requires the use of nontrivial filters $\Psi$. In the sequel, we focus on impulse sequences with \eqref{DI::eq::RDT} which include those with \eqref{DI::eq::EDT} and only comment on those with \eqref{DI::eq::MDT}.
For implementations, we can choose filters given by the transfer matrix
\begin{subequations}
\label{DI::eq::filter_TF_SS}
\begin{equation}
	\Psi(z) := I_{n_z+n_w} \kron \mat{cccc}{\frac{1}{z^\nu} & \dots & \frac{1}{z} & 1}^\top,
\end{equation}
for example, with some length $\nu \in \N_0$. Note that $\Psi$ admits the realization
\begin{equation}
	(I_{n_z+n_w} \kron J_\nu, ~ I_{n_z+n_w} \kron e_\nu, ~ I_{n_z+n_w} \kron C_\nu, ~ I_{n_z+n_w} \kron e_{\nu+1}).
\end{equation}
\end{subequations}
Here, $e_\nu := \smat{0_{\nu-1} \\1}$, $C_\nu := \smat{I_\nu \\ 0_{1\times \nu}}$, $e_{\nu+1} := \smat{0_\nu \\ 1}$ and $J_\nu \in \R^{\nu \times \nu}$ is an (upper) Jordan-block with eigenvalue zero. This common choice is motivated by its nice approximation properties \cite{SchKoe12, Pin85}.
The length $\nu$ can be viewed as a tuning knob for trading off conservatism versus computational burden. In many practical situations, small positive lengths (say smaller than $4$) are sufficient to achieve good results.

\vspace{1ex}

In order to construct IQCs with nontrivial filters $\Psi$, recall that this essentially amounts to finding suitable dissipation inequalities involving the output $y$ and state $\xi$ of the filter $\Psi$ in response to the input $\smat{z \\ \Del(z)}$, i.e., dissipation inequalities for the auxiliary system depicted in Fig.~\ref{DI::fig::block_unc2}.
Now, it is a simple but instrumental observation that, due to the definition of the
impulsive operator $\Del$ in \eqref{DI::eq::sys::del}, this auxiliary system can be explicitly described in state-space by
\begin{equation}
\label{DI::eq::sys_auxiliary}
\mat{c}{\xi(t+1) \\ y(t)} = \mat{cc}{A_\Psi & B_\Psi \smat{I_{n_z} \\ 0} \\[1ex] C_\Psi & D_\Psi \smat{I_{n_z} \\ 0}} \mat{c}{\xi(t) \\ z(t)},\quad
\mat{c}{\xi(t_k+1) \\ y(t_k)} = \mat{cc}{A_\Psi & B_\Psi \smat{I_{n_z} \\ I_{n_z}} \\[1.5ex] C_\Psi & D_\Psi \smat{I_{n_z} \\ I_{n_z}}} \mat{c}{\xi(t_k) \\ z(t_k)}
\end{equation}
for $t \in \N_0 \setminus \{t_1, t_2, \dots \}$ and $k \in \N$. This is is a standard linear impulsive system similar to the one in \eqref{DI::eq::sys_standard}.
This insight enables us to construct IQCs based on employing the recapitulated results in Section~\ref{DI::sec::recap}. We just have to replace the tests for quadratic performance with index $P$ by the corresponding tests for achieving non-strict dissipativity with supply rate $M$, as indicated in Remark \ref{DI::rema::clock} (c) for the clock based approach. This leads to the following result.

\begin{figure}
\begin{center}
	\includegraphics{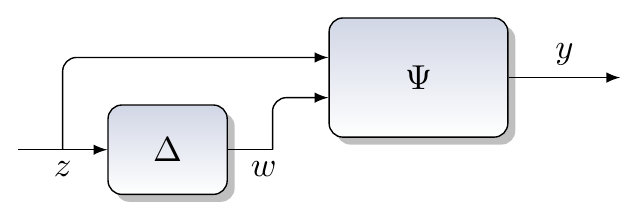}
\end{center}
\caption{Block diagram of the auxiliary system \eqref{DI::eq::sys_auxiliary} appearing in \eqref{DI::eq::pointwise_IQC} involving the impulsive operator \eqref{DI::eq::sys::del} and the filter~\eqref{DI::eq::filter}.}
\label{DI::fig::block_unc2}
\end{figure}

\subsubsection{Clock Based IQCs with Terminal Cost for the Impulse Operator}

\begin{lemma}
\label{DI::lem::pIQC_RDT}
Let $(t_k)_{k \in \N_0}$ be a sequence of impulse instants with \eqref{DI::eq::RDT}. Then the corresponding operator $\Del$ in \eqref{DI::eq::sys::del} satisfies \eqref{DI::eq::pointwise_IQC} for the sequence $(t_k)_{k\in \N_0}$ and for any matrices $M \in \S^{m}$ and $Z_0, \dots, Z_{T_{\max}} \in \S^{n_\xi}$ satisfying
\begin{subequations}
	\label{DI::lem::eq::dynamic_multiplier_RDT}
	\begin{equation}
		\dlabel{DI::lem::eq::dynamic_multiplier_RDTa}{DI::lem::eq::dynamic_multiplier_RDTb}
		(\bullet)^\top \mat{cc|c}{Z_{k+1} & 0 & \\ 0 & -Z_k & \\ \hline  && M}
		\mat{cc}{A_\Psi & B_\Psi \smat{I_{n_z} \\ 0} \\ I & 0 \\ \hline C_\Psi & D_\Psi\smat{I_{n_z} \\ 0}} \cge 0
		\teq{and}
		(\bullet)^\top \mat{cc|c}{Z_{0} & 0 & \\ 0 & -Z_k & \\ \hline  && M}
		\mat{cc}{A_\Psi & B_\Psi \smat{I_{n_z} \\ I_{n_z}} \\ I & 0 \\ \hline C_\Psi & D_\Psi\smat{I_{n_z} \\ I_{n_z}}} \cge 0
	\end{equation}
\end{subequations}
for all indices $k$ contained in $[0, T_{\max} - 1]\cap \N_0$ and $[T_{\min}, T_{\max}] \cap \N_0$, respectively.
\end{lemma}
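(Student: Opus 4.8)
The plan is to transport the clock-based storage-function argument underlying Theorem~\ref{DI::theo::clock} to the auxiliary system \eqref{DI::eq::sys_auxiliary}: the inequalities \eqref{DI::lem::eq::dynamic_multiplier_RDT} are precisely the analogue of \eqref{DI::theo::eq::clock_Lyapunov_LMIs} with the supply rate $M$ in place of the performance index $P$ (and with the positivity constraints dropped), and telescoping the resulting clock-dependent dissipation chain yields \eqref{DI::eq::pointwise_IQC} with the constant terminal cost map $Z \equiv Z_0$.

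First I would record that, by the definition of $\Del$ in \eqref{DI::eq::sys::del}, the state $\xi$ and output $y$ of the filter \eqref{DI::eq::filter} driven by $u = \smat{z \\ \Del(z)}$ coincide with those of the standard impulsive system \eqref{DI::eq::sys_auxiliary}: for $t\in\N_0\setminus\{t_1,t_2,\dots\}$ the flow triple $\big(A_\Psi,\,B_\Psi\smat{I_{n_z}\\0},\,C_\Psi,\,D_\Psi\smat{I_{n_z}\\0}\big)$ is active, and at $t=t_k$, $k\in\N$, the jump triple $\big(A_\Psi,\,B_\Psi\smat{I_{n_z}\\I_{n_z}},\,C_\Psi,\,D_\Psi\smat{I_{n_z}\\I_{n_z}}\big)$ is active. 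Associating with $(t_k)_{k\in\N_0}$ the clock $\theta$ from \eqref{DI::eq::clock} and setting $V(t) := \xi(t)^\top Z_{\theta(t)}\xi(t)$, the core step is to verify the one-step dissipation inequality
\begin{equation*}
	V(t+1) - V(t) + y(t)^\top M y(t) \geq 0 \teq{for all} t\in\N_0 .
\end{equation*}
For a flow instant $t$, i.e. $t\in[t_k+1,t_{k+1}-1]\cap\N_0$ for some $k\in\N_0$, one has $\theta(t+1)=\theta(t)+1$ and, by \eqref{DI::eq::RDT}, $\theta(t) = t-t_k-1 \in [0,\,t_{k+1}-t_k-2] \subseteq [0, T_{\max}-1]$; the inequality then follows by the congruence of \eqref{DI::lem::eq::dynamic_multiplier_RDTa} with $\col(\xi(t), z(t))$, where the first block row reproduces the flow update $\xi(t+1)$ and the last block row reproduces $y(t)$, using the index $k:=\theta(t)$. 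For a jump instant $t=t_k$, $k\in\N$, one has $\theta(t_k) = t_k-t_{k-1}-1 \in [T_{\min}, T_{\max}]$ by \eqref{DI::eq::RDT} and $\theta(t_k+1) = 0$, so the inequality follows from \eqref{DI::lem::eq::dynamic_multiplier_RDTb} with the index $k:=\theta(t_k)$.

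Finally I would fix $k\in\N$ and sum the one-step inequality over $t=0,\dots,t_k$; the left-hand side telescopes to $V(t_k+1) - V(0) + \sum_{t=0}^{t_k} y(t)^\top M y(t)$. Since $\xi(0)=0$ gives $V(0)=0$ and $\theta(t_k+1)=0$ gives $V(t_k+1) = \xi(t_k+1)^\top Z_0 \xi(t_k+1)$, this is exactly \eqref{DI::eq::pointwise_IQC} with $Z(k):=Z_0$, which proves the claim.

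There is no substantial obstacle here; the argument is essentially a bookkeeping exercise. The two points needing care are (i) checking that the clock values visited during flow steps stay in $[0, T_{\max}-1]$ and those at jump steps stay in $[T_{\min}, T_{\max}]$ — which is exactly where the range dwell-time hypothesis \eqref{DI::eq::RDT} is used — and (ii) the initial segment $t\in[0, t_1-1]$, where $\theta$ starts at $\theta(0)=0$ and the flow bookkeeping above applies verbatim because $t_1 = t_1 - t_0 - 1 \le T_{\max}$.
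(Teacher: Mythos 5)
Your proof is correct and follows exactly the route the paper intends: the paper derives this lemma by applying the clock-based dissipation argument of Theorem~\ref{DI::theo::clock} (in the non-strict, supply-rate-$M$ form of Remark~\ref{DI::rema::clock}(c)) to the auxiliary impulsive system \eqref{DI::eq::sys_auxiliary}, and your clock bookkeeping, index ranges, and telescoping to obtain \eqref{DI::eq::pointwise_IQC} with $Z(k)\equiv Z_0$ match that argument in every detail.
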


Together with Theorem \ref{DI::theo::IQC}, we obtain our first novel IQC based quadratic performance test for the interconnection \eqref{DI::eq::sys} and sequences of impulse instants with \eqref{DI::eq::RDT} involving a nontrivial dynamic filter $\Psi$. A new result for sequences satisfying \eqref{DI::eq::MDT} is obtained by setting $T_{\max} := T_{\min}$ and including the LMI
\begin{equation*}
(\bullet)^\top \mat{cc|c}{Z_{T_{\min}} & 0 & \\ 0 & -Z_{T_{\min}} & \\ \hline  && M}
\mat{cc}{A_\Psi & B_\Psi \smat{I_{n_z} \\ 0} \\ I & 0 \\ \hline C_\Psi & D_\Psi\smat{I_{n_z} \\ 0}} \cge 0.
\end{equation*}
Note that the proof of the latter variant relies on modifying the clock \eqref{DI::eq::clock} as
\begin{equation*}
%
\theta(t) := t- t_k-1 \text{ for }t \in [t_k+1, t_k+1+T_{\min}]\cap \N_0
\teq{and}
\theta(t) := T_{\min} \text{ for }t \in (t_k+1+T_{\min}, t_{k+1}] \cap \N_0.
%
\end{equation*}

\begin{theorem}[IQC and Clock Based Quadratic Performance Test]
\label{DI::coro::ana_RDT}
The interconnection \eqref{DI::eq::sys} achieves quadratic performance with index $P$ for all $(t_k)_{k\in \N_0}$ with \eqref{DI::eq::RDT} if there exist matrices $X \in \S^{n_\xi + n}$, $M\in \S^{m}$ and $Z_0, \dots, Z_{T_{\max}}\in \S^{n_\xi}$ satisfying \eqref{DI::theo::eq::maina}, \eqref{DI::lem::eq::dynamic_multiplier_RDT} and
\begin{equation*}
	X - \mat{cc}{Z_k & 0 \\ 0 & 0 }\cg 0
	\teq{ for all }k \in [0, T_{\max}]\cap \N_0.
\end{equation*}
\end{theorem}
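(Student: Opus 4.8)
The plan is to run the same two-step scheme as in the proof of Corollary~\ref{DI::coro::ana_ADT} — well-posedness, then an appeal to the IQC theorem — only with Lemma~\ref{DI::lem::pIQC_RDT} replacing Lemma~\ref{DI::lem::pIQC_ADT}. First I would record that, since $M$ and $Z_0, \dots, Z_{T_{\max}}$ satisfy \eqref{DI::lem::eq::dynamic_multiplier_RDT}, Lemma~\ref{DI::lem::pIQC_RDT} delivers, for \emph{every} sequence $(t_k)_{k\in\N_0}$ with \eqref{DI::eq::RDT}, an \eqref{DI::eq::pointwise_IQC} for $\Del$ with respect to the filter \eqref{DI::eq::filter}, the matrix $M$, the sequence $(t_k)_{k\in\N_0}$, and the \emph{constant} terminal-cost map $Z(k) := Z_0$ for all $k \in \N$; the weight $Z_0$ is the relevant one because the clock underlying the dissipation chain behind Lemma~\ref{DI::lem::pIQC_RDT} is reset to zero immediately after each impulse instant, so the filter storage at every $t_k+1$ carries the weight $Z_{\theta(t_k+1)} = Z_0$. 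With this in hand, it only remains to (i) establish well-posedness of \eqref{DI::eq::sys} and (ii) check that \eqref{DI::theo::eq::main} holds for the pair $(X,M)$ and this map $Z$; Theorem~\ref{DI::theo::IQC} then yields at once that \eqref{DI::eq::sys} is stable (with $x \in \ell_2^n$) and achieves quadratic performance with index $P$, and since the sequence with \eqref{DI::eq::RDT} was arbitrary, the claim follows. Upgrading ``$x \in \ell_2^n$'' to exponential stability in the sense of Definition~\ref{DI::def::stab} is then handled as indicated right after Theorem~\ref{DI::theo::IQC}, by additionally certifying the $\t\rho$-weighted dissipation inequality recorded there; this is the point at which the whole family $X - \smat{Z_k & 0 \\ 0 & 0} \cg 0$, $k \in [0, T_{\max}]\cap\N_0$, is needed rather than just the instance $k = 0$.

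For (i), I would restrict the quadratic form in \eqref{DI::theo::eq::maina} to $\xi(t) = 0$, $x(t) = 0$, $d(t) = 0$ and an arbitrary $w(t) = w$. By \eqref{DI::eq::sys_augmented}, the resulting signals are $\xi(t+1) = B_\Psi \smat{D_{zw} \\ I}w$, $x(t+1) = B_w w$, $y(t) = D_\Psi \smat{D_{zw} \\ I}w$ and $e(t) = D_{ew}w$, so that \eqref{DI::theo::eq::maina} specializes to
\begin{equation*}
	\mat{c}{\xi(t+1) \\ x(t+1)}^\top X \mat{c}{\xi(t+1) \\ x(t+1)} + y(t)^\top M y(t) + e(t)^\top Q e(t) < 0 \teq{ for all }w \neq 0 .
\end{equation*}
Suppose, towards a contradiction, that $(I - D_{zw})w = 0$ for some $w \neq 0$. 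Then $\smat{D_{zw} \\ I}w = \smat{I \\ I}w$, hence $\xi(t+1) = B_\Psi \smat{I \\ I}w$ and $y(t) = D_\Psi \smat{I \\ I}w$. Evaluating the quadratic form associated with the jump multiplier inequality \eqref{DI::lem::eq::dynamic_multiplier_RDTb} (for any admissible index, e.g., $k = T_{\min}$) at vanishing filter state and input $w$, and recognizing $\xi(t+1)$ and $y(t)$ from above, gives exactly $\xi(t+1)^\top Z_0 \xi(t+1) + y(t)^\top M y(t) \cge 0$, i.e., $y(t)^\top M y(t) \cge -\xi(t+1)^\top Z_0 \xi(t+1)$. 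Inserting this bound into the displayed strict inequality and then invoking $X - \smat{Z_0 & 0 \\ 0 & 0} \cge 0$ and $Q \cge 0$ shows that its left-hand side is in fact nonnegative — a contradiction. Hence $\det(I - D_{zw}) \neq 0$, i.e., \eqref{DI::eq::sys} is well-posed.

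For (ii), \eqref{DI::theo::eq::maina} is an assumption, while \eqref{DI::theo::eq::mainb} for the constant map $Z(k) = Z_0$ is literally $X - \smat{Z_0 & 0 \\ 0 & 0} \cg 0$, which is the $k = 0$ case of the assumed constraints — so all hypotheses of Theorem~\ref{DI::theo::IQC} are met and the argument closes. The only step that I expect to require genuine thought is the well-posedness part: in contrast to Corollary~\ref{DI::coro::ana_ADT}, where the static multiplier permits one to read off $(\bullet)^\top M \smat{D_{zw} \\ I} \cl 0$ and $(\bullet)^\top M \smat{I \\ I} \cge 0$ and combine them directly, here the filter $\Psi$ is dynamic and one must instead play the main LMI \eqref{DI::theo::eq::maina} off against the jump multiplier LMI \eqref{DI::lem::eq::dynamic_multiplier_RDTb} and the terminal-cost positivity $X \cge \smat{Z_0 & 0 \\ 0 & 0}$. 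The remaining pieces are routine once Lemma~\ref{DI::lem::pIQC_RDT} and Theorem~\ref{DI::theo::IQC} are invoked, and the \eqref{DI::eq::MDT} variant mentioned just before the theorem would be proved identically, using the modified clock displayed there in place of \eqref{DI::eq::clock}.
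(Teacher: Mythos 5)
Your proposal is correct and follows essentially the same route as the paper: well-posedness is obtained by playing the $(2,2)$ block of \eqref{DI::theo::eq::maina} against the zero-state instance of the jump multiplier inequality \eqref{DI::lem::eq::dynamic_multiplier_RDTb} and the $k=0$ coupling condition (the paper packages this as $(\bullet)^\top P\smat{I\\I}\cge 0$ versus $(\bullet)^\top P\smat{D_{zw}\\I}\cl 0$ for $P:=\smat{B_\Psi\\D_\Psi}^\top\smat{Z_0&0\\0&M}\smat{B_\Psi\\D_\Psi}$, which is your contradiction argument in disguise), and then Theorem~\ref{DI::theo::IQC} is applied with Lemma~\ref{DI::lem::pIQC_RDT} and the constant terminal cost $Z(k)=Z_0$. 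Your reading of which coupling inequality feeds \eqref{DI::theo::eq::mainb} matches the paper's.
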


We emphasize that, in contrast to other IQC based stability or performance tests, Theorem \ref{DI::coro::ana_RDT} does not require well-posedness of the interconnection \eqref{DI::eq::sys} as an additional assumption. Instead, well-posedness can be concluded in a natural fashion once the involved LMIs are feasible.

\begin{proof}
As illustrated in the proof of Corollary \ref{DI::coro::ana_ADT}, it suffices to show that feasibility of the latter LMIs implies well-posedness of the interconnection \eqref{DI::eq::sys} in order to apply Theorem \ref{DI::theo::IQC} and Lemma \ref{DI::lem::pIQC_RDT}. To this end, observe that the right lower block of \eqref{DI::lem::eq::dynamic_multiplier_RDTb} yields
\begin{equation*}
	\mat{c}{I\\ I}^\top P \mat{c}{I\\ I} \cge 0
	\teq{ for } P := \mat{c}{B_\Psi \\ D_\Psi}^\top \mat{cc}{Z_0 & 0 \\ 0 & M}\mat{c}{B_\Psi \\ D_\Psi}.
\end{equation*}
On the other hand, $X - \smat{Z_k & 0 \\ 0 & 0} \cg 0$ for $k = 0$ implies
\begin{equation*}
	(\bullet)^\top P \mat{c}{D_{zw} \\ I}
	\cle (\bullet)^\top \left(X - \mat{cc}{Z_0 & 0 \\ 0 & 0}\right) \mat{c}{B_\Psi \smat{D_{zw} \\ I} \\ B_w} + (\bullet)^\top  \mat{c}{B_\Psi \\ D_\Psi}^\top \mat{cc}{Z_0 & 0 \\ 0 & M}\mat{c}{B_\Psi \\ D_\Psi} \mat{c}{D_{zw} \\ I}
\end{equation*}
and the right hand side can be simplified to
\begin{equation*}
	(\bullet)^\top X \mat{c}{B_\Psi \smat{D_{zw} \\ I} \\ B_w} + (\bullet)^\top M D_\Psi \smat{D_{zw} \\ I}
	\stackrel{Q \cge 0}{\cle} 	(\bullet)^\top X \mat{c}{B_\Psi \smat{D_{zw} \\ I} \\ B_w} + (\bullet)^\top M D_\Psi \smat{D_{zw} \\ I} + (\bullet)^\top P \mat{c}{D_{ew} \\ 0} \cl 0.
\end{equation*}
Here, the latter inequality follows from considering the $(2,2)$ block of \eqref{DI::theo::eq::maina} in the indicated partition. In summary, we have shown $(\bullet)^\top P \smat{I\\ I} \cge 0$ and $(\bullet)^\top P \smat{D_{zw} \\ I} \cl 0$ which implies that $I - D_{zw}$ is nonsingular and, hence, that the interconnection \eqref{DI::eq::sys} is well-posed.
\end{proof}

We can relate the above quadratic performance test to the clock-based one in Theorem \ref{DI::theo::clock} as 
proposed by \cite{Bri13}. This is somewhat surprising since the latter can be viewed as resulting from a parameter-dependent Lyapunov function approach and because it is in general difficult to (theoretically) compare such approaches with those based on IQCs.

\begin{lemma}
\label{DI::lem::conservativsm_estimate_clock}
Let $\smat{A_J & B_J \\ C_J & D_J} := \smat{A & B \\ C & D} + \smat{B_w \\ D_{ew}}(I - D_{zw})^{-1}(C_z, D_{zd})$  and let the performance index $P$ be nonsingular. Moreover, suppose that the inequalities in Theorem \ref{DI::coro::ana_RDT} are feasible. Then the inequalities \eqref{DI::theo::eq::clock_Lyapunov_LMIs} in Theorem \ref{DI::theo::clock} are feasible as well.
\end{lemma}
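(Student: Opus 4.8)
The plan is to read off the clock matrices $X_0,\dots,X_{T_{\max}}$ of Theorem~\ref{DI::theo::clock} as Schur complements of the joint storage matrix $X$, after transferring the dissipation inequalities hidden in the LMIs of Theorem~\ref{DI::coro::ana_RDT} from the filtered interconnection to the closed loop $(A_J,B_J,C_J,D_J)$ by eliminating the filter state.

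Fix feasible $X\in\S^{n_\xi+n}$, $M\in\S^m$ and $Z_0,\dots,Z_{T_{\max}}\in\S^{n_\xi}$, set $\hat X_k:=X-\smat{Z_k&0\\0&0}\cg0$ for $k\in[0,T_{\max}]\cap\N_0$, and partition $X=\smat{X_{11}&X_{12}\\X_{12}^\top&X_{22}}$ with $X_{11}\in\S^{n_\xi}$, so that $X_{11}-Z_k\cg0$. Along a flow step of \eqref{DI::eq::sys} one has $w=\Del(z)(t)=0$; substituting $w=0$ into \eqref{DI::theo::eq::maina} gives $(\bullet)^\top X\smat{\xi^+\\x^+}-(\bullet)^\top X\smat{\xi\\x}+y^\top My+(\bullet)^\top P\smat{e\\d}<0$ with $\xi^+=A_\Psi\xi+B_\Psi\smat{C_z\\0}x+B_\Psi\smat{D_{zd}\\0}d$, $x^+=Ax+Bd$, $e=Cx+Dd$ and $y=C_\Psi\xi+D_\Psi\smat{C_z\\0}x+D_\Psi\smat{D_{zd}\\0}d$, while \eqref{DI::lem::eq::dynamic_multiplier_RDTa} evaluated at the filter input $z=C_zx+D_{zd}d$ reads $\xi^{+\top}Z_{k+1}\xi^+-\xi^\top Z_k\xi+y^\top My\ge0$ with exactly the same $\xi^+$ and $y$. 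Subtracting the second from the first cancels $y^\top My$ and leaves, for all $(\xi,x,d)$ and $k\in[0,T_{\max}-1]\cap\N_0$, the ``filtered flow'' inequality $(\bullet)^\top\hat X_{k+1}\smat{\xi^+\\x^+}-(\bullet)^\top\hat X_k\smat{\xi\\x}+(\bullet)^\top P\smat{e\\d}<0$. Along an impulse instant, $w=\Del(z)(t_k)=z(t_k)$ forces $z=(I-D_{zw})^{-1}(C_zx+D_{zd}d)=:w^\ast$ (with $I-D_{zw}$ nonsingular, as in the proof of Theorem~\ref{DI::coro::ana_RDT}), hence $x^+=A_Jx+B_Jd$ and $e=C_Jx+D_Jd$ by the stated definition of the jump matrices; substituting $w=w^\ast$ into \eqref{DI::theo::eq::maina} and $z=w^\ast$ into \eqref{DI::lem::eq::dynamic_multiplier_RDTb} and subtracting again yields the ``filtered jump'' inequality $(\bullet)^\top\hat X_0\smat{\xi_j^+\\x^+}-(\bullet)^\top\hat X_k\smat{\xi\\x}+(\bullet)^\top P\smat{e\\d}<0$ for all $(\xi,x,d)$ and $k\in[T_{\min},T_{\max}]\cap\N_0$, with $\xi_j^+$ affine in $(\xi,x,d)$. (These two inequalities are precisely the clock–Lyapunov conditions of Theorem~\ref{DI::theo::clock} for the cascade of the plant with the filter $\Psi$; the remaining task is to project this certificate down onto the plant alone.)

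The step I expect to be the real obstacle is the elimination of $\xi$: it is an internal variable of the multiplier, it feeds into its own successor through $A_\Psi$, and $\hat X_k$ couples it with $x$, so one cannot simply set $\xi=0$ — and there is a genuine tension, since collapsing the present term $(\bullet)^\top\hat X_k\smat{\xi\\x}$ and the successor term $(\bullet)^\top\hat X_{k+1}\smat{\xi^+\\x^+}$ both to pure functions of the plant state would require conflicting choices of $\xi$. The resolution is to exploit that the inequalities hold for \emph{every} $\xi$ and to pick the clock-dependent value $\xi^\ast_k(x):=-(X_{11}-Z_k)^{-1}X_{12}x$, which by completion of squares makes $(\bullet)^\top\hat X_k\smat{\xi^\ast_k(x)\\x}=x^\top Y_kx$ \emph{exactly}, where $Y_k:=X_{22}-X_{12}^\top(X_{11}-Z_k)^{-1}X_{12}$ is well defined and $\cg0$ since $\hat X_k\cg0$; for the successor one only needs the one-sided bound $(\bullet)^\top\hat X_{k+1}\smat{\xi^+\\x^+}\cge(x^+)^\top Y_{k+1}x^+$, which again holds by completion of squares whatever the value of $\xi^+$. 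Putting $X_k:=Y_k$ for $k\in[0,T_{\max}]\cap\N_0$ and noting that $(\xi^\ast_k(x),x,d)$ is nonzero whenever $(x,d)$ is, the filtered flow inequality delivers $(x^+)^\top Y_{k+1}x^+-x^\top Y_kx+(\bullet)^\top P\smat{e\\d}<0$ for all $(x,d)\neq0$ with $x^+=Ax+Bd$, $e=Cx+Dd$, i.e.\ \eqref{DI::theo::eq::clock_Lyapunov_LMIsb}; the same substitution in the filtered jump inequality gives \eqref{DI::theo::eq::clock_Lyapunov_LMIsc}; and $X_k=Y_k\cg0$ is \eqref{DI::theo::eq::clock_Lyapunov_LMIsa}. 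Hence \eqref{DI::theo::eq::clock_Lyapunov_LMIs} is feasible.

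The one computation in the second paragraph that deserves care is that the filter output $y$ generated by the input $\smat{z\\\Del(z)}$ of the augmented system \eqref{DI::eq::sys_augmented} must coincide, separately in the flow and in the jump case, with the $y$ appearing in the matching inequality of Lemma~\ref{DI::lem::pIQC_RDT}, so that the indefinite supply $y^\top My$ drops out exactly; this is exactly what the evaluation points $w=0$ and $w=w^\ast$ accomplish, using that $\Del(z)(t)$ equals $0$ on flow instants and $z(t)$ on impulse instants. The assumption $Q\cge0$ enters only through the well-posedness argument inherited from the proof of Theorem~\ref{DI::coro::ana_RDT}; nonsingularity of $P$ is not used in the argument above (it would only be needed if one instead routed through the lifting LMIs \eqref{DI::theo::eq::basic_Lyapunov_LMIs} and the converse half of Theorem~\ref{DI::theo::clock}).
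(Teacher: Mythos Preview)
Your proof is correct and takes a genuinely different route from the paper's. Both arguments first combine \eqref{DI::theo::eq::maina} with \eqref{DI::lem::eq::dynamic_multiplier_RDT} to obtain, for each $k$, a strict inequality in $(\xi,x,d)$ built from $\hat X_k=X-\diag(Z_k,0)$ and $\hat X_{k+1}$ (the paper writes this as ``incorporating $K_k$'' and then cancelling the $w$-block). The divergence is in how the filter state $\xi$ is removed. The paper dualizes the resulting LMI via the dualization lemma, cancels the first block column in the dual (projecting out $\xi$), and dualizes back; this needs $P$ nonsingular and produces $X_k=\big(\smat{0\\I}^\top\hat X_k^{-1}\smat{0\\I}\big)^{-1}$. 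You instead eliminate $\xi$ primally: set $\xi=\xi_k^\ast(x)$ to hit the Schur complement $Y_k$ exactly in the present term, and invoke the Schur-complement lower bound $(\bullet)^\top\hat X_{k+1}\smat{\cdot\\x^+}\ge(x^+)^\top Y_{k+1}x^+$ for the successor term. Since $A\ge B$ and $A+C<0$ give $B+C<0$, strictness survives, and your $Y_k$ is exactly the paper's $X_k$ by the block-inverse formula. Your argument is shorter, avoids both invocations of the dualization lemma, and---as you correctly observe---does not use the hypothesis that $P$ is nonsingular; the paper's proof genuinely requires that hypothesis to dualize.
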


In other words, the clock based IQC performance test in Theorem \ref{DI::coro::ana_RDT} could be more
conservative than the purely clock based one in Theorem \ref{DI::theo::clock}. Based on our observations for several numerical examples, we conjecture that taking the filter $\Psi$ as in \eqref{DI::eq::filter_TF_SS} and letting the length $\nu$ approach infinity yields asymptotically the same conservatism.

\begin{proof}
Let $k \in [0, T_{\max}-1]\cap \N_0$ be fixed. Then we can introduce the matrix
\begin{equation}\label{h0}
	K_k := (\bullet)^\top \mat{cc|c}{Z_{k+1} & 0 & \\ 0 & -Z_k & \\ \hline && M}\mat{cc}{A_\Psi & B_\Psi \\ I & 0 \\ \hline C_\Psi & D_\Psi}
	\teq{ which satisfies }
	(\bullet)^\top K_k \mat{cc}{I & 0 \\ 0 & I_{n_z} \\ 0 & 0} \cge 0
\end{equation}
due to \eqref{DI::lem::eq::dynamic_multiplier_RDTa}. By elementary computations, we can incorporate this matrix into \eqref{DI::theo::eq::maina} which yields
\begin{equation*}
	(\bullet)^\top \mat{cc|c:c}{\t X_{k+1} & 0 & &\\ 0 & -\t X_k & &\\ \hline &&  K_k& \\ \hdashline &&& P}
	\mat{cc|c:c}{A_\Psi & B_\Psi \smat{C_z \\ 0} & B_\Psi \smat{D_{zw} \\ I_{n_w}} & B_\Psi \smat{D_{zd} \\ 0} \\ 0 & A & B_w & B \\ I& 0 & 0 & 0\\ 0& I & 0 & 0\\ \hline \smat{I \\ 0 \\ 0} & \smat{0 \\ C_z \\ 0} & \smat{0 \\ D_{zw} \\ I_{n_w}} &  \smat{0 \\ D_{zd} \\ 0}\\ \hdashline 0 & C & D_{ew} & D \\ 0 & 0 & 0 & I} \cl 0
\end{equation*}
for $\t X_k := X - \smat{Z_k & 0 \\ 0 & 0}$ and $\t X_{k+1} := X - \smat{Z_{k+1} & 0 \\ 0 & 0}$. These matrices are positive definite by assumption. Canceling the second block row and column allows us to make use of \eqref{h0}, 
which leads to
\begin{equation*}
	(\bullet)^\top \mat{cc|c}{\t X_{k+1} & 0 & \\ 0 & -\t X_k & \\ \hline  && P}
	\mat{cc|c}{A_\Psi & B_\Psi \smat{C_z \\ 0}  & B_\Psi \smat{D_{zd} \\ 0} \\ 0 & A & B \\ I& 0  & 0\\ 0& I  & 0\\ \hline  0 & C &  D \\ 0 & 0 & I} \cl 0.
\end{equation*}
Since $P$, $\t X_k$ and $\t X_{k+1}$ are nonsingular, we can apply the dualization lemma (see, e.g., \cite{SchWei00})  in order to infer
\begin{equation*}
	(\bullet)^\top \mat{cc|c}{\t X_{k+1}^{-1} & 0 & \\ 0 & -\t X_k^{-1} & \\ \hline  && P^{-1}}
	\mat{ccc}{I & 0 & 0  \\ 0 & I & 0 \\ -A_\Psi^\top  & 0 & 0 \\ -\left(B_\Psi \smat{C_z \\ 0}\right)^\top & -A^\top & -C^\top \\ \hline  0 & 0 & I \\ -\left(B_\Psi \smat{D_{zd} \\ 0}\right)^\top & -B^\top & -D^\top } \cg 0.
\end{equation*}
Canceling the first block column then results in
\begin{equation*}
	(\bullet)^\top \mat{cc|c}{X_{k+1}^{-1} & 0 & \\ 0 & -X_k^{-1} & \\ \hline  && P^{-1}}
	\mat{cc}{I & 0 \\-A^\top & -C^\top \\ \hline  0 & I \\ -B^\top & -D^\top } \cg 0
\end{equation*}
for $X_k := \left(\smat{0 \\ I}^\top \t X_{k}^{-1}\smat{0 \\ I}\right)^{-1}$ and $X_{k+1}:= \left(\smat{0 \\ I}^\top \t X_{k+1}^{-1}\smat{0 \\ I}\right)^{-1}$ which are both positive definite. Since $k \in [0, T_{\max}-1]\cap \N_0$ was arbitrary and by applying the dualization lemma once more, we can conclude that \eqref{DI::theo::eq::clock_Lyapunov_LMIsa} and \eqref{DI::theo::eq::clock_Lyapunov_LMIsb} are satisfied. In a similar fashion, we can conclude that \eqref{DI::theo::eq::clock_Lyapunov_LMIsc} holds for the same matrices $X_k$ based on \eqref{DI::lem::eq::dynamic_multiplier_RDTb}.
\end{proof}

\subsection{Lifting Based IQCs with Terminal Cost for the Impulse Operator}

We obtain another IQC for the impulse operator if modifying the quadratic performance test based on the discrete-time lifting technique in Theorem \ref{DI::theo::basic_Lyapunov} in the same fashion as we did for the clock-based approach. This leads to the following result.

\begin{lemma}
\label{DI::lem::pIQC_RDT2}
Let $(t_k)_{k \in \N_0}$ be a sequence of impulse instants with \eqref{DI::eq::RDT}. Then the corresponding operator $\Del$ in \eqref{DI::eq::sys::del} satisfies \eqref{DI::eq::pointwise_IQC} for the sequence $(t_k)_{k\in \N_0}$ and for any matrices $M \in \S^{m}$ and $Z\in \S^{n_\xi}$ with
\begin{equation}
	\label{DI::lem::eq::dynamic_multiplier_RDT2}
	(\bullet)^\top \mat{cc|c}{Z & 0 & \\ 0 & -Z & \\ \hline  && I_{k+1} \kron M}
	\mat{cccccc}{A_\Psi^{k+1} & A_\Psi^{k}B_\Psi\smat{I \\ 0} & A_\Psi^{k-1}B_\Psi \smat{I\\ 0} & \dots & A_\Psi B_\Psi \smat{I \\ 0} & B_\Psi \smat{I \\ I}
		\\ I & 0 & \dots & \dots & \dots & 0\\ \hline C_\Psi & D_\Psi\smat{I \\ 0}& 0 & \dots & \dots & 0 \\
		C_\Psi A_\Psi & C_\Psi B_\Psi \smat{I \\ 0} & D_\Psi \smat{I\\ 0} &  &  & \vdots \\
		\vdots & \vdots & \vdots &  &  & 0 \\
		C_\Psi A_\Psi^{k} & C_\Psi A_\Psi^{k-1} B_\Psi \smat{I \\ 0} & C_\Psi A_\Psi^{k-2} B_\Psi \smat{I\\ 0} & \dots & C_\Psi B_\Psi \smat{I\\ 0} & D_\Psi \smat{I \\ I}} \cge 0
\end{equation}
for all $k\in [T_{\min}, T_{\max}]\cap \N$.
\end{lemma}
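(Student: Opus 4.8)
The goal is to show that the auxiliary impulsive system \eqref{DI::eq::sys_auxiliary}, which computes the filter output $y$ from $z$ and $\Del(z)$, satisfies the dissipation inequality \eqref{DI::eq::pointwise_IQC} whenever the lifting-type LMI \eqref{DI::lem::eq::dynamic_multiplier_RDT2} holds. The strategy mirrors exactly the reduction already used for Lemma \ref{DI::lem::pIQC_RDT}: instead of invoking a clock, we chop the filter trajectory into the segments between consecutive impulse instants and apply, on each segment, the ``lifted'' version of the energy bound — precisely the quantity appearing in \eqref{DI::lem::eq::dynamic_multiplier_RDT2}. Concretely, fix $z \in \ell_{2e}^{n_z}$ and let $\xi$, $y$ be the resulting state and output trajectories of \eqref{DI::eq::sys_auxiliary} with $\xi(0)=0$. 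For each $k\in\N_0$ set $k_\ast := t_{k+1}-t_k-1 \in [T_{\min},T_{\max}]\cap\N$, which is the active flow length on the interval $[t_k+1,t_{k+1}]$.

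First I would write the discrete-time variation-of-constants formula for \eqref{DI::eq::sys_auxiliary} on one such segment: starting from the post-jump state $\xi(t_k+1)$, the values $y(t_k+1),\dots,y(t_{k+1}-1)$ are expressed through powers of $A_\Psi$ applied to $\xi(t_k+1)$ plus the flow inputs $z(t_k+1),\dots,z(t_{k+1}-1)$ via the $B_\Psi\smat{I\\0}$ channel, while the jump value $y(t_{k+1})$ and the next post-jump state $\xi(t_{k+1}+1)$ involve in addition the ``double'' input $z(t_{k+1})$ through the $B_\Psi\smat{I\\I}$ and $A_\Psi\cdot,\ B_\Psi\smat{I\\I}$ channels. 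Stacking these relations produces exactly the tall matrix in \eqref{DI::lem::eq::dynamic_multiplier_RDT2} with $k:=k_\ast$, acting on the column vector $\col(\xi(t_k+1),\,z(t_k+1),\dots,z(t_{k+1}))$. Left-multiplying \eqref{DI::lem::eq::dynamic_multiplier_RDT2} by this column and its transpose then gives, for every $k\in\N_0$,
\begin{equation*}
	\xi(t_{k+1}+1)^\top Z\,\xi(t_{k+1}+1) - \xi(t_k+1)^\top Z\,\xi(t_k+1) + \sum_{t=t_k+1}^{t_{k+1}} y(t)^\top M y(t) \geq 0.
\end{equation*}

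Next I would telescope these per-segment inequalities. Summing over $k=0,\dots,k-1$ and using the telescoping cancellation of the $\xi(t_\ell+1)^\top Z\xi(t_\ell+1)$ terms leaves $\xi(t_{k}+1)^\top Z\,\xi(t_{k}+1) - \xi(t_0+1)^\top Z\,\xi(t_0+1) + \sum_{t=t_0+1}^{t_{k}} y(t)^\top M y(t)\ge 0$. Since $t_0=-1$ and $\xi(0)=0$, the term $\xi(t_0+1)^\top Z\xi(t_0+1)=\xi(0)^\top Z\xi(0)$ vanishes and the sum starts at $t=0$, so this is precisely \eqref{DI::eq::pointwise_IQC} with $Z(k):=Z$ constant. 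I would also note that the auxiliary system \eqref{DI::eq::sys_auxiliary} is indeed the one implicitly defined by feeding $u=\smat{z\\\Del(z)}$ into the filter \eqref{DI::eq::filter}, because $\Del(z)(t)=0$ off the impulse instants and $\Del(z)(t_k)=z(t_k)$ — this is the same observation made just before \eqref{DI::eq::sys_auxiliary}, so it can be cited rather than re-derived.

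The one point that deserves care — and is the only genuine obstacle — is checking that the stacked variation-of-constants identity on a segment of length $k_\ast$ coincides on the nose with the coefficient matrix displayed in \eqref{DI::lem::eq::dynamic_multiplier_RDT2}, in particular that the jump at the right endpoint $t_{k+1}$ is correctly placed in the last block column (the $B_\Psi\smat{I\\I}$, $D_\Psi\smat{I\\I}$ entries and the $A_\Psi^{j}$-propagated versions thereof) while all interior instants use the flow channel $B_\Psi\smat{I\\0}$, $D_\Psi\smat{I\\0}$. This is a bookkeeping matter entirely analogous to the derivation of Theorem \ref{DI::theo::basic_Lyapunov} from lifting (where the roles of $(A,B,C,D,A_J,\dots)$ are played here by $(A_\Psi, B_\Psi\smat{I\\0}, C_\Psi, D_\Psi\smat{I\\0}, A_\Psi, B_\Psi\smat{I\\I},\dots)$), and once that dictionary is fixed the rest is immediate. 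Everything else — telescoping, the vanishing initial term, and the identification of the auxiliary system — is routine.
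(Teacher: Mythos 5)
Your proposal is correct and follows exactly the route the paper intends: it applies the lifting argument underlying Theorem \ref{DI::theo::basic_Lyapunov} to the auxiliary impulsive system \eqref{DI::eq::sys_auxiliary}, with the performance supply rate replaced by $M$ and the terminal-cost matrix $Z$, obtaining per-segment dissipation inequalities from \eqref{DI::lem::eq::dynamic_multiplier_RDT2} via the variation-of-constants formula and then telescoping with $\xi(0)=0$ to reach \eqref{DI::eq::pointwise_IQC} with constant $Z(k):=Z$. The bookkeeping you flag (flow channel $B_\Psi\smat{I\\0}$ at interior instants, jump channel $B_\Psi\smat{I\\I}$ at the right endpoint) does match the displayed coefficient matrix, so there is no gap.
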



Together with Theorem \ref{DI::theo::IQC}, we obtain another novel IQC based quadratic performance test for the interconnection \eqref{DI::eq::sys} and sequences of impulse instants with \eqref{DI::eq::RDT}. The corresponding test for impulse sequences satisfying \eqref{DI::eq::MDT} is obtained by setting $T_{\max} := T_{\min}$ and including the LMI
\begin{equation*}
(\bullet)^\top \mat{cc|c}{Z & 0 & \\ 0 & -Z & \\ \hline  && M}
\mat{cc}{A_\Psi & B_\Psi \smat{I_{n_z} \\ 0} \\ I & 0 \\ \hline C_\Psi & D_\Psi\smat{I_{n_z} \\ 0}} \cge 0.
\end{equation*}

\begin{theorem}[IQC and Lifting Based Quadratic Performance Test]
\label{DI::coro::ana_RDT2}
The interconnection \eqref{DI::eq::sys} achieves quadratic performance with index $P$ for all $(t_k)_{k\in \N_0}$ with \eqref{DI::eq::RDT} if there exist matrices $X \in \S^{n_\xi + n}$, $M\in \S^{m}$ and $Z\in \S^{n_\xi}$ satisfying \eqref{DI::theo::eq::maina}, \eqref{DI::lem::eq::dynamic_multiplier_RDT2} and
\begin{equation*}
	X - \mat{cc}{Z & 0 \\ 0 & 0 }\cg 0.
\end{equation*}
Moreover, if the inequalities in Theorem \ref{DI::coro::ana_RDT} are feasible, then so are the above inequalities.
\end{theorem}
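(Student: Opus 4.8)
The plan is to leave the matrices $X$ and $M$ of a feasible point of Theorem~\ref{DI::coro::ana_RDT} untouched and to take $Z := Z_0$. Since \eqref{DI::theo::eq::maina} does not involve the filter storage matrices at all, it continues to hold verbatim, and the required positivity $X - \smat{Z & 0 \\ 0 & 0} \cg 0$ is precisely the $k = 0$ instance of the positivity constraints already present in Theorem~\ref{DI::coro::ana_RDT}. Everything therefore reduces to deriving the single matrix inequality \eqref{DI::lem::eq::dynamic_multiplier_RDT2} with $Z = Z_0$ from the family \eqref{DI::lem::eq::dynamic_multiplier_RDT}.

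For this I would fix $k \in [T_{\min}, T_{\max}] \cap \N$ and read \eqref{DI::lem::eq::dynamic_multiplier_RDTa} and \eqref{DI::lem::eq::dynamic_multiplier_RDTb} as dissipation inequalities along the auxiliary impulsive system \eqref{DI::eq::sys_auxiliary}: \eqref{DI::lem::eq::dynamic_multiplier_RDTa} for clock value $j$ says that $\xi(t+1)^\top Z_{j+1} \xi(t+1) - \xi(t)^\top Z_j \xi(t) + y(t)^\top M y(t) \geq 0$ across one flow step with clock value $j$, while \eqref{DI::lem::eq::dynamic_multiplier_RDTb} for clock value $k$ is the analogous inequality across a jump, with the post-jump clock resetting to $0$ so that $Z_0$ reappears. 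Summing the flow inequalities for $j = 0, 1, \dots, k-1$ — all available because $k - 1 \leq T_{\max} - 1$ — makes the $Z_1, \dots, Z_{k-1}$ contributions telescope, and adding the jump inequality for the index $k \in [T_{\min}, T_{\max}]$ cancels the remaining $Z_k$ term, leaving the non-strict dissipation inequality
\begin{equation*}
	\xi(t_{k+1}+1)^\top Z_0\, \xi(t_{k+1}+1) - \xi(t_k+1)^\top Z_0\, \xi(t_k+1) + \sum_{t = t_k+1}^{t_{k+1}} y(t)^\top M y(t) \geq 0 .
\end{equation*}
Expressing $\xi(t_{k+1}+1)$, $\xi(t_k+1)$ and $y(t_k+1), \dots, y(t_{k+1})$ in terms of the free data $\xi(t_k+1)$ and $z(t_k+1), \dots, z(t_{k+1})$ — by iterating the flow dynamics of \eqref{DI::eq::sys_auxiliary} and closing with one jump — turns this into exactly the matrix inequality \eqref{DI::lem::eq::dynamic_multiplier_RDT2} with $Z = Z_0$, the tall block matrix printed there being precisely the lifted map from this free data to $(\xi(t_{k+1}+1),\, \xi(t_k+1),\, y(t_k+1), \dots, y(t_{k+1}))$.

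I expect the only delicate point to be the bookkeeping in this substitution: one must check that the powers $A_\Psi^0, \dots, A_\Psi^{k+1}$, the two input matrices $B_\Psi \smat{I\\0}$ (flow) and $B_\Psi \smat{I\\I}$ (jump), and the feedthrough terms $D_\Psi \smat{I\\0}$ and $D_\Psi \smat{I\\I}$ land in the correct block positions of that matrix. This is the same telescoping computation which — once positivity constraints are reinstated — yields the implication \eqref{DI::theo::eq::clock_Lyapunov_LMIs} $\Rightarrow$ \eqref{DI::theo::eq::basic_Lyapunov_LMIs} in Theorem~\ref{DI::theo::clock}, here carried out for the impulsive system \eqref{DI::eq::sys_auxiliary} with the supply rate $M$ in the role of the performance index and all inequality signs reversed. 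Since neither positivity of the $Z_k$ nor any definiteness of $M$ enters the telescoping itself, the argument goes through unchanged; no factorization of the multiplier and no extra hypotheses are needed, and the proof is complete once $Z := Z_0$ has been confirmed to work.
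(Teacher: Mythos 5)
Your write-up addresses only the second sentence of the theorem (the ``Moreover'' part), and for that part it is correct and coincides with what the paper intends: setting $Z := Z_0$, noting that \eqref{DI::theo::eq::maina} and the coupling condition carry over verbatim, and telescoping the flow inequalities \eqref{DI::lem::eq::dynamic_multiplier_RDTa} for clock values $0,\dots,k-1$ together with the jump inequality \eqref{DI::lem::eq::dynamic_multiplier_RDTb} at index $k\in[T_{\min},T_{\max}]$ along the auxiliary system \eqref{DI::eq::sys_auxiliary} does yield \eqref{DI::lem::eq::dynamic_multiplier_RDT2}; the paper disposes of this with the remark that it is shown ``similarly as the second statement in Theorem~\ref{DI::theo::clock}'', whose proof is exactly this telescoping with the stacked matrices $U_0,\dots,U_k$.

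The genuine gap is that the first and main claim --- that feasibility of \eqref{DI::theo::eq::maina}, \eqref{DI::lem::eq::dynamic_multiplier_RDT2} and $X-\smat{Z & 0 \\ 0 & 0}\cg 0$ implies quadratic performance for all sequences with \eqref{DI::eq::RDT} --- is never argued. You would need to combine Lemma~\ref{DI::lem::pIQC_RDT2} (which certifies the IQC with terminal cost for the impulsive operator) with Theorem~\ref{DI::theo::IQC}, and Theorem~\ref{DI::theo::IQC} explicitly \emph{assumes} well-posedness of the interconnection, i.e.\ $\det(I-D_{zw})\neq 0$. Since the theorem deliberately imposes no well-posedness hypothesis, this must be extracted from the LMIs themselves, and that extraction is essentially the entire content of the paper's proof of the first statement: one cancels the leading block row and column of \eqref{DI::lem::eq::dynamic_multiplier_RDT2} to obtain a nonnegativity condition $\mathscr{I}^\top P\,\mathscr{I}\cge 0$ for a lifted multiplier $P$ built from $Z$ and $I_{\sigma+1}\kron M$, combines $X-\smat{Z & 0\\ 0 & 0}\cg 0$ with the $(2,2)$ block of the lifted version of \eqref{DI::theo::eq::maina} to get $\mathscr{D}^\top P\,\mathscr{D}\cl 0$ for a lifted matrix $\mathscr{D}$ whose diagonal blocks contain $\smat{D_{zw}\\ I}$, and concludes $\mathrm{im}(\mathscr{I})\cap\mathrm{im}(\mathscr{D})=\{0\}$, hence that $I-D_{zw}$ is nonsingular. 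None of this, nor even the appeal to Theorem~\ref{DI::theo::IQC} and Lemma~\ref{DI::lem::pIQC_RDT2}, appears in your proposal, so the performance conclusion remains unproved; declaring the proof ``complete once $Z:=Z_0$ has been confirmed to work'' conflates the feasibility-comparison statement with the performance statement.
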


Note that the second statement can be shown similarly as the second statement in Theorem \ref{DI::theo::clock}. Hence, the IQC and lifting based performance test is guaranteed to be no more conservative than the IQC and clock based test. However and in contrast to the relation of the original lifting and clock based tests, the converse of this statement is not true. As one reason, the matrices $Z_0, \dots, Z_{T_{\max}}$ are in general not positive definite, which would be required when following the proof of the third statement in Theorem \ref{DI::theo::clock}.
The numerical examples provided in the next subsection illustrates the 
gap between the tests in Theorems \ref{DI::coro::ana_RDT} and \ref{DI::coro::ana_RDT2}.

\begin{proof}[Proof of first statement.]
As illustrated in the proof of Corollary \ref{DI::coro::ana_ADT}, it suffices to show that feasibility of the LMIs in Theorem \ref{DI::coro::ana_RDT2} implies well-posedness of the interconnection \eqref{DI::eq::sys} in order to apply Theorem \ref{DI::theo::IQC} and Lemma \ref{DI::lem::pIQC_RDT2}.
To this end, note that canceling the first block row and column of \eqref{DI::lem::eq::dynamic_multiplier_RDT2} yields
\begin{equation*}
	\arraycolsep=1pt
	(\bullet)^\top P \!\underbrace{\mat{cccc}{\smat{I\\ 0} & & & \\ & \ddots  & & \\ && \smat{I \\ 0} & \\ & & & \smat{I \\ I}}}_{=: \mathscr{I}} \cge 0
	\teq{ for } P := (\bullet)^\top \mat{cc}{Z & 0 \\ 0 & I_{\sigma+1}  \kron M}\!
	\underbrace{\mat{ccccc}{A_\Psi^{\sigma}B_\Psi & A_\Psi^{\sigma-1}B_\Psi  & \dots & B_\Psi \\
			\hline D_\Psi& 0 & \dots & 0 \\
			C_\Psi B_\Psi  & D_\Psi  & \ddots & \vdots \\
			\vdots & \vdots & \ddots & 0 \\
			C_\Psi A_\Psi^{\sigma-1} B_\Psi& \dots  & \dots & D_\Psi}}_{=: \smat{\h B_\Psi \\ \h D_\Psi}}
\end{equation*}
for $\sigma := T_{\max}$. Next, let us abbreviate the filtered system matrices in \eqref{DI::eq::sys_augmented} by $\Ac$, $\Bc_w$, $\Bc$, $\Cc_z$, $\Cc$, $\Dc_{zw}$, $\Dc_{zd}$, $\Dc_{ew}$, $\Dc$ and let us define the corresponding lifted matrices $\h \Bc_w$ and $\h \Dc_{zw}$ analogously as $\h B_\Psi$ and $\h D_\Psi$, respectively. Then we get, after somewhat tedious but elementary computations,
\begin{equation*}
	\mat{cc|c}{I_{n_\xi} & 0 & 0\\ \hline 0 & 0 & I}\mat{c}{\h \Bc_{w} \\ \hline \h \Dc_{zw}}
	= \mat{c}{\h B_\Psi \\ \hline \h D_\Psi} \underbrace{\mat{cccc}{\smat{D_{zw} \\ I} & 0 & \dots & 0 \\ \smat{C_zB_w \\ 0} & \smat{D_{zw} \\ I} & \ddots & \vdots \\
			\vdots & & \ddots & 0 \\
			\smat{C_zA^{\sigma-2}B_w \\ 0} & \dots & \dots & \smat{D_{zw} \\ I}}}_{=: \mathscr{D}}.
\end{equation*}
By $X - \smat{Z & 0 \\ 0& 0} \cg 0$, we can then conclude
\begin{equation*}
	\mathscr{D}^\top P \mathscr{D}
	\cle \h \Bc_w^\top \left(X \!-\! \mat{cc}{Z & 0 \\ 0 & 0}\right) \h \Bc_w  + \mathscr{D}^\top  \mat{c}{\h B_\Psi \\ \h D_\Psi}^{\!\top} \!\mat{cc}{Z & 0 \\ 0 & I_{\sigma+1} \!\kron\! M}\mat{c}{\h B_\Psi \\ \h D_\Psi} \mathscr{D} \\
	= (\bullet)^\top \!\mat{cc}{X & 0 \\ 0 & I_{\sigma+1} \!\kron\! M} \mat{c}{\h \Bc_w \\ \h \Dc_{zw}}\!.
\end{equation*}
The last term is negative definite because it results from considering the $(2, 2)$ block of the lifted LMI corresponding to \eqref{DI::theo::eq::maina}. In summary, we have $\mathscr{I}^\top P \mathscr{I} \cge 0$ and $\mathscr{D}^\top P \mathscr{D} \cl 0$ which implies $\mathrm{im}(\mathscr{I}) \cap \mathrm{im}(\mathscr{D}) = \{0\}$. By the particular structure of these matrices, one observes that $I - D_{zw}$ is nonsingular and, hence, that the interconnection \eqref{DI::eq::sys} is well-posed.
\end{proof}

As before, we can show that the quadratic performance test in Theorem \ref{DI::coro::ana_RDT2} is in general more conservative than the one given in Theorem \ref{DI::theo::basic_Lyapunov} and we conjecture that this discrepancy vanishes asymptotically if the length of the employed filter approaches infinity.

\begin{lemma}
\label{DI::lem::conservativsm_estimate_lifting}
Let $\smat{A_J & B_J \\ C_J & D_J} := \smat{A & B \\ C & D} + \smat{B_w \\ D_{ew}}(I - D_{zw})^{-1}(C_z, D_{zd})$  and let the performance index $P$ be nonsingular. Moreover, suppose that the inequalities in Theorem \ref{DI::coro::ana_RDT2} are feasible. Then the inequalities \eqref{DI::theo::eq::basic_Lyapunov_LMIs} in Theorem \ref{DI::theo::basic_Lyapunov} are feasible as well.
\end{lemma}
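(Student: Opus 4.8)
The plan is to follow the proof of Lemma~\ref{DI::lem::conservativsm_estimate_clock} almost verbatim, replacing each one‑step argument there by its lifted counterpart over a full dwell‑period. Fix $k \in [T_{\min}, T_{\max}] \cap \N$; the goal is to exhibit a single $X_{\mathrm{new}} \in \S^n$ with $X_{\mathrm{new}} \cg 0$ for which the matrix inequality in \eqref{DI::theo::eq::basic_Lyapunov_LMIsb} holds for this $k$, and since $X_{\mathrm{new}}$ will be independent of $k$, this yields \eqref{DI::theo::eq::basic_Lyapunov_LMIs}. The first step is to lift \eqref{DI::theo::eq::maina}: interpreted as a strict one‑step dissipation inequality for the filtered interconnection \eqref{DI::eq::sys_augmented} with supply rate $\diag(M, P)$ and with $w$ and $d$ viewed as free inputs, it can be summed over the $k+1$ time instants of a period $[t_j+1, t_{j+1}]$ with $t_{j+1} - t_j - 1 = k$. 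The $X$‑storage terms telescope, and a short induction over the period, using that the $i$‑th summand is negative definite in its own arguments $\xi(t_j+i), x(t_j+i), w(t_j+i), d(t_j+i)$, shows that the resulting quadratic form is negative definite in $(\xi(t_j+1), x(t_j+1), w(t_j+1), \dots, w(t_{j+1}), d(t_j+1), \dots, d(t_{j+1}))$. This gives a lifted LMI carrying $X$ at both endpoints, the weight $I_{k+1}\kron M$ acting on the filter outputs $y(t_j+1), \dots, y(t_{j+1})$, and the weight $I_{k+1}\kron P$ acting on the output pairs $\smat{e(t)\\d(t)}$.

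Next I would absorb the multiplier. Let $K_k$ be the lifted dissipation form obtained by running the filter \eqref{DI::eq::filter} for $k+1$ steps driven by generic full‑dimensional inputs and weighting it by $\diag(Z, -Z, I_{k+1}\kron M)$. By construction $(\bullet)^\top K_k \mathscr{J}$ equals the left‑hand side of \eqref{DI::lem::eq::dynamic_multiplier_RDT2}, hence is $\cge 0$, where $\mathscr{J}$ is the block‑diagonal selection fixing the filter's $w$‑inputs to zero at the $k$ flow instants and to its $z$‑input at the jump instant. Replacing $I_{k+1}\kron M$ by $K_k$ in the lifted LMI and simultaneously passing from $X$ to $\t X := X - \smat{Z & 0\\0 & 0}$ at both endpoints leaves the quadratic form unchanged, since the storage terms $\pm\xi^\top Z\xi$ contained in $K_k$ cancel exactly the extra contributions produced by that passage. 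So the modified LMI is still $\cl 0$, and $\t X \cg 0$ by hypothesis.

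Now restrict and reduce. Feasibility of the inequalities in Theorem~\ref{DI::coro::ana_RDT2} entails well‑posedness of \eqref{DI::eq::sys} (as shown in the proof of that theorem), so $I - D_{zw}$ is nonsingular and one may substitute $w = 0$ at the $k$ flow instants and $w(t_{j+1}) = (I - D_{zw})^{-1}(C_z x(t_{j+1}) + D_{zd}d(t_{j+1})) = z(t_{j+1})$ at the jump instant into the modified LMI, leaving $\xi(t_j+1), x(t_j+1), d(t_j+1), \dots, d(t_{j+1})$ as the only free variables. On this subspace the interconnection dynamics collapse to those of the standard impulsive system \eqref{DI::eq::sys_standard} with $\smat{A_J & B_J\\C_J & D_J}$, while the $K_k$‑block becomes $R^\top N_k R$ for a suitable $R$, with $N_k := (\bullet)^\top K_k\mathscr{J}$ the left‑hand side of \eqref{DI::lem::eq::dynamic_multiplier_RDT2}, hence $\cge 0$. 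Dropping this nonnegative block leaves $(\bullet)^\top \diag(\t X, -\t X, I_{k+1}\kron P)$ applied to the filtered, lifted, impulsive outer factor still $\cl 0$. As $\t X$ and $P$ are nonsingular, two applications of the dualization lemma together with cancellation of the block column associated with the filter state $\xi$, exactly as in the corresponding passage of the proof of Lemma~\ref{DI::lem::conservativsm_estimate_clock} and legitimate because the interconnection output $e$ and the impulsive state update do not depend on $\xi$, eliminate $\xi$ and turn the inequality into precisely \eqref{DI::theo::eq::basic_Lyapunov_LMIsb} for the dwell time $k$, with $X_{\mathrm{new}} := \big(\smat{0\\I}^\top \t X^{-1}\smat{0\\I}\big)^{-1}$, where $\smat{0\\I}$ picks out the $x$‑coordinates. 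Since $\t X \cg 0$, this $X_{\mathrm{new}}$ is positive definite, which is \eqref{DI::theo::eq::basic_Lyapunov_LMIsa}, and it does not depend on $k$; hence \eqref{DI::theo::eq::basic_Lyapunov_LMIs} is feasible.

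The main obstacle is the bookkeeping in the first two steps: one has to check that lifting \eqref{DI::theo::eq::maina} over a period and absorbing the lifted multiplier $K_k$ reproduce the block structure of \eqref{DI::lem::eq::dynamic_multiplier_RDT2} exactly, in particular the positions of the $\smat{I\\0}$ and $\smat{I\\I}$ factors at flow versus jump instants, and that the substitution $w(t_{j+1}) = z(t_{j+1})$ really converts the filtered interconnection's one‑period map into the lifted impulsive map underlying Theorem~\ref{DI::theo::basic_Lyapunov}. Once this is settled, the elimination of $\xi$ by dualization is routine, being word‑for‑word the argument already used for Lemma~\ref{DI::lem::conservativsm_estimate_clock}.
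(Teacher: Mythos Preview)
Your proposal is correct and follows essentially the same route as the paper's own sketch: lift \eqref{DI::theo::eq::maina} over one full dwell period, absorb the lifted multiplier block $K$ (your $K_k$) into it while passing from $X$ to $\t X = X - \diag(Z,0)$, restrict to the impulsive subspace by closing the $w$-loop at the jump instant and nulling $w$ at the flow instants (which the paper writes as right-multiplication by the block involving $\h\Del(I-\h D_{zw}\h\Del)^{-1}$ with $\h\Del=\diag(0,\dots,0,I)$), drop the now nonnegative $K$-block, and finish by the dualize--cancel--dualize argument from Lemma~\ref{DI::lem::conservativsm_estimate_clock}. Your identification of the bookkeeping as the only genuine difficulty matches the paper's assessment exactly.
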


\begin{proof}[Sketch of proof]
The proof follows the lines of the one of Lemma \ref{DI::lem::conservativsm_estimate_clock}, but one has to deal with lifted matrices and corresponding inequalities which requires too much space to be presented in detail.
To sketch the ideas, we introduce the matrix
\begin{equation*}
	K := (\bullet)^\top\! \mat{cc|c}{Z & 0 & \\ 0 & -Z & \\ \hline  && I_{k+1} \kron M}
	\mat{cccc}{A_\Psi^{k+1} & A_\Psi^{k}B_\Psi  & \dots & B_\Psi
		\\ I & 0 & \dots  & 0\\ \hline C_\Psi & D_\Psi &   & 0 \\
		\vdots & \vdots & \ddots  &  \\
		C_\Psi A_\Psi^{k} & C_\Psi A_\Psi^{k-1} B_\Psi & \dots  & D_\Psi } 	
	\text{ ~satisfying~ }(\bullet)^\top K \mat{cc}{I & 0 \\ 0 & \mathscr{I}} \cge 0
\end{equation*}
due to \eqref{DI::lem::eq::dynamic_multiplier_RDT2} and with
$\mathscr{I}$ being the same matrix as in the previous proof.
Afterwards, one can incorporate this matrix into the lifted version of the inequality \eqref{DI::theo::eq::maina}. This is rather tedious and, in the case of $T_{\min} = T_{\max} = 1$, the resulting inequality reads as
\begin{equation*}
	\arraycolsep=5pt
	0 \cg (\bullet)^\top \mat{cc|c:c}{\t X & 0 & &\\ 0 & -\t X & &\\ \hline &&  K& \\ \hdashline &&& I_2 \kron P}
	\mat{cc|cc:cc}{
		\bullet & \bullet & \bullet & \bullet & \bullet & \bullet \\
		0 & A^2 & AB_w & B_w & AB & B \\
		I& 0 & 0 & 0 & 0 & 0\\
		0& I & 0 & 0 & 0 & 0\\ \hline
		I & 0 & 0 & 0 & 0 & 0 \\
		0 & C_z & D_{zw} & 0 & D_{zd} & 0 \\
		0 & 0 & I & 0 & 0 & 0 \\
		0 & C_z A & C_z B_w & D_{zw} & C_z B & D_{zd} \\
		0 & 0 & 0 & I & 0 & 0 \\\hdashline
		%
		0 & C & D_{ew} & 0 & D & 0 \\
		0 & 0 & 0 & 0 & I & 0 \\
		0 & CA & CB_w & D_{ew} & CB & D \\
		0 & 0 & 0 & 0 & 0 & I}
\end{equation*}
for $\t X := X - \smat{Z & 0 \\ 0 & 0}\cg 0$ and where the entries ``$\bullet$'' in the matrix on the right are no longer relevant.
Next, one multiplies the right hand side of this inequality from the right with the full column rank matrix
\begin{equation*}
	\mat{c:cc}{I & & \\ \hdashline
		& I & 0  \\
		&\h \Del (I - \h D_{zw}\h \Del)^{-1}\h C_z & \h \Del(I - \h D_{zw}\h \Del)^{-1}\h D_{zd} \\
		&0 & I}
\end{equation*}
and from the left with its transpose; here, the involved (lifted) matrices are given by $\h \Del := \diag(0, \dots, 0, I)$,
\begin{equation*}
	\h C_z := \mat{c}{C_z \\ \vdots \\ C_zA^k}, \quad
	\h D_{zw}:= \mat{ccc}{D_{zw} & & 0 \\ \vdots & \ddots & \\ C_z A^{k-1}B_w & \dots & D_{zw}}
	\teq{ and }
	\h D_{zd}:= \mat{ccc}{D_{zd} & & 0 \\ \vdots & \ddots & \\ C_z A^{k-1}B_d & \dots & D_{zd}}.
\end{equation*}
In the resulting inequalities, the matrices $A_J, B_J, C_J, D_J$ emerge at the desired spots and the sign constraints on $K$ can be exploited. In the
case of $T_{\min} = T_{\max} = 1$, one obtains in this fashion the inequality
\begin{equation*}
	0 \cg (\bullet)^\top \mat{cc|c}{\t X & 0 & \\ 0 & -\t X & \\ \hline && I_2 \kron P}
	\mat{cc|cc}{
		\bullet & \bullet & \bullet & \bullet\\
		0 & A_JA & A_JB & B_J \\
		I& 0 & 0 & 0 \\
		0& I & 0 & 0 \\ \hline
		%
		%
		0 & C & D & 0 \\
		0 & 0 & I & 0 \\
		0 & C_JA & C_JB & D_J \\
		0 & 0 & 0 & I}.
\end{equation*}
The remaining part of the proof relies on identical arguments as those for Lemma \ref{DI::lem::conservativsm_estimate_clock} and is omitted.
\end{proof}

\begin{remark}[Path Based IQCs]
It is not too difficult to derive alternative IQCs building upon the path based approach from \cite{XiaTra19}. However, the resulting performance test involves LMIs that are rather expensive to solve and also seem to be numerically troublesome for some LMI solvers.
\end{remark}

\begin{remark}[Computational Burden]
Let us emphasize that, for standard impulsive systems of the form \eqref{DI::eq::sys_standard} without additional uncertainties, all of the proposed IQC based stability tests are in general computationally more expensive than the available ones in Subsection \ref{DI::sec::recap}. This is mostly due to the additional dynamics of the filter $\Psi$.
However and as illustrated in the next subsection, this changes when dealing with interconnected systems where only few components are affected by impulses.
\end{remark}

\begin{remark}
Note that Theorem \ref{DI::theo::IQC} is somewhat tailored to the impulsive operator $\Del$ in \eqref{DI::eq::sys::del} in the sense that it depends on the underlying sequence of impulse instants. In principle, it would be nicer to find an IQC with terminal cost exactly as defined in \cite{SchVee18,Sch21} for this operator, i.e., with a constant map $Z$ and with the involved sequence defined by $t_k := k-1$.
For example, this can be achieved by exploiting that, for the particular filter \eqref{DI::eq::filter_TF_SS}, we have
\begin{equation*}
	\Psi\left(\mat{c}{z \\ w}\right)(t) = \mat{c}{z(t-\nu) \\ w(t-\nu) \\ \vdots \\ z(t) \\ w(t)} = \diag\left(\mat{c}{I \\ p_{1}I}, \dots,  \mat{c}{I \\ p_{\nu+1} I} \right) \mat{c}{z(t-\nu) \\ \vdots \\ z(t)}
	\teq{ for some }p \in \Ps_{\nu+1}
\end{equation*}
and by choosing the middle matrix $M$ as a suitable static full-block multiplier; here, $\Ps_{\nu+1}$ denotes the set of admissible impulse paths as defined in Theorem \ref{DI::theo::path_Lyapunov}.
However, for this approach one can show that working with $\nu < T_{\max}$ is too restrictive and that, on the other hand, employing $\nu \geq T_{\max}$ often leads to a large computational burden.
\end{remark}

\subsection{Numerical Examples}\label{DI::sec::exa_ana}

\subsubsection{Nominal Stability}

Consider a family of standard discrete-time impulsive systems without inputs and outputs described by
\begin{equation}
\label{DI::eq::exa1}
x(t+1) = \left(\mat{cc}{1 & 0 \\ 0 & 1} + \frac{\beta}{100}\mat{cc}{-1 & - 1 \\ 1 & -1}\right) x(t),\quad
%
%
x(t_k+1) = \mat{cc}{0 & -10 \\ 0.1 & 0}x(t_k)
\end{equation}
for $t \in \N_0 \setminus \{t_1, t_2, \dots\}$, $k \in \N$ and for some parameter $\beta \in [0.1, 5]$. This system is motivated by an example demonstrating the conservatism of employing common quadratic Lyapunov functions for analyzing switched systems in continuous-time as given in \cite{DayMar99}.

We can now analyze these systems based on the specialization to stability analysis of our new tests and of those recapitulated in Section \ref{DI::sec::recap} for several instances of the parameter $\beta$ and of the dwell-time boundaries $T_{\min}, T_{\max}$.
Our numerical findings are illustrated in Fig.~\ref{DI::fig::nom} where we consider an equidistant grid of $50$ points for the parameter interval $[0.1, 5]$. For the IQC based results Theorem \ref{DI::coro::ana_RDT} and \ref{DI::coro::ana_RDT2}, we employ the filter as given in \eqref{DI::eq::filter_TF_SS} with length $\nu = 1$ and, for the path based approach in Theorem \ref{DI::theo::path_Lyapunov}, we consider paths of length $L = 11$. All appearing LMIs are solved via LMIlab \cite{GahNem95} in Matlab on a general purpose desktop computer (Intel Core i7, 4.0 GHz, 8 GB of RAM).

The left hand side of Fig.~\ref{DI::fig::nom} shows that we have obtained identical results for the stability tests based on lifting, the clock and IQCs with lifting as given in Theorems \ref{DI::theo::basic_Lyapunov}, \ref{DI::theo::clock} and \ref{DI::coro::ana_RDT2}, respectively.
We emphasize that only for the first two of these tests this is expected from the underlying theory and that this happens despite the fact that the test in Theorem~\ref{DI::coro::ana_RDT2} is in general more conservative than the one in Theorem~\ref{DI::theo::basic_Lyapunov}.
This figure also demonstrates that all of these three tests can be too conservative, since the path based approach in Theorem~\ref{DI::theo::path_Lyapunov} assures stability for even more parameter combinations, such as for $[T_{\min}, T_{\max}] = [6,9]$ where the other tests fail to guarantee stability. 

Finally, the right hand side of Fig.~\ref{DI::fig::nom} provides a comparison of the IQC based stability tests involving lifting and the clock. It shows that, in contrast to the standard tests in Theorems \ref{DI::theo::basic_Lyapunov} and \ref{DI::theo::clock}, both tests are not equivalent since the clock based approach in Theorem \ref{DI::coro::ana_RDT} yields more conservative results if compared to the one in Theorem \ref{DI::coro::ana_RDT2}.

\begin{figure}
\begin{minipage}{0.49\textwidth}
	\includegraphics[width=\textwidth, trim=15 0 30 20, clip]{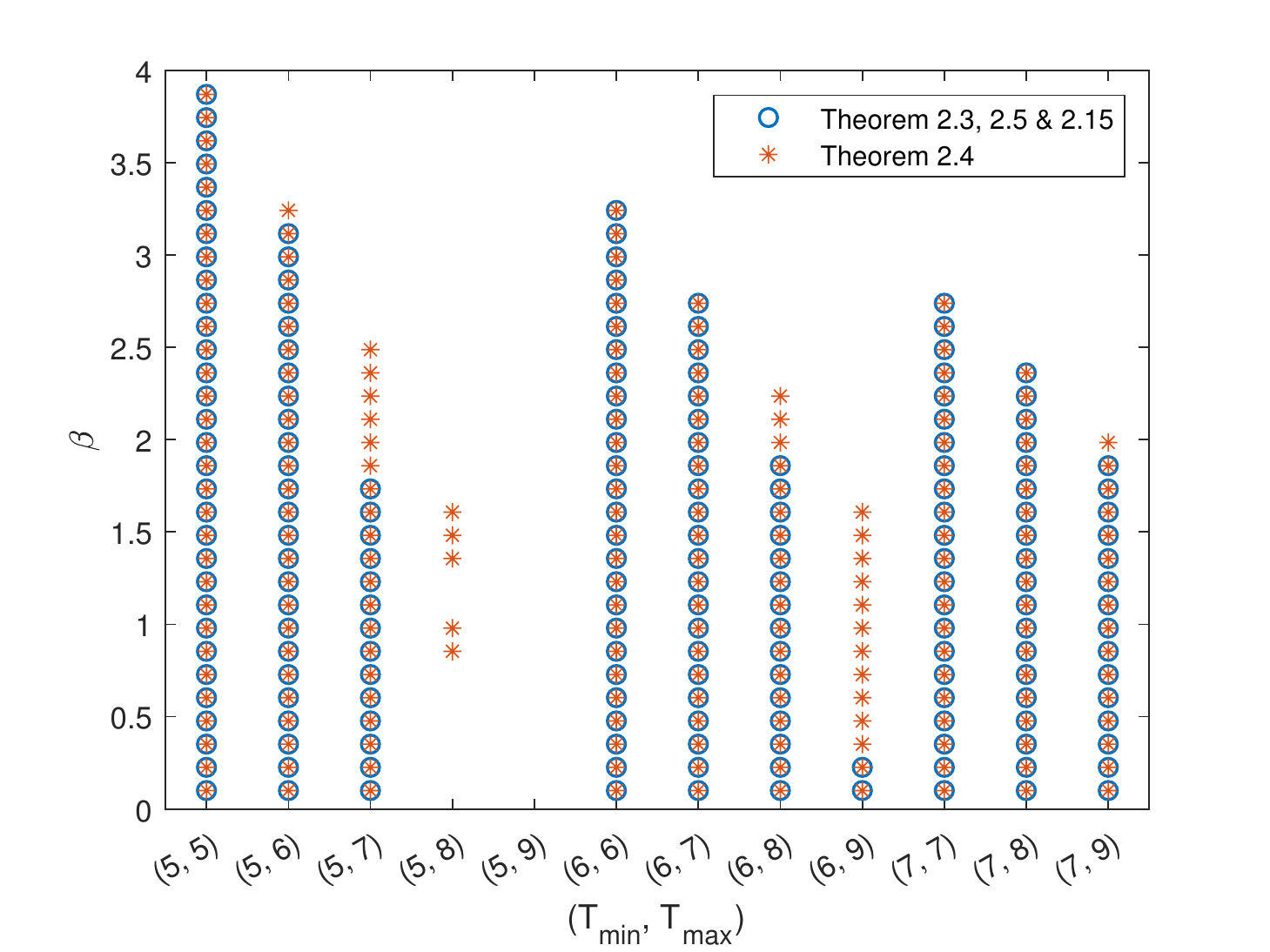}
\end{minipage}
\hfill
\begin{minipage}{0.49\textwidth}
	\includegraphics[width=\textwidth, trim=15 0 30 20, clip]{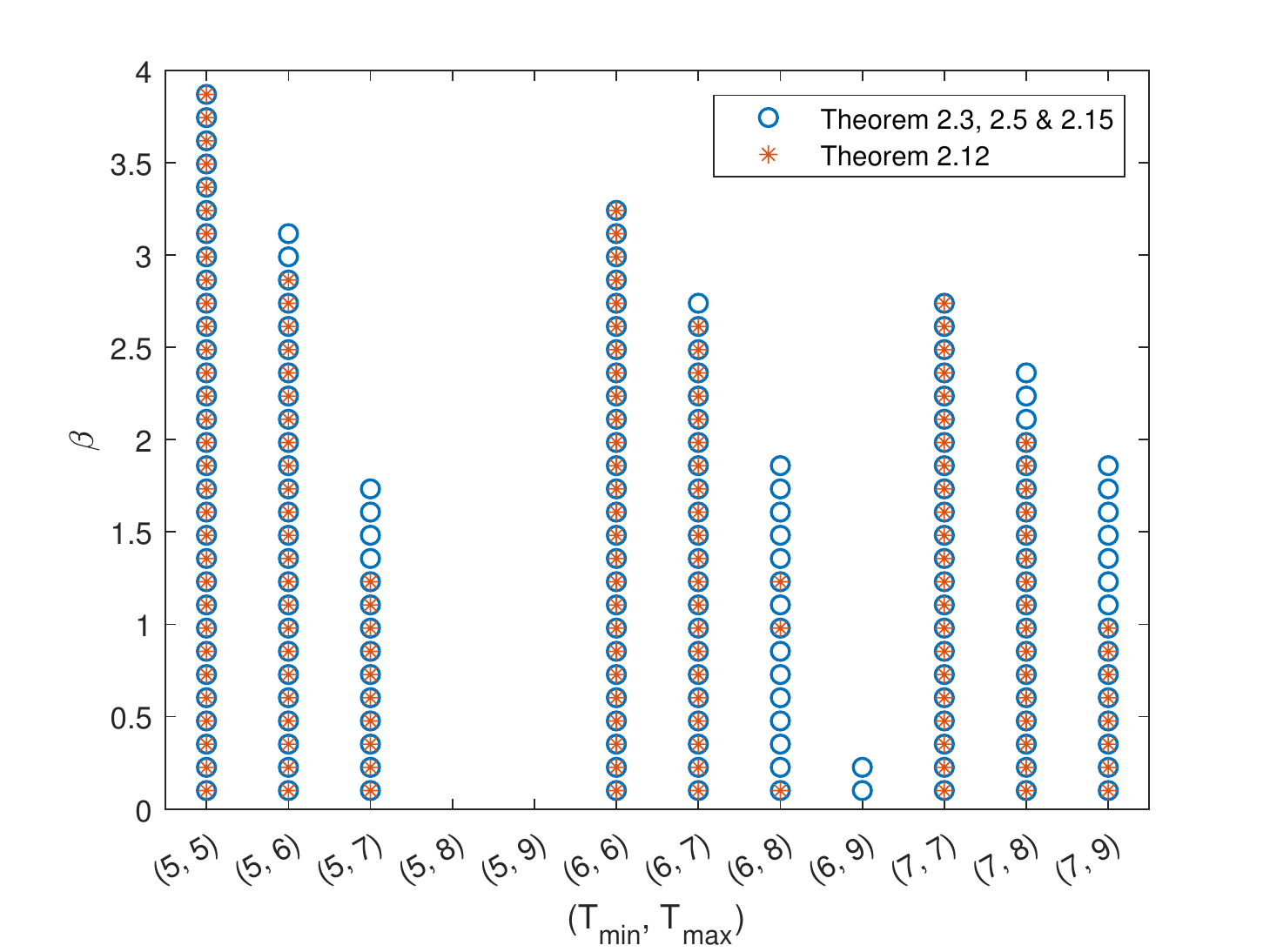}
\end{minipage}
\caption{Parameter combinations of $\beta$, $T_{\min}$ and $T_{\max}$ for which the specializations to stability analysis of the tests in Theorems \ref{DI::theo::basic_Lyapunov}, \ref{DI::theo::path_Lyapunov}, \ref{DI::theo::clock},  \ref{DI::coro::ana_RDT} and \ref{DI::coro::ana_RDT2} assure stability of the impulsive system \eqref{DI::eq::exa1}.}
\label{DI::fig::nom}
\end{figure}

\subsubsection{$H_\infty$ Performance}

As another illustration with some practical flavor, let us consider a simple model for a flexible satellite which is explained in \cite{FraPow10} and, here, modeled as follows with state $\t x = \col(\theta_2, \dot \theta_2, \theta_1, \dot \theta_1)$ and with constants $J_1 = 1$, $J_2 = 0.1$, $k = 0.091$ and $b = 0.04$:
\begin{equation}
\mat{c}{\dot {\t x}(t) \\ \hline v(t)}
= \mat{cccc|c:c}{0 & 1 & 0 & 0 & 0 & 0 \\ -\frac{k}{J_2} & -\frac{b}{J_2} & \frac{k}{J_2} & \frac{b}{J_2} & 1 & 0 \\ 0 & 0 & 0 & 1 & 0 & 0 \\ \frac{k}{J_1} & \frac{b}{J_1} & -\frac{k}{J_1} & -\frac{b}{J_1} & 0 & \frac{1}{J_1} \\ \hline 1 & 0 & 0 & 0 & 0 & 0}
\mat{c}{\t x(t) \\ \hline \t d(t) \\ u(t)}.
\label{DI::eq::sys_ex}
\end{equation}
We discretize this model with a sampling time of $0.01$ seconds and employ the standard $H_\infty$ design procedure to synthesize a (non-impulsive) dynamic output-feedback controller $K$ for the discretized system.
More concretely, we aim to render the closed-loop interconnection stable and to assure that the output $v$ nicely follows a given piecewise constant reference signal $r$ despite the presence of a disturbance $\t d$ and even if the control input $u$ is not too large.
To this end, we consider a standard weighted reference tracking configuration as depicted in Fig.~\ref{NSY::fig::tracking_config} with weights
\begin{equation*}
W_r = 1,\quad
W_d = 0.01, \quad
W_u = 0.1\teq{ and }
%
W_{err}(z)= \frac{0.5(z-0.9913)}{z-1}
\end{equation*}
and where $G$ denotes the discretization of \eqref{DI::eq::sys_ex}.
\begin{figure}
\begin{center}
	\includegraphics[width=0.8\textwidth]{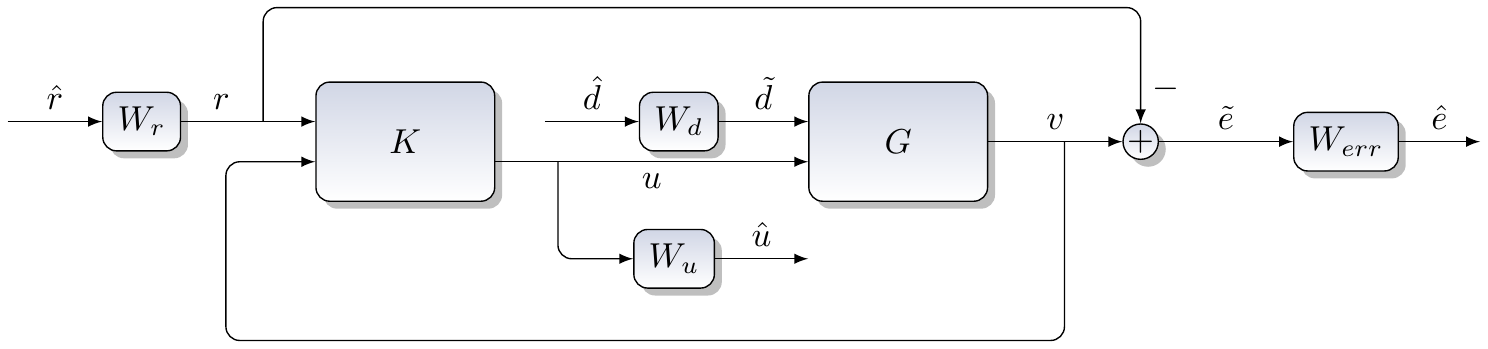}
\end{center}
\vspace{-2ex}
\caption{A standard weighted tracking configuration.}
\label{NSY::fig::tracking_config}
\end{figure}
Disconnecting the controller $K$ from this configuration results in a weighted open-loop system of the form
\begin{equation*}
\mat{c}{x(t+1) \\ \hline e(t) \\ \hdashline y(t)}\mat{c|c:c}{A & B_d & B_u \\ \hline C_e & D_{ed} & D_{eu} \\ \hdashline C_y & D_{yd} & 0}
\mat{c}{x(t) \\ \hline d(t) \\ \hdashline u(t)}
\end{equation*}
with signals
\begin{equation*}
%
e := \mat{c}{\h e \\ \h u},\quad
y := \mat{c}{v \\ r}, \quad
d := \mat{c}{\h r \\ \h d}
\teq{ as well as }u
\end{equation*}
and with easily computed describing matrices.
We then utilize \texttt{hinfsyn} from Matlab which results in a close-to-optimal controller $K$ that achieves a closed-loop energy gain of $0.7986$.

\vspace{0.5ex}

Let us now assume that, due to limited communication, the output $v$ of the system \eqref{DI::eq::sys_ex} can only be measured at times $t_0, t_1, \dots$ with $(t_k)_{k\in \N_0}$ satisfying \eqref{DI::eq::RDT} and that we insist on driving the previously obtained controller $K$ without any modifications by the reference $r$ and the piecewise constant signal
\begin{equation*}
\t v(t) := v(t_k) \teq{ for } t \in [t_k+1, t_{k+1}] \teq{ and }k\in \N_0.
\end{equation*}
In order to analyze the resulting closed-loop, note that the latter signal $\t v$ can be expressed as the output of the following impulsive system $H$ driven by $v$:
\begin{equation}
\label{DI::eq::ex_sys_hybrid}
\mat{c}{x_v(t+1) \\ \t v(t)} = \mat{cc}{I & 0 \\ I & 0}\mat{c}{x_v(t) \\ v(t)},\quad
\mat{c}{x_v(t_k+1) \\ \t v(t_k)} = \mat{cc}{0 & I \\ 0 & I} \mat{c}{x_v(t_k) \\ v(t_k)}
\end{equation}
for $t \in \N_0 \setminus \{t_1, t_2, \dots\}$ and $k \in \N$.
In particular, the resulting closed-loop then is an (interconnected) linear impulsive system that is depicted in Fig.~\ref{DI::fig::tracking_config_hybrid} and whose performance can be analyzed based on any of the available tests in Section \ref{DI::sec::recap} or on the new ones in Theorem \ref{DI::coro::ana_RDT} and \ref{DI::coro::ana_RDT2}.
Since the controller $K$ was designed with the aim to achieve a small energy gain for the interconnection in Fig.~\ref{NSY::fig::tracking_config}, we consider quadratic performance with the corresponding performance index $P_\ga := \smat{I & 0 \\ 0 & -\ga^2 I}$ and minimize over $\ga$. This yields (optimal) upper bounds on the actual energy gain which depend on the employed test and on the dwell-time range $[T_{\min}, T_{\max}]$.

\begin{figure}
\begin{center}
	\includegraphics[width=0.8\textwidth]{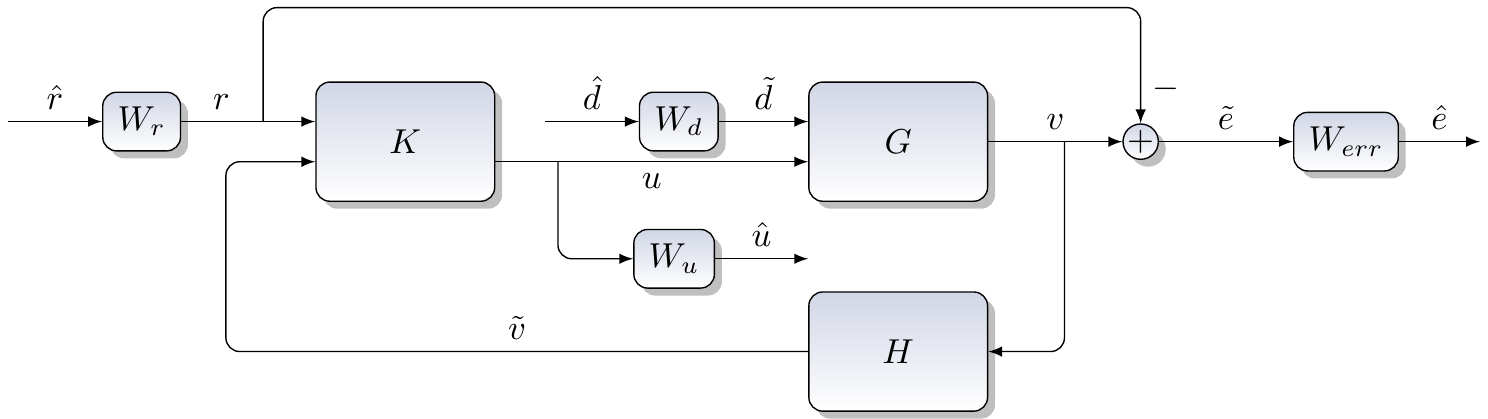}
\end{center}
\vspace{-2ex}
\caption{A weighted tracking configuration involving the impulsive component \eqref{DI::eq::ex_sys_hybrid}.}
\label{DI::fig::tracking_config_hybrid}
\end{figure}

\begin{figure}
\begin{minipage}{0.49\textwidth}
	\begin{center}
		\includegraphics[trim=20 0 30 20, clip, width=\textwidth]{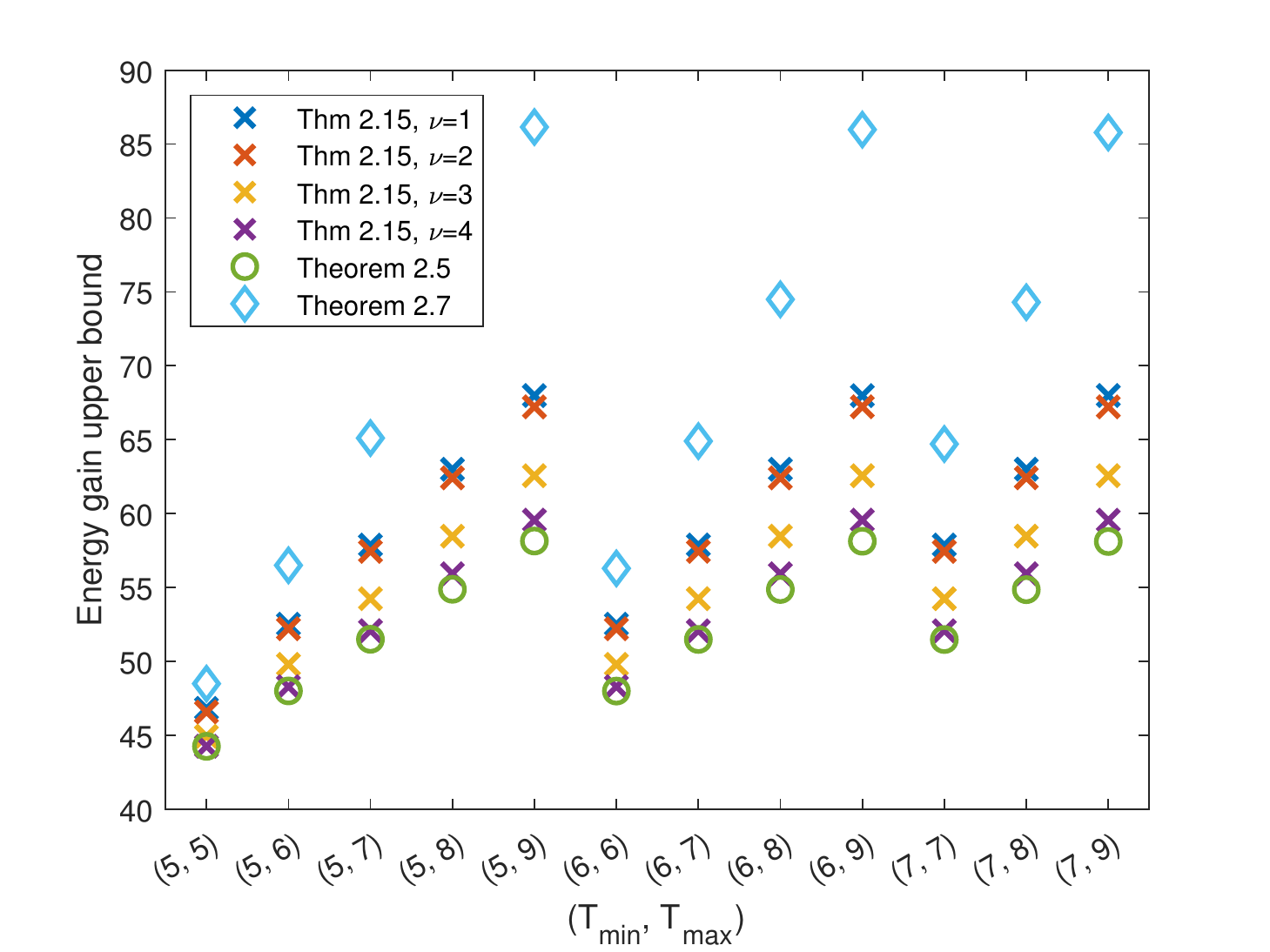}
	\end{center}
\end{minipage}
\begin{minipage}{0.49\textwidth}
	\begin{center}
		\includegraphics[trim=20 0 30 20, clip, width=\textwidth]{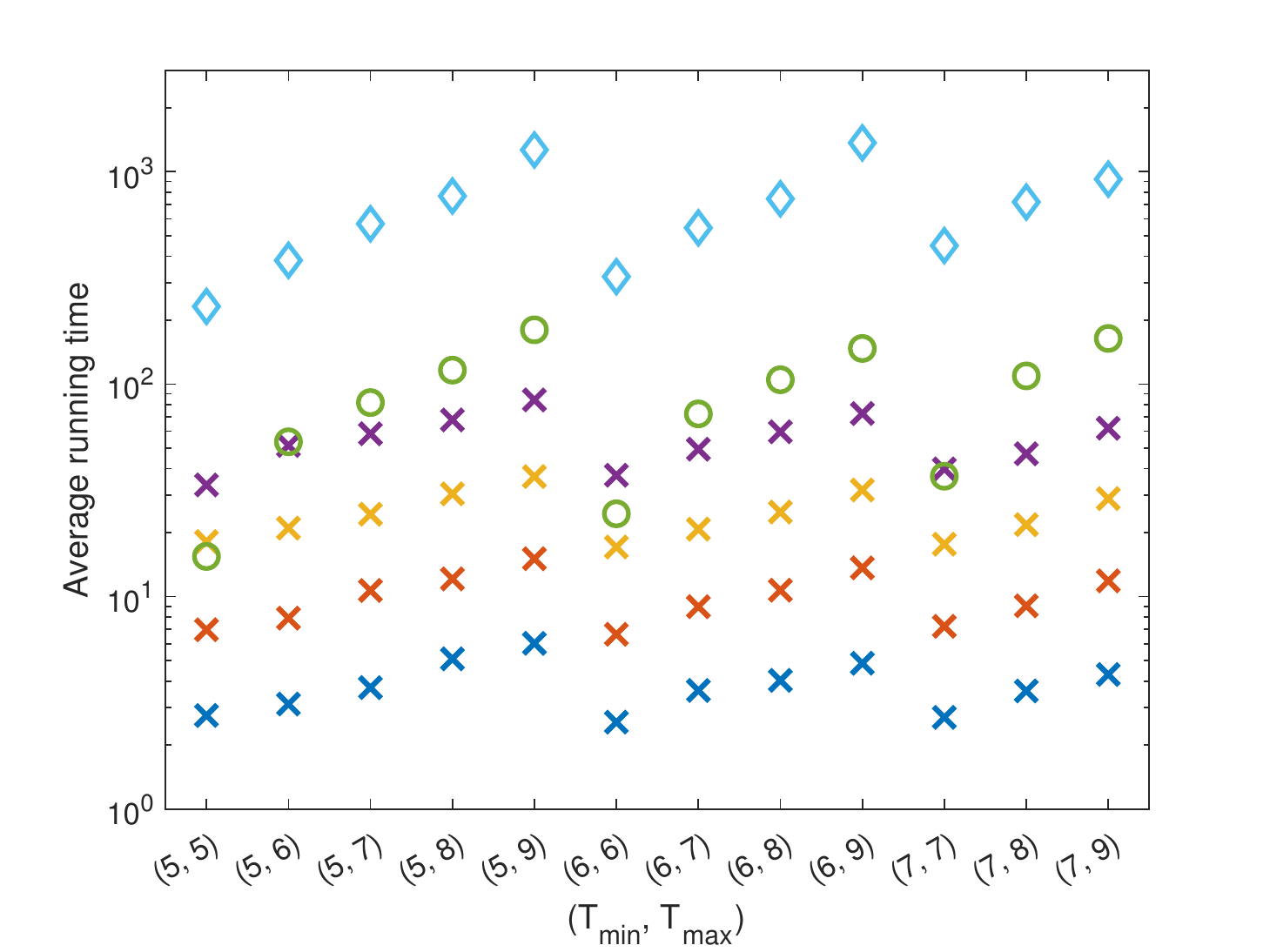}
	\end{center}
\end{minipage}
\vspace{-2ex}
\caption{Left: Numerically determined upper bounds on the energy gain of the interconnection in Fig.~\ref{DI::fig::tracking_config_hybrid} as obtained from Theorem \ref{DI::theo::clock}, \ref{DI::theo::clock_sv} and Theorem~\ref{DI::coro::ana_RDT2} with several filter lengths $\nu$. Right: Corresponding average running times in seconds within twenty runs.}
\label{DI::fig::energy_gain}
\end{figure}

On the left in Fig.~\ref{DI::fig::energy_gain} are depicted upper bounds on the energy gain for several pairs of $T_{\min}$ and $T_{\max}$ as numerically determined by the clock based performance test in Theorem \ref{DI::theo::clock}, the one involving common slack variables in Theorem \ref{DI::theo::clock_sv} as well as by the IQC and lifting based test in Theorem \ref{DI::coro::ana_RDT2}. In the latter test, we employ the filter \eqref{DI::eq::filter_TF_SS} with length $\nu \in \{1, \dots, 4\}$.
On the right in this figure are illustrated the corresponding average running times in seconds within twenty runs.

First of all, we see observe that the test in Theorem \ref{DI::coro::ana_RDT2} provides more conservative upper bounds than the one in Theorem \ref{DI::theo::clock} which is expected from Lemma \ref{DI::lem::conservativsm_estimate_lifting} and from the third statement of Theorem \ref{DI::theo::clock}.
However, if we increase the length of the filter Theorem \ref{DI::coro::ana_RDT2}, then this conservatism is reduced, and at $\nu = 4$ both tests provide nearly the same upper bounds. Moreover, we observe that the IQC based test is faster in almost all instances with $\nu \in \{1, \dots, 4\}$; naturally, this changes if we would further increase the filter length $\nu$.
Finally, the clock based test with common slack variables in Theorem \ref{DI::theo::clock_sv}, which forms the basis of some robust design approaches, is outperformed by the other tests in terms of guaranteed upper bounds and also in terms of the involved numerical burden.
This is one of the main reasons for performing robust design based on the IQC approach instead.
As a final side note, since the guaranteed closed-loop performance became rather poor if compared to the originally achieved one without limited communication, one should take the effort to design a new controller $K$ that takes the impulsive nature of the interconnection depicted in Fig.~\ref{DI::fig::tracking_config_hybrid} into account. This can for example be done as demonstrated in \cite{Hol22}.

\section{Estimator Design}\label{DI::sec::es}

For numerous classes of uncertainties, it has been shown in previous work that the problem of synthesizing robust estimators for uncertain systems can be convexified in case that these systems admit a linear fractional representation and if the properties of the uncertainties are captured by IQCs \cite{SchKoe08, ScoFro06, Vee15, GerDeo01, Ger99, VenSei16}. This section is motivated by the latter fact and serves to demonstrate that our IQC-based analysis results permit us to design non-impulsive estimators for impulsive systems by convex techniques.

\subsection{Problem Description}\label{DI::sec::prob_set_es}

For real matrices of appropriate dimensions, some initial condition $x(0)\in \R^n$, some generalized disturbance $d\in \ell_2^{n_d}$ and the impulsive operator $\Del$ defined in \eqref{DI::eq::sys::del} through a sequence of impulse instants $(t_k)_{k\in \N_0}$, we consider the impulsive system
\begin{equation}
	\mat{c}{x(t+1) \\ \hline  z(t) \\  v(t) \\  y(t)}
	= \mat{c|cc}{A & B_w & B_d  \\ \hline
		C_z & D_{zw} & D_{zd} \\
		C_v & D_{vw} & D_{vd}  \\
		C_y & D_{yw} & D_{yd}}
	\mat{c}{x(t) \\ \hline w(t) \\  d(t) }, \qquad
	w(t) = \Del(z)(t)
	\label{DI::eq::es_sys}
\end{equation}
for $t \in \N_0$. Here, the new signals $v$ and $y$ denote some output that we aim to estimate and the measured output which drives the estimator, respectively.

The precise goal in this section is the design of a \emph{non-impulsive} estimator $\Ec$ described by the LTI system
\begin{equation}
	\mat{c}{x_e(t+1) \\  u(t)} =
	\mat{cc}{A^e & B^e \\ C^e & D^e} \mat{c}{x_e(t)  \\ y(t)}
	\label{DI::eq::es_con}
\end{equation}
for $t \in \N_0$ by means of convex optimization, which takes the measured signal $y$ as its input and generates an optimal approximation $u$ of the signal $v$; in the sequel, we measure the approximation quality in terms of the energy gain from the disturbance input $d \in \ell_2^{n_d}$ to the estimation error
\begin{equation}
	\label{DI::eq::es_error}
	e := v - u.
\end{equation}
Note that this problem is much more challenging than designing an impulsive estimator that is aware of the time instances at which impulses occur as considered for example in \cite{MedLaw09, BerSan18, ConPer17}, since much less information about the system \eqref{DI::eq::es_sys} is available. Recall that knowledge of the sequence $(t_k)_{k\in \N_0}$ can be essential for the purpose of estimation because the behavior of some impulsive systems can change dramatically for different dwell-times $t_{k+1}-t_k+1$.

In order to design such a non-impulsive estimator, we consider the corresponding closed-loop interconnection of the estimator \eqref{DI::eq::es_con} with the system \eqref{DI::eq::es_sys} and with the performance output \eqref{DI::eq::es_error}. This interconnection can be expressed as
\begin{equation}
	\mat{c}{x_{cl}(t+1) \\ \hline z(t) \\ e(t)}
	=\mat{c|cc}{\Ac & \Bc_w & \Bc_d \\ \hline
		\Cc_z & \Dc_{zw} & \Dc_{zd} \\
		\Cc_e & \Dc_{ew} & \Dc_{ed}}
	\mat{c}{x_{cl}(t) \\ \hline w(t) \\ d(t)}, \qquad
	w(t) = \Del(z)(t)
	\label{DI::eq::es_cl}
\end{equation}
for $t\in \N_0$ and with the stacked state $x_{cl} = \smat{x\\ x_e}$ as well as the calligraphic closed-loop matrices given by
\begin{multline}
	\mat{cc|cc}{A  & 0 & B_w & B_d\\
		B^e C_y & A^e & B^e D_{yw}  & B^e D_{yd}\\ \hline
		C_z & 0 & D_{zw} & D_{zd}\\
		C_v -  D^e C_y & C^e & D_{vw} -  D^e D_{yw} & D_{vd} - D^e D_{yd}} \\
	= \mat{cc|cc}{A & 0 & B_w & B_d\\
		0 & 0 & 0 & 0 \\ \hline
		C_z & 0 & D_{zw} & D_{zd}\\
		C_v & 0 & D_{vw} & D_{vd}} +
	\mat{cc}{0 & 0 \\ I & 0 \\ \hline 0 & 0 \\ 0 & -I}
	\mat{cc}{A^e & B^e \\ C^e & D^e}
	\mat{cc|cc}{0 & I & 0 & 0 \\ C_y & 0 & D_{yw} & D_{yd}}.
	\label{DI::eq::matrices_es_cl}
\end{multline}
This interconnection is also depicted in Fig.~\ref{DI::fig::es} where $G_{oi} := (A, B_i, C_o, D_{oi})$ for $o \in \{z, v, y\}$ and $i \in \{w, d\}$ denote the transfer matrices corresponding to \eqref{DI::eq::es_sys}.

\begin{figure}
	\begin{center}
		\includegraphics[]{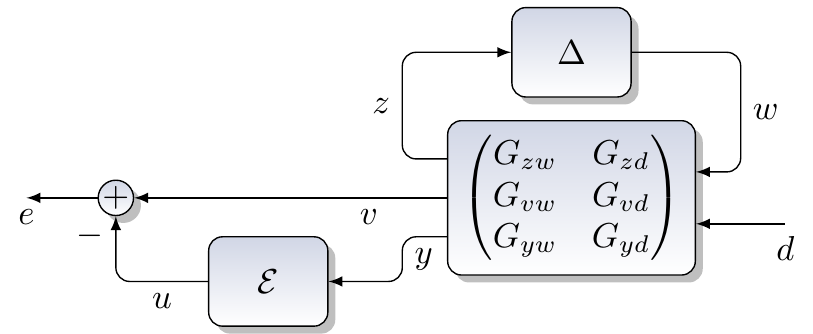}
	\end{center}
	\caption{Block diagram corresponding to the closed-loop interconnection \eqref{DI::eq::es_cl}.}
	\label{DI::fig::es}
\end{figure}

Since the description \eqref{DI::eq::es_cl} is exactly of the form \eqref{DI::eq::sys}, we can apply any of the performance tests in Corollary~\ref{DI::coro::ana_ADT} or in Theorems \ref{DI::theo::basic_Lyapunov}, \ref{DI::theo::path_Lyapunov} \ref{DI::theo::clock}, \ref{DI::theo::clock_sv}, \ref{DI::coro::ana_RDT}, \ref{DI::coro::ana_RDT2} for its analysis depending on the available information on the sequence of impulse instants. For brevity, we assume that this sequence is known to satisfy \eqref{DI::eq::RDT} and only formulate the extension of Theorem \ref{DI::coro::ana_RDT2}. Since we are interested in the energy gain, note that we consider the corresponding quadratic performance index $P_\ga := \smat{I & 0 \\ 0 & -\ga^2 I}$ instead of a general one.

\begin{corollary}
	\label{DI::lem::es_ana}
	Let $\Psi$ be some filter as in \eqref{DI::eq::filter}. Then the closed-loop interconnection \eqref{DI::eq::es_cl} achieves an energy gain smaller than $\ga$ for all sequences $(t_k)_{k\in \N_0}$ with \eqref{DI::eq::RDT}
	if there exist matrices $\Xc \in \S^{n_\xi + n + n_e}$, $Z \in \S^{n_\xi}$ and $M\in \S^{m}$ satisfying \eqref{DI::lem::eq::dynamic_multiplier_RDT2},
	\begin{subequations}
		\label{DI::lem::eq::es_main}
		\begin{equation}
			\dlabel{DI::lem::eq::es_maina}{DI::lem::eq::es_mainb}
			\Xc - \mat{cc}{Z & 0 \\ 0 & 0}\cg 0
			\teq{ and }
			(\bullet)^\top \mat{cc|c:c}{\Xc & 0 & &\\ 0 & -\Xc & &\\ \hline &&  M& \\ \hdashline &&& P_\ga} \mat{cc|cc}{A_\Psi & B_\Psi \smat{\Cc_z \\ 0} & B_\Psi \smat{\Dc_{zw} \\ I_{n_w}} & B_\Psi\smat{\Dc_{zd} \\ 0}
				\\ 0 & \Ac & \Bc_w & \Bc_d
				\\ I& 0 & 0 & 0\\ 0& I & 0 & 0\\ \hline
				C_\Psi & D_\Psi \smat{\Cc_z \\ 0} & D_\Psi \smat{\Dc_{zw} \\ I_{n_w}} & D_\Psi \smat{\Dc_{zd} \\ 0} \\ \hdashline
				0 & \Cc_e & \Dc_{ew} & \Dc_{ed} \\ 0 & 0& 0 & I} \cl 0.
		\end{equation}
	\end{subequations}
\end{corollary}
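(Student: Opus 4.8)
The plan is to recognize the closed-loop interconnection \eqref{DI::eq::es_cl} as a particular instance of the generic interconnection \eqref{DI::eq::sys} and then to invoke Theorem~\ref{DI::coro::ana_RDT2} verbatim. Concretely, I would identify the state with the stacked vector $x_{cl} = \smat{x \\ x_e}$, so that $n$ in \eqref{DI::eq::sys} gets replaced by $n + n_e$, and match describing matrices via $A \leftrightarrow \Ac$, $B_w \leftrightarrow \Bc_w$, $B \leftrightarrow \Bc_d$, $C_z \leftrightarrow \Cc_z$, $C \leftrightarrow \Cc_e$, $D_{zw} \leftrightarrow \Dc_{zw}$, $D_{zd} \leftrightarrow \Dc_{zd}$, $D_{ew} \leftrightarrow \Dc_{ew}$, $D \leftrightarrow \Dc_{ed}$, while the disturbance $d$, the performance output $e$, and the uncertainty channel $z \mapsto w = \Del(z)$ carry over unchanged. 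Under these substitutions, \eqref{DI::lem::eq::es_mainb} is literally \eqref{DI::theo::eq::maina} with $X := \Xc$ and $P := P_\ga$ (and the filtered-system matrices built from the calligraphic closed-loop data), \eqref{DI::lem::eq::es_maina} is the positivity constraint $\Xc - \smat{Z & 0 \\ 0 & 0} \cg 0$, and \eqref{DI::lem::eq::dynamic_multiplier_RDT2} is unchanged because it only involves the filter data $(A_\Psi, B_\Psi, C_\Psi, D_\Psi)$, $M$ and $Z$, not the system matrices.

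The next step is to check that the hypotheses of Theorem~\ref{DI::coro::ana_RDT2} are met. Besides the three groups of LMIs just accounted for, the only standing requirement is that the performance index have a positive semidefinite left upper block; for $P_\ga = \smat{I & 0 \\ 0 & -\ga^2 I}$ one has $Q = I \cge 0$, so this holds trivially. Theorem~\ref{DI::coro::ana_RDT2} then yields, for every sequence $(t_k)_{k\in\N_0}$ satisfying \eqref{DI::eq::RDT}, that the closed loop \eqref{DI::eq::es_cl} is well-posed, stable, and achieves quadratic performance with index $P_\ga$; I would stress here that no separate well-posedness assumption on \eqref{DI::eq::es_cl} is needed, since well-posedness is part of the conclusion of Theorem~\ref{DI::coro::ana_RDT2}. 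Finally, I would translate quadratic performance with index $P_\ga$ into the stated energy-gain bound in the usual way: by Definition~\ref{DI::def::stab} there is $\eps > 0$ with $\|e\|_{\ell_2}^2 - \ga^2\|d\|_{\ell_2}^2 = \sum_{t=0}^\infty (\bullet)^\top P_\ga \smat{e(t) \\ d(t)} \leq -\eps\|d\|_{\ell_2}^2$ for $x_{cl}(0) = 0$ and all $d \in \ell_2^{n_d}$, hence $\|e\|_{\ell_2} \leq \sqrt{\ga^2 - \eps}\,\|d\|_{\ell_2}$, i.e.\ an energy gain strictly below $\ga$.

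There is essentially no genuine obstacle in this proof: it is an application of an already-established theorem to a system that was built precisely to fit its hypotheses. Accordingly, the only real content of the write-up is the block-matrix bookkeeping in the first paragraph, and the one point worth making explicit is that the $(1,1)$ block of $P_\ga$ is positive semidefinite — this is what permits Theorem~\ref{DI::coro::ana_RDT2} to be applied and, through the condition $Q \cge 0$, also drives the well-posedness argument inside its proof. I would also remark (without using it in the proof) that the closed-loop matrices in \eqref{DI::eq::matrices_es_cl} depend affinely on the estimator data $(A^e,B^e,C^e,D^e)$, which is the feature exploited in the subsequent convex synthesis.
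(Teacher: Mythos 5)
Your proposal is correct and follows exactly the route the paper intends: the corollary is stated as an immediate specialization of Theorem~\ref{DI::coro::ana_RDT2} to the closed loop \eqref{DI::eq::es_cl}, which by construction has the form \eqref{DI::eq::sys}, with performance index $P_\ga$ (whose left upper block $Q=I\cge 0$ satisfies the standing assumption) so that quadratic performance translates into the strict energy-gain bound. Your matrix identifications and the final $\|e\|_{\ell_2}\leq\sqrt{\ga^2-\eps}\,\|d\|_{\ell_2}$ step are exactly the bookkeeping the paper leaves implicit.
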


Note that this analysis result cannot efficiently be used for designing an estimator \eqref{DI::eq::es_con} since simultaneously searching for its describing matrices and the matrices $X$, $Z$, $M$ satisfying the above inequalities is a difficult nonconvex problem. This is resolved in the following subsection.

\subsection{Main Synthesis Result}\label{DI::sec::main_es}

Our main result in this section is a novel convex characterization of LTI estimators \eqref{DI::eq::es_con} satisfying the closed-loop analysis inequalities in Corollary \ref{DI::lem::es_ana}.
Since the latter criteria are based on IQCs, our synthesis result is related, e.g., to the ones in \cite{SchKoe08, Vee15, ScoFro06} that employ IQCs as well, but for non-impulsive uncertain systems.
For example in \cite{SchKoe08}, convexification rests upon a factorization of the involved IQC multiplier $\Pi$ and the convexifying parameter transformation \cite{MasOha98, SchGah97} which is well-known in the LMI literature.
Instead, we rely on the elimination lemma \cite[Theorem 2]{Hel99} which is restated here for the reader's convenience.

\begin{lemma}[Elimination Lemma]
	\label{DI::lem::elimination}
	Let $P \in \S^l$ and let $W\in \R^{l\times k}$, $U\in \R^{l\times m}$, $V \in \R^{n\times k}$. Then there exists a matrix $K \in \R^{m \times n}$ such that
	\begin{equation*}
		\label{LMI_with_Z}
		(\bullet)^\top P (W + U KV) \cl 0
	\end{equation*}
	holds if and only if
	\begin{equation*}
		\label{LMI_without_Z}
		(\bullet)^\ast P WV_\perp \cl 0
		\teq{ and }
		(\bullet)^\top P \mat{cc}{W & U}
		\text{ has at least $k$ negative eigenvalues.}
	\end{equation*}
\end{lemma}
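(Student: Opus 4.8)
The plan is to treat the two implications separately; necessity is a direct substitution, whereas sufficiency requires constructing $K$ and is where the real work lies. For the ``only if'' part, assume $K\in\R^{m\times n}$ satisfies $(\bullet)^\top P(W+UKV)\cl 0$ and set $T:=W+UKV$. Since $VV_\perp=0$ we have $TV_\perp=WV_\perp$, so a congruence of the negative definite matrix $T^\top PT$ with the full-column-rank matrix $V_\perp$ gives $(\bullet)^\top P\,(WV_\perp)=(\bullet)^\top P\,(TV_\perp)\cl 0$, which is the first asserted inequality. For the second, factor $T=\mat{cc}{W & U}\smat{I\\ KV}$; the right factor has full column rank $k$ and satisfies $\smat{I\\ KV}^\top\big[(\bullet)^\top P\mat{cc}{W & U}\big]\smat{I\\ KV}=T^\top PT\cl 0$, so by the Courant--Fischer min--max principle the symmetric matrix $(\bullet)^\top P\mat{cc}{W & U}$ is negative definite on a $k$-dimensional subspace and hence has at least $k$ negative eigenvalues.

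For the ``if'' part I would begin by normalizing the data. Factoring $U$ and $V$ through bases of $\mathrm{im}(U)$ and of the row space of $V$ (and adjusting $K$ accordingly) lets us assume $U$ has full column rank and $V$ full row rank, without changing either hypothesis or $V_\perp$; an invertible change of coordinates on $\R^k$, absorbed into $W$ and $P$ by congruence, then puts $V$ into the form $\mat{cc}{I & 0}$, so that $V_\perp=\smat{0\\ I}$, and we partition $W=\mat{cc}{W_1 & W_2}$ conformably. The target inequality becomes $(\bullet)^\top P\mat{cc}{W_1+UK & W_2}\cl 0$, whose $(2,2)$ block equals the fixed matrix $W_2^\top PW_2$, which is negative definite by the first hypothesis. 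Taking the Schur complement with respect to this block eliminates $W_2$: the inequality is equivalent to $(\bullet)^\top\tilde P\,(W_1+UK)\cl 0$ with $\tilde P:=P-PW_2(W_2^\top PW_2)^{-1}W_2^\top P$, and applying the same Schur complement (together with the Haynsworth inertia additivity formula) to $(\bullet)^\top P\mat{ccc}{W_2 & W_1 & U}$ shows that the second hypothesis is equivalent to $(\bullet)^\top\tilde P\mat{cc}{W_1 & U}$ having at least $n$ negative eigenvalues. Everything has thus been reduced to the case $V=I$.

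To finish, a further change of coordinates on $\R^l$ (a congruence on $\tilde P$) brings $U$ to the form $\smat{I\\ 0}$; writing $W_1=\smat{W_1^a\\ W_1^b}$ conformably, prescribing $K$ is the same as prescribing $L:=W_1^a+K$ freely, so one has to produce an $L$ with $\smat{L\\ W_1^b}^\top\tilde P\smat{L\\ W_1^b}\cl 0$. Rewriting this as $\smat{I\\ L}^\top H\smat{I\\ L}\cl 0$, where $H:=\smat{(W_1^b)^\top\tilde P_{22}W_1^b & \bullet\\ \tilde P_{12}W_1^b & \tilde P_{11}}$ in the obvious block notation, and checking by one more congruence that $\mathrm{In}_-(H)$ equals the number of negative eigenvalues of $(\bullet)^\top\tilde P\mat{cc}{W_1 & U}$ and is therefore at least $n$, the remaining task is to find $L\in\R^{m\times n}$ whose graph $\mathrm{im}\,\smat{I\\ L}$, an $n$-dimensional subspace of $\R^{n+m}$, is one on which $H$ is negative definite. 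Such an $L$ exists because the $n$-dimensional $H$-negative subspaces of $\R^{n+m}$ form a nonempty open subset of the Grassmannian $\mathrm{Gr}(n,n+m)$, whereas the subspaces meeting $\{0\}\oplus\R^m$ nontrivially form a proper closed subvariety (it excludes, e.g., $\R^n\oplus\{0\}$), so the two sets necessarily intersect. I expect the main obstacle to be neither of these individual ideas but the bookkeeping that links them: verifying that every normalization preserves solvability, and above all that the inertia count is transported correctly through the two Schur complements and the congruences, so that the hypothesis $\mathrm{In}_-\big((\bullet)^\top P\mat{cc}{W & U}\big)\ge k$ turns into exactly $\mathrm{In}_-(H)\ge n$. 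Since all of this uses only Sylvester's law of inertia, the Haynsworth formula, and Schur complements with respect to the genuinely invertible block $W_2^\top PW_2$, no nonsingularity assumption on $P$ itself is needed.
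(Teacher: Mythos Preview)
The paper does not give its own proof of this lemma; it merely restates \cite[Theorem 2]{Hel99} for convenience and uses it as a black box in the proof of Theorem~\ref{DI::theo::es_main}. So there is nothing in the paper to compare against.

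Your proof sketch is correct. The necessity direction is clean: congruence with $V_\perp$ gives the first condition, and writing $W+UKV=\mat{cc}{W&U}\smat{I\\ KV}$ together with Courant--Fischer gives the inertia bound. For sufficiency, your sequence of reductions (full-rank $U,V$; putting $V=\mat{cc}{I&0}$ by a column change on $\R^k$; Schur complement on the invertible block $W_2^\top PW_2$ combined with Haynsworth to transport the inertia hypothesis; putting $U=\smat{I\\0}$ by a coordinate change on $\R^l$) is the standard route and each step preserves both solvability and the two hypotheses. The final existence step---finding an $n$-dimensional $H$-negative subspace of $\R^{n+m}$ that is a graph over the first $n$ coordinates---is handled by your Grassmannian argument, which is valid because the graph locus is open and \emph{dense} in $\mathrm{Gr}(n,n+m)$, and a dense open set meets every nonempty open set. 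A slightly more pedestrian way to phrase this last step, avoiding any algebraic-geometry vocabulary, is: pick any basis $\smat{A\\B}$ of an $H$-negative $n$-plane, observe that $A+\eps I$ is invertible for all but finitely many $\eps$, and that $\smat{A+\eps I\\B}$ still spans an $H$-negative subspace for $\eps$ small; then $L:=B(A+\eps I)^{-1}$ works.

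The only place that deserves an extra sentence in a full write-up is the rank reduction on $U$: when you replace $U$ by a full-column-rank factor $U_0$ with $U=U_0S$, you must check that the inertia hypothesis on $\mat{cc}{W&U}^\top P\mat{cc}{W&U}$ implies the same for $\mat{cc}{W&U_0}^\top P\mat{cc}{W&U_0}$. This follows because the former equals $F^\top\big(\mat{cc}{W&U_0}^\top P\mat{cc}{W&U_0}\big)F$ with $F=\diag(I,S)$ surjective, and any $k$-dimensional negative subspace for the former is mapped injectively by $F$ (a nonzero kernel vector would have zero quadratic form) to a $k$-dimensional negative subspace for the latter. You gesture at this under ``bookkeeping,'' and indeed it is routine, but it is the one spot where a reader might pause.
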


In \cite{Vee15}, the more common variant of the elimination lemma as given in \cite[Theorem 3]{Hel99} is employed together with a factorization of the IQC multiplier. This variant requires slightly more structure of the involved outer factors, but fits perfectly well, e.g., to standard $H_\infty$-control with a nice control theoretic interpretation.

Let us emphasize that, due to the application of the above version of the elimination lemma, the proof of our main result does neither require a factorization of the involved IQC multiplier nor an application of the dualization lemma \cite{SchWei00}. This is in stark contrast to \cite{SchKoe08, Vee15} and leads to a much cleaner line of reasoning.

\begin{theorem}
	\label{DI::theo::es_main}
	Let $\t V$ be a basis matrix of $\ker(C_y, D_{yw}, D_{yd})$ and define $V := \diag(I_{n_\xi}, \t V)$.
	Then there exists an estimator \eqref{DI::eq::es_con} for the system \eqref{DI::eq::es_sys} such that the analysis inequalities \eqref{DI::lem::eq::es_main} are satisfied for their closed-loop interconnection \eqref{DI::eq::es_cl} if and only if there exist $X\in \S^{n_\xi+n}$, $Y\in \S^{n_\xi+n}$, $Z \in \S^{n_\xi}$ and $M \in \S^m$ satisfying \eqref{DI::lem::eq::dynamic_multiplier_RDT2},
	\begin{subequations}
		\begin{equation}
			\dlabel{DI::theo::eq::es_maina}{DI::theo::eq::es_mainb}
			\mat{cc}{\t X & \t Y \\ \t Y & \t Y} \cg 0, \quad
			(\bullet)^\top \mat{cc|c:c}{X & 0 & &\\ 0 & -X & &\\ \hline &&  M& \\ \hdashline &&& P_\ga} \mat{cc|cc}{A_\Psi & B_\Psi \smat{C_z \\ 0} & B_\Psi \smat{D_{zw} \\ I_{n_w}} & B_\Psi\smat{D_{zd} \\ 0}
				\\ 0 & A & B_w & B_d
				\\ I& 0 & 0 & 0\\ 0& I & 0 & 0\\ \hline
				C_\Psi & D_\Psi \smat{C_z \\ 0} & D_\Psi \smat{D_{zw} \\ I_{n_w}} & D_\Psi \smat{D_{zd} \\ 0} \\ \hdashline
				0 & C_v & D_{vw} & D_{vd} \\ 0 & 0& 0 & I}V \cl 0
		\end{equation}
		and
		\begin{equation}
			\label{DI::theo::eq::es_mainc}
			(\bullet)^\top \mat{cc|c:c}{Y & 0 & &\\ 0 & -Y & &\\ \hline &&  M& \\ \hdashline &&& -\ga^2I} \mat{cc|cc}{A_\Psi & B_\Psi \smat{C_z \\ 0} & B_\Psi \smat{D_{zw} \\ I_{n_w}} & B_\Psi\smat{D_{zd} \\ 0}
				\\ 0 & A & B_w & B_d
				\\ I& 0 & 0 & 0\\ 0& I & 0& 0\\ \hline
				C_\Psi & D_\Psi \smat{C_z \\ 0} & D_\Psi \smat{D_{zw} \\ I_{n_w}} & D_\Psi \smat{D_{zd} \\ 0} \\ \hdashline
				0 & 0& 0 & I} \cl 0,
		\end{equation}
	\end{subequations}
	where $\t X := X - \diag(Z, 0)$ and $\t Y := Y - \diag(Z, 0)$.
\end{theorem}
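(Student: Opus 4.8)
The plan is to remove the estimator parameters from the closed-loop analysis inequalities of Corollary~\ref{DI::lem::es_ana} by the elimination lemma and then to re-parametrise the remaining certificate $\Xc$ via Schur complements. The constraint \eqref{DI::lem::eq::dynamic_multiplier_RDT2} and the matrices $Z,M$ occur on both sides of the asserted equivalence and are independent of the estimator, so I would fix them. By \eqref{DI::eq::matrices_es_cl} the closed-loop matrices depend affinely on $K:=\smat{A^e & B^e\\ C^e & D^e}$, and inserting this into \eqref{DI::lem::eq::es_mainb} shows that its outer factor equals $\mathbf{W}+\mathbf{U}K\mathbf{V}$ for $\mathbf{W},\mathbf{U},\mathbf{V}$ independent of $K$: since $K$ enters only the block rows for $x_e(t+1)$ and $e(t)$, the matrix $\mathbf{U}$ is the selector with $I$ in the $x_e(t+1)$-row and $-I$ in the $e(t)$-row, and $\mathbf{V}=\smat{0&0&I&0&0\\ 0&C_y&0&D_{yw}&D_{yd}}$ extracts $x_e(t)$ and the measurement $y(t)$ from the column signal $(\xi,x,x_e,w,d)$. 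Thus \eqref{DI::lem::eq::es_mainb} reads $(\bullet)^\top\mathcal{P}(\mathbf{W}+\mathbf{U}K\mathbf{V})\cl 0$ with $\mathcal{P}:=\diag(\Xc,-\Xc,M,P_\ga)$, which is exactly the situation addressed by Lemma~\ref{DI::lem::elimination}.

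For a fixed $\Xc$ satisfying \eqref{DI::lem::eq::es_maina} I would eliminate $K$ with Lemma~\ref{DI::lem::elimination}. A basis of $\ker\mathbf{V}$ consists of the vectors with $x_e=0$ and $(C_y,D_{yw},D_{yd})\smat{x\\w\\d}=0$, and a short computation using \eqref{DI::eq::matrices_es_cl} shows that on this subspace the whole $x_e$-channel vanishes while $e=C_vx+D_{vw}w+D_{vd}d$; hence the kernel condition $(\bullet)^\top\mathcal{P}\mathbf{W}\mathbf{V}_\perp\cl 0$ is exactly \eqref{DI::theo::eq::es_mainb}, with $X$ the leading $(\xi,x)$-block of $\Xc$ and with $V=\diag(I_{n_\xi},\t V)$, $\t V$ a basis of $\ker(C_y,D_{yw},D_{yd})$. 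For the inertia condition, partition $\Xc=\smat{\Xc_{11}&\Xc_{12}\\ \Xc_{12}^\top&\Xc_{22}}$ along $(\xi,x)$ versus $x_e$, so $\Xc_{11}=X$ and, by \eqref{DI::lem::eq::es_maina}, $\Xc_{22}\cg 0$. Then $\mathbf{U}^\top\mathcal{P}\mathbf{U}=\diag(\Xc_{22},I)\cg 0$, and a Schur complement of $(\bullet)^\top\mathcal{P}\smat{\mathbf{W}&\mathbf{U}}$ with respect to this block, i.e., completing squares in the $x_e(t+1)$- and $e(t)$-variables, is negative definite precisely when the required inertia holds; this cancels the contribution of the $e(t)$-row (so the rows built from $C_v,D_{vw},D_{vd}$ and the $+e^\top e$ term of $P_\ga$ vanish, leaving $-\ga^2I$ in place of $P_\ga$) and turns the next-state Lyapunov block $X$ into the Schur complement $Y:=X-\Xc_{12}\Xc_{22}^{-1}\Xc_{12}^\top$, while the current-state side still carries a block $-\Xc_{22}\cl 0$ in the $x_e(t)$-coordinate. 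A second Schur complement with respect to $-\Xc_{22}$ then eliminates $x_e(t)$ and replaces the current-state block $-X$ by $-Y$, and the result is exactly \eqref{DI::theo::eq::es_mainc}.

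It remains to trade $\Xc$ for the pair $(X,Y)$. With $\t X:=X-\diag(Z,0)$ and $\t Y:=Y-\diag(Z,0)$, a Schur complement of \eqref{DI::lem::eq::es_maina} with respect to $\Xc_{22}$ gives $\Xc_{22}\cg 0$ and $\t Y\cg 0$, together with $\t X-\t Y=\Xc_{12}\Xc_{22}^{-1}\Xc_{12}^\top$; conversely, given $X,Y$ with $\smat{\t X&\t Y\\ \t Y&\t Y}\cg 0$ (equivalently $\t Y\cg 0$ and $\t X-\t Y\cg 0$), one factors $\t X-\t Y=\Xc_{12}\Xc_{22}^{-1}\Xc_{12}^\top$ with $\Xc_{12}$ square nonsingular and $\Xc_{22}\cg 0$, and $\Xc:=\smat{X&\Xc_{12}\\ \Xc_{12}^\top&\Xc_{22}}$ then satisfies \eqref{DI::lem::eq::es_maina}, has leading block $X$ and Schur complement $Y$. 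Combining the three steps yields the equivalence: from an estimator (taken full-order with $\Xc_{12}$ nonsingular, which is no loss by padding with stable modes and perturbing $\Xc$ slightly, exploiting strictness) one reads off $X:=\Xc_{11}$ and $Y:=\Xc_{11}-\Xc_{12}\Xc_{22}^{-1}\Xc_{12}^\top$ and obtains \eqref{DI::lem::eq::dynamic_multiplier_RDT2}, \eqref{DI::theo::eq::es_maina}, \eqref{DI::theo::eq::es_mainb}, \eqref{DI::theo::eq::es_mainc}; conversely, from a solution $(X,Y,Z,M)$ one builds $\Xc$ as above, whose kernel condition is \eqref{DI::theo::eq::es_mainb} and inertia condition \eqref{DI::theo::eq::es_mainc}, so Lemma~\ref{DI::lem::elimination} returns a suitable $K$ and hence the estimator \eqref{DI::eq::es_con}.

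I expect the inertia step of the second part to be the main obstacle: one must choose the two Schur-complement transformations so that (i) the $e$-channel collapses, turning $P_\ga=\diag(I,-\ga^2I)$ into $-\ga^2I$ and deleting the rows built from $C_v,D_{vw},D_{vd}$; (ii) the surviving Lyapunov block is exactly the Schur complement $Y$ and appears symmetrically on both the current- and next-state sides; and (iii) the bookkeeping of negative eigenvalues is consistent, so that ``at least $k$ negative eigenvalues'' in Lemma~\ref{DI::lem::elimination} (with $k$ the number of columns of $\mathbf{W}$) indeed becomes the strict LMI \eqref{DI::theo::eq::es_mainc}. The other ingredients, namely the collapse of the kernel condition to \eqref{DI::theo::eq::es_mainb}, the Schur-complement equivalences for \eqref{DI::lem::eq::es_maina}, and the completion of $\Xc$ from $(X,Y)$ together with the standard full-order/perturbation technicality, are routine, and this route sidesteps both the factorisation of the IQC multiplier and the dualisation used in \cite{SchKoe08,Vee15}.
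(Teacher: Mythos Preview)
Your proposal is correct and follows essentially the same route as the paper: both write the outer factor of \eqref{DI::lem::eq::es_mainb} as $W+UKV$, apply Lemma~\ref{DI::lem::elimination}, read off \eqref{DI::theo::eq::es_mainb} from the kernel condition, and reduce the inertia condition to \eqref{DI::theo::eq::es_mainc} via $U^\top\mathcal{P}U=\diag(\Xc_{22},I)\cg 0$ and Schur complements, with the coupling \eqref{DI::theo::eq::es_maina} handled exactly as you describe. The only cosmetic differences are that the paper replaces your ``second Schur complement in $-\Xc_{22}$'' by the equivalent congruence $\diag\!\big(\smat{I&0\\-X_{22}^{-1}X_{21}&I},I,I\big)$ and then drops the decoupled $-X_{22}$ block, and that for sufficiency it picks the explicit completion $\Xc=\smat{X&Y-X\\Y-X&X-Y}$ rather than a generic factorisation of $\tilde X-\tilde Y$.
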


\begin{proof}
	{\itshape Necessity:} Without loss of generality, we can assume that $n_e \geq n+n_\xi$ holds; otherwise, one can replace the describing matrices of the estimator
	\begin{equation*}
		\mat{cc}{A^e & B^e \\ C^e & \bullet}
		\teq{ by }\mat{cc|c}{A^e & 0 & B^e \\ 0 & 0 & 0 \\ \hline C^e &  0 & \bullet}
	\end{equation*}
	and finds that the closed-loop analysis inequalities \eqref{DI::lem::eq::es_main} still holds for $\Xc$ replaced by $\diag(\Xc, \alpha I)$ with $\alpha > 0$ sufficiently large. Moreover, we can slightly perturb $\Xc = \smat{X & X_{12} \\ X_{21} & X_{22}}$ with $X \in \S^{n_\xi+n}$ such that these inequalities remain valid and such that $X_{21}$ has full column rank.

	\begin{figure}
		\begin{minipage}{0.49\textwidth}
			\begin{center}
				\includegraphics[width=\textwidth]{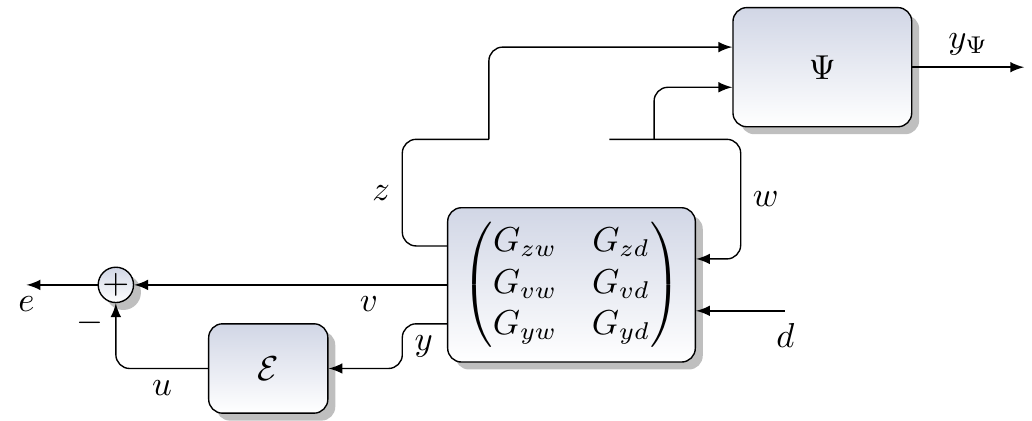}
			\end{center}
		\end{minipage}
		\begin{minipage}{0.49\textwidth}
			\begin{center}
				\includegraphics[width=\textwidth]{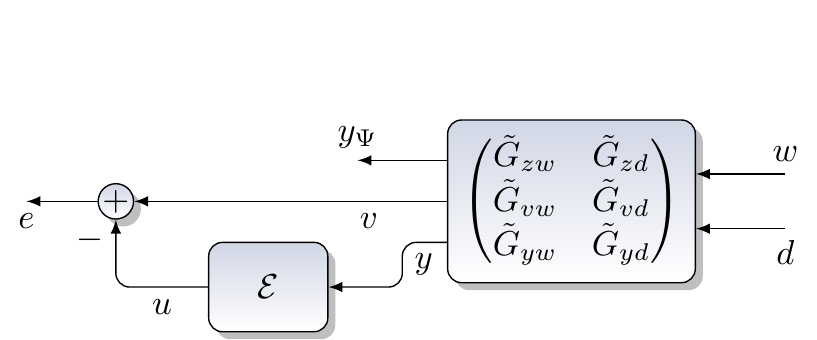}
			\end{center}
		\end{minipage}
		\caption{Two equivalent block diagrams corresponding to the augmentation of the known linear part of the closed-loop interconnection \eqref{DI::eq::es_cl} by some filter $\Psi$.}
		\label{DI::fig::es_augm}
	\end{figure}
	
	Next,
	recall that
	\begin{equation*}
		\mat{cc|cc}{A_\Psi & B_\Psi \smat{\Cc_z \\ 0} & B_\Psi \smat{\Dc_{zw} \\ I_{n_w}} & B_\Psi\smat{\Dc_{zd} \\ 0}
			\\ 0 & \Ac & \Bc_w & \Bc_d \\
			\hline
			C_\Psi &  D_\Psi \smat{\Cc_z \\ 0} &  D_\Psi \smat{\Dc_{zw} \\ I_{n_w}} &  D_\Psi \smat{\Dc_{zd} \\ 0} \\ \hdashline
			0 & \Cc_e & \Dc_{ew} & \Dc_{ed}}
		= \mat{ccc|cc}{A_\Psi & B_{\Psi}\smat{C_z \\ 0} & 0 & B_{\Psi}\smat{D_{zw} \\ I} & B_{\Psi}\smat{D_{zd} \\ 0} \\
			0 & A & 0 & B_w & B_d \\
			0 & B^e C_y & A^e & B^e D_{yw} & B^e D_{yd} \\ \hline
			C_\Psi &  D_{\Psi}\smat{C_z \\ 0} & 0 &  D_{\Psi}\smat{D_{zw} \\ I} &  D_{\Psi}\smat{D_{zd} \\ 0} \\ \hdashline
			0 & C_v - D^e C_y & C^e & D_{vw} - D^e D_{yw} & D_{vd} - D^eD_{yd}}
	\end{equation*}
	denote the describing matrices of the closed-loop's linear part augmented by the filter $\Psi$, which corresponds to the block diagram on the left in Fig.~\ref{DI::fig::es_augm}. This figure also illustrates that this augmented closed-loop still corresponds to an estimation problem; the related matrices are actually given by \eqref{DI::eq::matrices_es_cl} if we replace the describing matrices of the original system \eqref{DI::eq::es_sys} by the following ones of the filtered  system:
	\begin{equation*}
		\mat{c|cc}{\t A & \t B_w & \t B_d \\ \hline \t C_z & \t D_{zw} & \t D_{zd} \\ \t C_v & \t D_{vw} & \t D_{vd} \\ \t C_y & \t D_{yw} & \t D_{yd}}
		= \mat{cc|cc}{A_\Psi & B_\Psi \smat{C_z \\ 0} & B_\Psi \smat{D_{zw} \\ I} & B_\Psi \smat{D_{zd} \\ 0} \\ 0 & A & B_w & B_d \\ \hline  C_\Psi &  D_\Psi \smat{C_z \\ 0} &  D_\Psi \smat{D_{zw} \\ I} &  D_\Psi \smat{D_{zd} \\ 0} \\
			0 & C_v & D_{vw} & D_{vd} \\
			0 & C_y & D_{yw} & D_{yd}}.
	\end{equation*}
	Hence, we can express the outer factor of the closed-loop analysis inequality \eqref{DI::lem::eq::es_mainb} as
	\begin{equation*}
		\arraycolsep=3pt
		\mat{cc|cc}{A_\Psi & B_\Psi \smat{\Cc_z \\ 0} & B_\Psi \smat{\Dc_{zw} \\ I_{n_w}} & B_\Psi\smat{\Dc_{zd} \\ 0}
			\\ 0 & \Ac & \Bc_w & \Bc_d
			\\ I& 0 & 0 & 0\\ 0& I & 0&0\\ \hline
			C_\Psi &  D_\Psi \smat{\Cc_z \\ 0} &  D_\Psi \smat{\Dc_{zw} \\ I_{n_w}} &  D_\Psi \smat{\Dc_{zd} \\ 0} \\ \hdashline
			0 & \Cc_e & \Dc_{ew} & \Dc_{ed} \\ 0 & 0& 0 & I}
		=\underbrace{\mat{cc|cc}{\t A & 0 & \t B_w & \t B_d\\
				0 & 0 & 0 & 0 \\ I & 0 & 0 & 0 \\ 0 & I & 0 & 0 \\ \hline
				\t C_z & 0 & \t D_{zw} & \t D_{zd}\\
				\t C_v & 0 & \t D_{vw} & \t D_{vd} \\ 0 & 0 & 0 & I}}_{=: W} +
		\underbrace{\mat{cc}{0 & 0 \\ I & 0 \\ 0 & 0 \\ 0 & 0 \\ \hline 0 & 0 \\ 0 & -I\\ 0 & 0}}_{=:U}
		\underbrace{\mat{cc}{A^e & B^e \\ C^e & D^e}}_{=: K}
		\underbrace{\mat{cc|cc}{0 & I & 0 & 0 \\ \t C_y & 0 & \t D_{yw} & \t D_{yd}}}_{=:V}.
	\end{equation*}
	Next, let us partition the annihilator $\t V$ accordingly to $(C_y, D_{yw}, D_{yd})$ and note that
	\begin{equation*}
		\h V:= \mat{cc}{I_{n_\xi} & 0\\ 0_{n\times n_\xi} & \t V_1 \\ 0_{n_v \times n_\xi} & 0 \\ 0_{n_w \times n_\xi} & \t V_2 \\ 0_{n_d \times n_\xi} & \t V_3} \teq{ is a basis matrix of }\ker(V).
	\end{equation*}
	Finally, let us define $P := \diag(\Xc, -\Xc, M, P_\ga)$, the middle matrix of \eqref{DI::lem::eq::es_mainb}, and suppose that $W$ has $k$ columns.
	This permits us to apply the elimination lemma \ref{DI::lem::elimination} and allows us to infer from \eqref{DI::lem::eq::es_mainb} that
	\begin{equation*}
		(\bullet)^\top PW \h V \cl 0
		\teq{ and that } (\bullet)^\top P \mat{cc}{W & U} \text{ has at least $k$ negative eigenvalues.}
	\end{equation*}
	By recalling the partition of $\Xc = \smat{X & X_{12} \\ X_{21} & X_{22}}$ and a direct computation, one finds that $(\bullet)^\top P W \h V \cl 0$ is exactly the inequality \eqref{DI::theo::eq::es_mainb}.
	
	Next, one observes that $U^\top P U = \smat{X_{22} & 0 \\ 0 & I} \cg 0$ holds by \eqref{DI::lem::eq::es_maina}. Hence, the matrix $(\bullet)^\top P(W\ \ U)$ has the correct number of negative eigenvalues if and only if its Schur complement w.r.t. $U^\top PU$ satisfies
	\begin{equation*}
		S := W^\top P W - W^\top PU(U^\top P U)^{-1} U^\top P W = W^\top [P - PU(U^\top PU)^{-1}U^\top P] W \cl 0.
	\end{equation*}
	This actually reads as
	\begin{equation*}
		W^\top \diag\left(\mat{cc}{X_{12} - X_{12}X_{22}^{-1}X_{21} & 0 \\ 0 & 0 }, \mat{cc}{-X_{11} & - X_{12} \\ -X_{21} & - X_{22}}, M, 0, -\ga^2 I\right) W\cl 0.
	\end{equation*}
	After a congruence transformation with $\diag\left(\smat{I & 0 \\ -X_{22}^{-1}X_{21} & I}, I_{n_w}, I_{n_d} \right)$ and due to the structure of the matrix $W$, we obtain 
	\begin{equation*}
		W^\top \diag\left(\mat{cc}{X_{12} - X_{12}X_{22}^{-1}X_{21} & 0 \\ 0 & 0 }, \mat{cc}{-(X_{11} -X_{12}X_{22}^{-1}X_{21}) & 0 \\ 0 & - X_{22}}, M, 0, -\ga^2 I\right) W \cl 0
	\end{equation*}
	Due to this identity, the particular structure of the matrix $W$ and by $-X_{22} \cl 0$, one observes that $S \cl 0$ holds if and only if  \eqref{DI::theo::eq::es_mainc} holds for $Y := X - X_{12}X_{22}^{-1}X_{21}$.

	Since \eqref{DI::lem::eq::dynamic_multiplier_RDT2} holds by assumption, it remains to show that the coupling condition \eqref{DI::theo::eq::es_maina} holds for our choice of $X$ and $Y$. Indeed, $\t Y = Y - \smat{Z & 0 \\ 0 & 0}\cg 0$ follows from applying the Schur complement to
	\eqref{DI::lem::eq::es_maina}. By taking another Schur complement, the coupling condition \eqref{DI::theo::eq::es_maina} is then equivalent to
	\begin{equation*}
		0 \cl \t X - \t Y\t Y^{-1}\t Y = \t X - \t Y = X_{21}^\top X_{22}^{-1} X_{21}.
	\end{equation*}
	The latter is inequality is true by $X_{22} \cg 0$ and since $X_{21}$ has full column rank.
	
	\vspace{1ex}
	
	{\itshape Sufficiency:} This follows essentially from reversing the arguments and leads in a constructive fashion to an estimator having at most McMillian degree $n + n_\xi$. It remains to recall that, for given matrices $X$ and $Y$ with \eqref{DI::theo::eq::es_maina}, the matrix
	\begin{equation*}
		\Xc := \mat{cc}{X& X_{12} \\ X_{21} & X_{22}}:=\mat{cc}{X & Y- X \\ Y -X & X-Y}
		\teq{ satisfies } X - X_{12}X_{22}^{-1}X_{21} =Y
		\teq{ and }\eqref{DI::lem::eq::es_maina}
	\end{equation*}
	Indeed, from \eqref{DI::theo::eq::es_maina} we infer $0 \cl \t X- \t Y = X - Y$ by Schur and, by taking another Schur complement, \eqref{DI::lem::eq::es_maina} is then equivalent to
	\begin{equation*}
		0 \cl X - \smat{Z & 0 \\ 0 & 0} + (Y - X) = Y - \smat{Z & 0 \\ 0 & 0} = \t Y.
	\end{equation*}
	The latter inequality is true by \eqref{DI::theo::eq::es_maina}.
\end{proof}

\subsection{An Alternative Result}\label{DI::sec::es_alternative}

Before providing a numerical example, let us provide an alternative design result that is a discrete-time version of \cite[Theorem 3.11]{Hol22} for the purpose of comparing both design results. This result stems from the clock based quadratic performance test in Theorem \ref{DI::theo::clock} and relies on the introduction of (partially) common slack variables similarly is in Theorem \ref{DI::theo::clock_sv}. A conceptually related estimator design approach involving such slack variables can be found, e.g., in \cite{GerDeo02}.
This alternative design result deals with an open-loop impulsive system with description
\begin{equation}
	\label{DI::eq::es_sys_alternative}
	\mat{c}{x(t+1) \\ v(t) \\ y(t)}
	= \mat{cc}{A & B_d \\ C_v & D_{vd} \\ C_y & D_{yd} } \mat{c}{x(t) \\ d(t)},\quad
	\mat{c}{x(t_k+1) \\ v(t_k) \\ y(t_k)}
	= \mat{cc}{A_J & B_{Jd} \\ C_{Jv} & D_{Jvd}  \\ C_{Jy} & D_{Jyd}} \mat{c}{x(t_k) \\ d(t_k)}
\end{equation}
for all $t \in \N_0 \setminus \{t_1, t_2, \dots \}$ and all $k \in \N$ and reads as follows.

\begin{theorem}
	\label{DI::theo::es_main_svar}
	There exists an estimator \eqref{DI::eq::es_con} for the impulsive system \eqref{DI::eq::es_sys_alternative} such that their closed-loop interconnection with output $e = v - u$ achieves an energy gain smaller than $\ga$ for all sequences $(t_k)_{k\in \N_0}$ with \eqref{DI::eq::RDT} if there exist matrices $\Xb_0, \dots, \Xb_{T_{\max}} \in \S^{2n}$, $G_0, \dots G_{T_{\max}-1}$, $H_0, \dots, H_{T_{\max}-1}$, $S_0, \dots, S_{T_{\max}-1}$, $G_{JT_{\min}}, \dots, G_{JT_{\max}}$, $H_{JT_{\min}}, \dots , H_{JT_{\max}}$, $S_{JT_{\min}}, \dots, S_{JT_{\max}}\in \R^{n\times n}$ and $\smat{K & L \\ M & N} \in \R^{(n+n_u) \times (n+n_y)}$ satisfying
	\begin{subequations}
		\label{DI::theo::eq::es_lmis_svar}
		\begin{equation}
			S_k - G_k = S_{Jl} - G_{Jl} \text{ for all }j \in [T_{\min}, T_{\max}]\cap \N_0, \quad \Xb_k \cg 0,
		\end{equation}
		\begin{equation}
			(\bullet)^\top \mat{ccc|c}{0 & I & 0 & 0 \\ I & \Xb_{k+1} - \Gb_k - \Gb_k^\top & 0 & 0 \\ 0 & 0 & -\Xb_{k} & 0 \\ \hline 0 & 0 & 0 & P_\ga}
			\mat{ccc}{0 & \Ab_k & \Bb_k \\ I & 0 & 0 \\ 0 & I & 0 \\ \hline 0 & \Cb_k & \Db_k \\ 0 & 0 & I} \cl 0,
		\end{equation}
		\begin{equation}
			(\bullet)^\top \mat{ccc|c}{0 & I & 0 & 0 \\ I & \Xb_0 - \Gb_{Jk} - \Gb_{Jk}^\top & 0 & 0 \\ 0 & 0 & -\Xb_{k} & 0 \\ \hline 0 & 0 & 0 & P_\ga}
			\mat{ccc}{0 & \Ab_{Jk} & \Bb_{Jk} \\ I & 0 & 0 \\ 0 & I & 0 \\ \hline 0 & \Cb_{Jk} & \Db_{Jk} \\ 0 & 0 & I} \cl 0
		\end{equation}
	\end{subequations}
	for all indices $k$ contained in $[0, T_{\max}-1]\cap \N_0$, $[0, T_{\max}]\cap \N_0$, $[0, T_{\max}-1]\cap \N_0$ and $[T_{\min}, T_{\max}]\cap \N_0$, respectively.
	Here, the matrices $\smat{\Ab_k & \Bb_k \\ \Cb_k & \Db_k}$, $\Gb_k$, $\smat{\Ab_{Jk} & \Bb_{Jk} \\ \Cb_{Jk} & \Db_{Jk}}$ and $\Gb_{Jk}$ are defined as
	\begin{equation*}
		\mat{cc|c}{H_k A & H_k A & H_k B_d \\ G_k A & G_k A & G_k B_d \\ \hline C_v & C_v & D_v}
		+ \mat{cc}{0 & 0 \\ I & 0 \\ \hline 0 & -I}
		\mat{cc}{K & L \\ M & N}
		\mat{cc|c}{I & 0 & 0 \\ C_y & C_y & D_{yd}}, \quad
		\mat{cc}{H_k & H_k \\ S_k & G_k},
	\end{equation*}
	and
	\begin{equation*}
		\mat{cc|c}{H_{Jk} A_J & H_{Jk} A_J & H_{Jk} B_{Jd} \\ G_{Jk} A_J & G_{Jk} A_J & G_{Jk} B_{Jd} \\ \hline C_{Jv} & C_{Jv} & D_{Jvd}}
		+ \mat{cc}{0 & 0 \\ I & 0 \\ \hline 0 & -I}
		\mat{cc}{K & L \\ M & N}
		\mat{cc|c}{I & 0 & 0 \\ C_{Jy} & C_{Jy} & D_{Jyd}}
		\teq{ and }
		\mat{cc}{H_{Jk} & H_{Jk} \\ S_{Jk} & G_{Jk}},
	\end{equation*}
	respectively. Finally, if the inequalities \eqref{DI::theo::eq::es_lmis_svar} are feasible, an estimator \eqref{DI::eq::es_con} with the desired properties is obtained, e.g., by choosing
	\begin{equation*}
		\mat{cc}{A^e & B^e \\ C^e & D^e} := \mat{cc}{S_1 - G_1 & 0 \\ 0 & I}^{-1}\mat{cc}{K  & L \\ M & N}.
	\end{equation*}
\end{theorem}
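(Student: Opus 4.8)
The plan is to reduce the claim to the clock-and-slack-variable analysis test of Theorem~\ref{DI::theo::clock_sv}, applied to the closed-loop interconnection of the non-impulsive estimator \eqref{DI::eq::es_con} with the impulsive plant \eqref{DI::eq::es_sys_alternative}, by means of a suitable block restriction of the slack variables together with an exact linearizing change of variables --- the same device used for the continuous-time predecessor \cite[Theorem~3.11]{Hol22}.

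First I would form the closed-loop explicitly. With stacked state $x_{cl} = \col(x, x_e)$ and error $e = v - u$, its flow part is governed by $\smat{A & 0 \\ B^e C_y & A^e}$, $\smat{B_d \\ B^e D_{yd}}$, $(C_v - D^e C_y,\ C^e)$, $D_{vd} - D^e D_{yd}$, and its jump part by the very same expressions with $A, B_d, C_v, C_y, D_{vd}, D_{yd}$ replaced by $A_J, B_{Jd}, C_{Jv}, C_{Jy}, D_{Jvd}, D_{Jyd}$; crucially, the \emph{same} estimator matrices $\smat{A^e & B^e \\ C^e & D^e}$ enter the flow and the jump part since the estimator is non-impulsive, and --- exactly as in \eqref{DI::eq::matrices_es_cl} --- all closed-loop matrices depend affinely on them. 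As \eqref{DI::eq::es_sys_alternative} carries no $\Del$-channel, this closed-loop is a plain impulsive system of the form \eqref{DI::eq::sys_standard}. Hence, by Theorem~\ref{DI::theo::clock_sv} --- in the version where the slack matrices are allowed to depend on the index, which by the remark following that theorem renders it equivalent to Theorem~\ref{DI::theo::clock} --- the closed-loop achieves an energy gain below $\ga$, i.e.\ quadratic performance with $P_\ga$ using $Q = I \cge 0$, provided there exist $\Xb_k \cg 0$ together with index-dependent slack matrices solving the dilated flow and jump LMIs over all admissible indices.

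The heart of the argument is to linearize these dilated LMIs in the estimator unknowns. I would restrict the $2n\times 2n$ slack matrices to the block patterns built from $H_k, S_k, G_k$ (flow) and $H_{Jk}, S_{Jk}, G_{Jk}$ (jump) displayed in the theorem, and introduce the change of variables $\smat{K & L \\ M & N} := \smat{S_k - G_k & 0 \\ 0 & I}\smat{A^e & B^e \\ C^e & D^e}$. After the accompanying congruence/coordinate transformation of the closed-loop realization, a lengthy but elementary block computation shows that every estimator-dependent term in the flow LMI aggregates into the single affine term $\smat{0 & 0 \\ I & 0 \\ 0 & -I}\smat{K & L \\ M & N}\smat{I & 0 & 0 \\ C_y & C_y & D_{yd}}$, so that the LMI becomes precisely the second inequality of \eqref{DI::theo::eq::es_lmis_svar} with the stated $\smat{\Ab_k & \Bb_k \\ \Cb_k & \Db_k}$ and $\Gb_k$; the jump LMI turns into the third one in the same way, and $\Xb_k \cg 0$ is retained verbatim. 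Two facts make this substitution legitimate and reversible. First, for $\smat{K & L \\ M & N}$ to be well-defined simultaneously across all flow \emph{and} jump indices, the factor $S_k - G_k$ must not depend on the index --- this is exactly the coupling constraint $S_k - G_k = S_{Jl} - G_{Jl}$. Second, $S_k - G_k$ must be nonsingular: restricting the outer factor of the dilated LMI to its first block column yields $\Xb_{k+1} - \Gb_k - \Gb_k^\top \cl 0$, whence $\Gb_k + \Gb_k^\top \cg \Xb_{k+1} \cg 0$, so $\Gb_k$ is nonsingular and, by a block column operation on its displayed structure, so is $S_k - G_k$. Running the computation backwards then shows that, given any feasible tuple for \eqref{DI::theo::eq::es_lmis_svar} (in which $S_1 - G_1$ coincides with every $S_k - G_k$ and $S_{Jl} - G_{Jl}$ and is invertible), the estimator defined by $\smat{A^e & B^e \\ C^e & D^e} := \smat{S_1 - G_1 & 0 \\ 0 & I}^{-1}\smat{K & L \\ M & N}$, together with the $\Xb_k$ and the structured slack matrices, satisfies the index-dependent clock-slack analysis LMIs for the closed-loop, so that the closed-loop has an energy gain below $\ga$ for all $(t_k)_{k\in\N_0}$ with \eqref{DI::eq::RDT}.

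The step I expect to be the main obstacle is precisely this linearization: verifying that, under the imposed slack patterns and after the accompanying congruence, \emph{all} estimator-dependent terms of the dilated flow and jump LMIs collapse exactly into the advertised matrices $\Ab_k, \Bb_k, \Cb_k, \Db_k, \Gb_k$ (and their jump analogues) --- elementary but bulky block algebra in which one must reproduce the repeated columns, the signs, and the exact placement of the $S_k - G_k$ factor. Everything else --- forming the closed-loop, invoking Theorem~\ref{DI::theo::clock_sv}, and the nonsingularity and consistency arguments around $S_1 - G_1$ --- is routine in light of the results recalled earlier.
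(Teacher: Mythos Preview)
The paper does not actually supply a proof of this theorem: it introduces the result as ``a discrete-time version of \cite[Theorem~3.11]{Hol22}'' and, in the paragraph following the statement, only records the key structural fact that feasibility of \eqref{DI::theo::eq::es_lmis_svar} yields the clock-and-slack analysis criteria of Theorem~\ref{DI::theo::clock_sv} for the closed-loop, with index-dependent slack matrices of the form $\smat{G_k & S_1 - G_1 \\ H_k - G_k & -(S_1 - G_1)}$ (and the analogous jump slacks) sharing a common second block column.

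Your proposal is correct and follows precisely this route: form the closed-loop impulsive system, invoke the index-dependent version of Theorem~\ref{DI::theo::clock_sv}, impose the block-structured slacks, and linearize via $\smat{K & L \\ M & N} = \smat{S_k - G_k & 0 \\ 0 & I}\smat{A^e & B^e \\ C^e & D^e}$, with the coupling $S_k - G_k = S_{Jl} - G_{Jl}$ ensuring consistency and the dilated LMI forcing nonsingularity of $S_1 - G_1$. The only point worth aligning with the paper is that the slack pattern $\Gb_k = \smat{H_k & H_k \\ S_k & G_k}$ you quote is the \emph{synthesis}-side object appearing in \eqref{DI::theo::eq::es_lmis_svar}, whereas the paper's remark displays the \emph{analysis}-side closed-loop slack obtained after the congruence; your ``accompanying congruence/coordinate transformation'' is exactly what connects the two, so there is no discrepancy.
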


\vspace{1ex}

Note that the inequalities \eqref{DI::theo::eq::es_lmis_svar} aren't LMIs, but this trouble can easily be overcome
by applying the Schur complement.
Moreover, feasibility of the inequalities \eqref{DI::theo::eq::es_lmis_svar} implies that the clock and slack variable based analysis criteria in Theorem \ref{DI::theo::clock_sv} are satisfied for the corresponding closed-loop interconnection. The slack variables in \eqref{DI::theo::eq::clockSV_Lyapunov_LMIsb} depend on the index $k$ and are structured as
\begin{equation*}
	\mat{cc}{G_k & S_1 - G_1 \\ H_k - G_k & -(S_1 - G_1)},
\end{equation*}
i.e., they have a constant/common second block column; the slack variables in \eqref{DI::theo::eq::clockSV_Lyapunov_LMIsc} are structured analogously with an identical second block column.
Finally, if the inequalities \eqref{DI::theo::eq::es_lmis_svar} are feasible, the obtained estimator \eqref{DI::eq::es_con} will have at most McMillian degree $n$, which can be much smaller than the one of the estimator resulting from Theorem \ref{DI::theo::es_main}.
However, the most important benefit of our new Theorem \ref{DI::theo::es_main} is its flexibility in allowing for extensions to systems involving multiple types of uncertainties in a much simpler fashion. Similarly as illustrated in the previous section, these extensions result even in computationally less demanding synthesis LMIs.

\begin{remark}[On the Conservatism]
	Both Theorems \ref{DI::theo::es_main} and \ref{DI::theo::es_main_svar} involve conservatism that is inherited from the employed closed-loop analysis criteria.
	Due to Lemma \ref{DI::lem::conservativsm_estimate_lifting} and the third statement of Theorem  \ref{DI::theo::clock}, both are linked to the clock based performance test in Theorem \ref{DI::theo::clock}. As we have mentioned and shown, this clock based test is in general conservative and the path based approach in Theorem \ref{DI::theo::path_Lyapunov} can lead to better results.
	If we take the clock based test as a starting point, then any additional conservatism of the criteria in Theorem~\ref{DI::theo::es_main} is exactly due to the relation in Lemma \ref{DI::lem::conservativsm_estimate_lifting}. Recall from the previous example section that this conservatism is expected to asymptotically vanish if the length of the involved filter increases to infinity.
	On the other hand, the additional conservatism of Theorem \ref{DI::theo::es_main_svar} is due to the introduction of slack variables with enforced structure, i.e., they are required to have a constant and common second block column, and there is no clear mechanism to reduce the conservatism introduced by this limitation.
\end{remark}

\subsection{Example}\label{DI::sec::exa_es}

As an illustration, let us consider an open-loop impulsive system \eqref{DI::eq::es_sys_alternative} with describing matrices given by
\begin{equation}
	\label{DI::eq::exa_syn}
	\mat{c|c}{A & B_d \\ \hline C_e & D_{ed} \\ \hdashline C_y & D_{yd}}
	= \mat{cc|c}{0.18 & 0.34 & -0.02 \\[0.75ex] -0.58 & 1.08 & -0.01 \\[0.75ex] \hline 0 & 1 & 0 \\ \hdashline 1 & 0 & 0}
	\teq{ and }
	\mat{c|c}{A_J & B_{Jd} \\ \hline C_{Je} & D_{Jed} \\ \hdashline C_{Jy} & D_{Jyd}}
	= \mat{cc|c}{0.47 & 0.41 & 0 \\[0.75ex] -0.01 & -0.02 & 1.32 \\[0.75ex] \hline 0 & 1 & 0 \\ \hdashline 0 & 0 & 0},
\end{equation}
which can also expressed as a feedback interconnection \eqref{DI::eq::es_sys} involving the impulsive operator \eqref{DI::eq::sys::del}.
Hence, we can design LTI estimators \eqref{DI::eq::es_con} achieving optimal upper bounds on the robust $\ell_2$-gain from the disturbance $d$ to the estimation error based on solving the inequalities in Theorem \ref{DI::theo::es_main} or \ref{DI::theo::es_main_svar} and by minimizing the upper bound~$\ga$.

Several of these upper bounds are given in Table \ref{DI::tab::es} for some dwell-time boundaries $T_{\min}$, $T_{\max}$ and some lengths $\nu$ of the filter \eqref{DI::eq::filter_TF_SS} involved in Theorem \ref{DI::theo::es_main}.
For this example, we observe that Theorem \ref{DI::theo::es_main} yields better upper bounds on the robust $\ell_2$-gain that even improve with increasing filter length $\nu$ if compared to the ones obtained by Theorem \ref{DI::theo::es_main_svar}. However, we also found some examples where the latter yields better upper bounds, which is possible since both results rely on different robust analysis criteria with different sources of conservatism.

Figure~\ref{DI::fig::es_exa} illustrates the to-be-estimated output $v$ of the system \eqref{DI::eq::es_sys_alternative} with \eqref{DI::eq::exa_syn} and $[T_{\min}, T_{\max}] = [9, 10]$ in response to the generalized disturbances
\begin{equation*}
	d_1(t) := 4\chi_{[0, 60]}(t) - 2 \chi_{(60, 200]}(t)
	\teq{and}
	d_2(t) := -2\cos(\tfrac{t}{2})\chi_{[0, 60]}(t) + 3 \sin(\tfrac{t}{4})\chi_{(60, 200]}(t)
\end{equation*}
at the left and right column, respectively; here, $\chi_{[a, b]}$ denotes the characteristic function which equals one on $[a, b]$ and vanishes elsewhere. It also shows the estimate $u$ of an LTI estimator constructed from Theorem~\ref{DI::theo::es_main} with a filter \eqref{DI::eq::filter_TF_SS} of length $\nu = 1$, from Theorem~\ref{DI::theo::es_main_svar} and from \texttt{hinfsyn} by ignoring the system's impulsive component in the top, middle and bottom row, respectively. 
Let us stress that the latter (nominal) design requires stability of the matrix $A$ in the flow component of \eqref{DI::eq::exa_syn} which is in stark contrast to Theorems~\ref{DI::theo::es_main} and \ref{DI::theo::es_main_svar}.
For this example, we observe that the estimators obtained from Theorems~\ref{DI::theo::es_main} and \ref{DI::theo::es_main_svar} admit roughly the same behavior and, despite being standard LTI systems, they approximate well the output $v$ of the impulsive system \eqref{DI::eq::es_sys_alternative} with \eqref{DI::eq::exa_syn}. In contrast, the estimator obtained from \texttt{hinfsyn} is not able to deal properly with the impulsive nature of the given system and provides only a poor approximation.

\begin{table}
	\caption{Upper bounds on the robust $\ell_2$-gain obtained by Theorems \ref{DI::theo::es_main} and \ref{DI::theo::es_main_svar} for the system \eqref{DI::eq::es_sys_alternative} with \eqref{DI::eq::exa_syn} and for some dwell-time boundaries $T_{\min}$, $T_{\max}$ and filter lengths $\nu$. }
	\label{DI::tab::es}
	\begin{center}
		\setlength{\tabcolsep}{7pt}%
		\renewcommand{\arraystretch}{1.2}%
		\begin{tabular}{ccccc}
			&Theorem \ref{DI::theo::es_main_svar} & \multicolumn{3}{c}{Theorem \ref{DI::theo::es_main}} \\ \cmidrule(l{1ex}){3-5}
			$(T_{\min}, T_{\max})$ & & $\nu = 1$ & $\nu = 2$ & $\nu = 3$ \\ \hline
			$(4, 5)$ & $3.574$ & $3.063$ & $2.513$ & $2.439$ \\ 
			$(5, 7)$ & $3.055$ & $2.655$ & $2.266$ & $2.147$ \\
			$(7, 9)$ & $2.239$ & $2.062$ & $1.950$ & $1.872$ \\
			$(9, 10)$ & $1.816$ & $1.755$ & $1.730$ & $1.709$ \\ \hline
		\end{tabular}
	\end{center}
\end{table}

\begin{figure}
	\begin{minipage}{0.49\textwidth}
		\includegraphics[width=\textwidth, trim=35 110 30 10, clip]{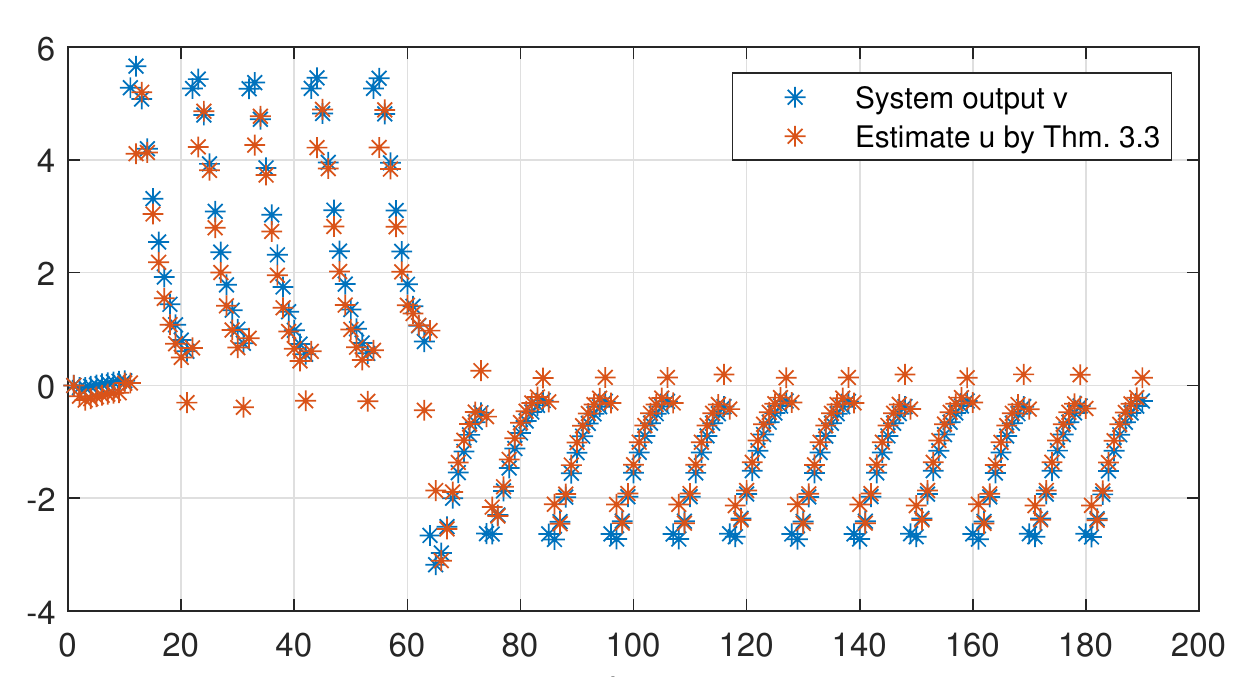}
	\end{minipage}
	\hfill
	\begin{minipage}{0.49\textwidth}
		\includegraphics[width=\textwidth, trim=35 110 30 10, clip]{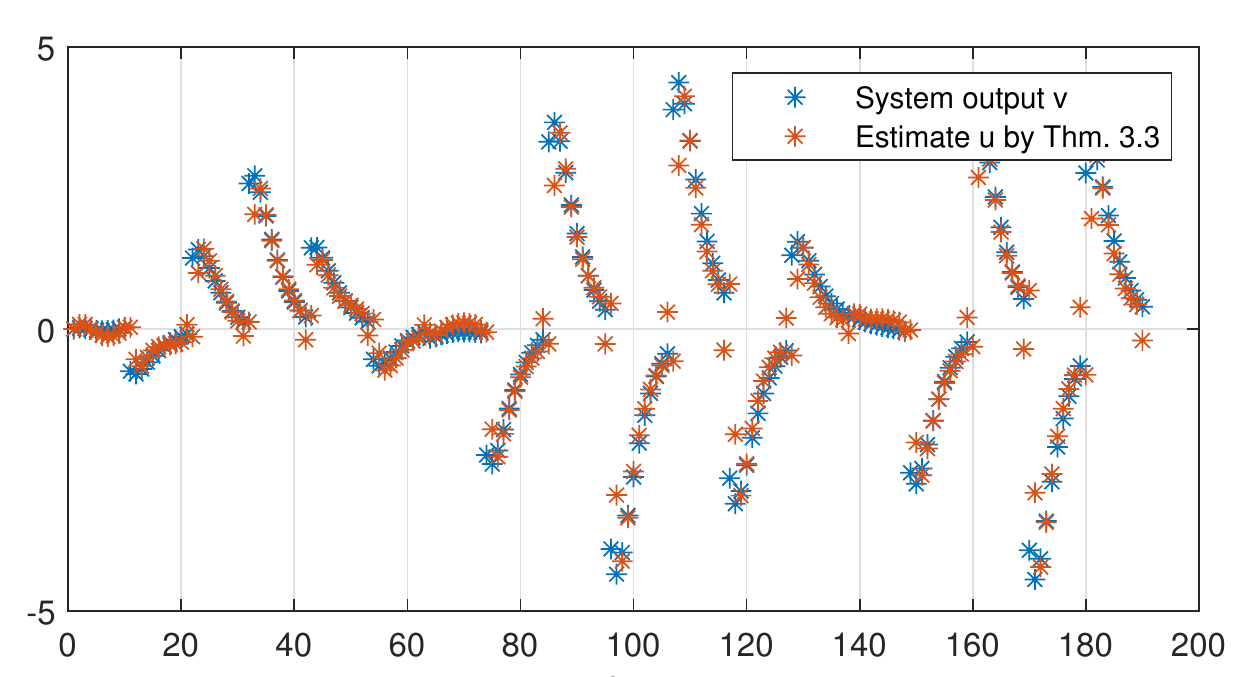}
	\end{minipage}
	\begin{minipage}{0.49\textwidth}
		\includegraphics[width=\textwidth, trim=35 100 30 10, clip]{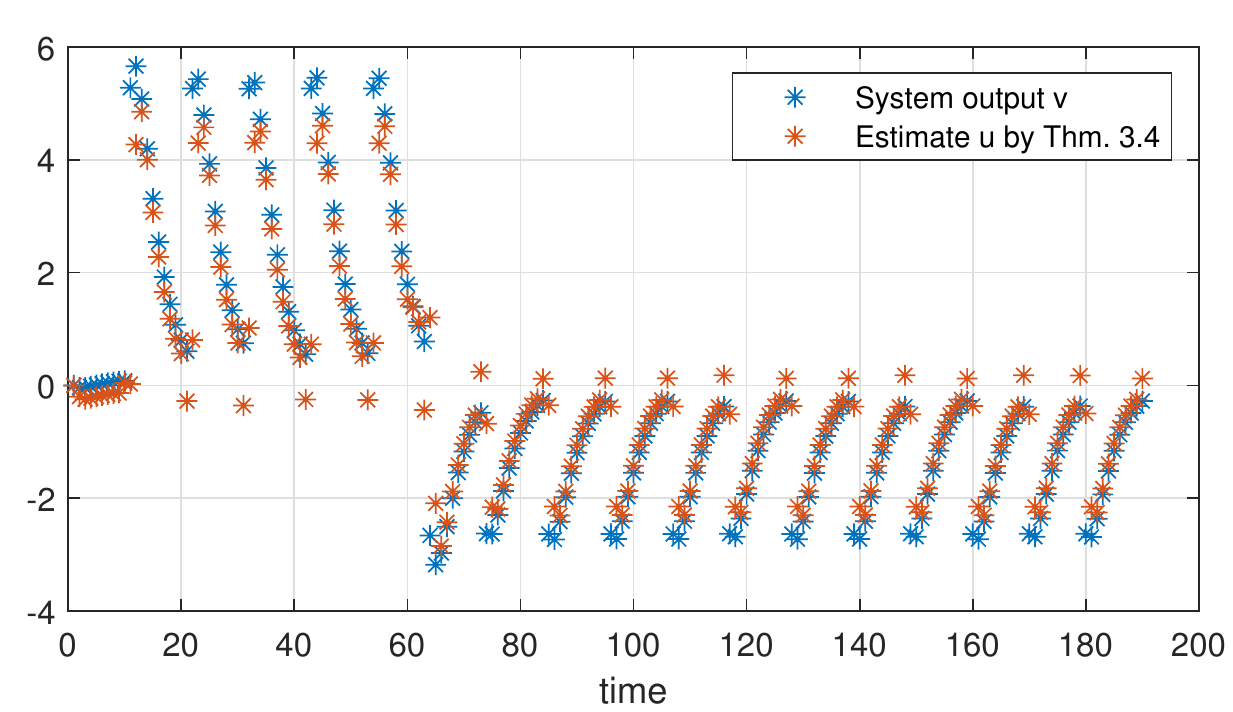}
	\end{minipage}
	\hfill
	\begin{minipage}{0.49\textwidth}
		\includegraphics[width=\textwidth, trim=35 110 30 10, clip]{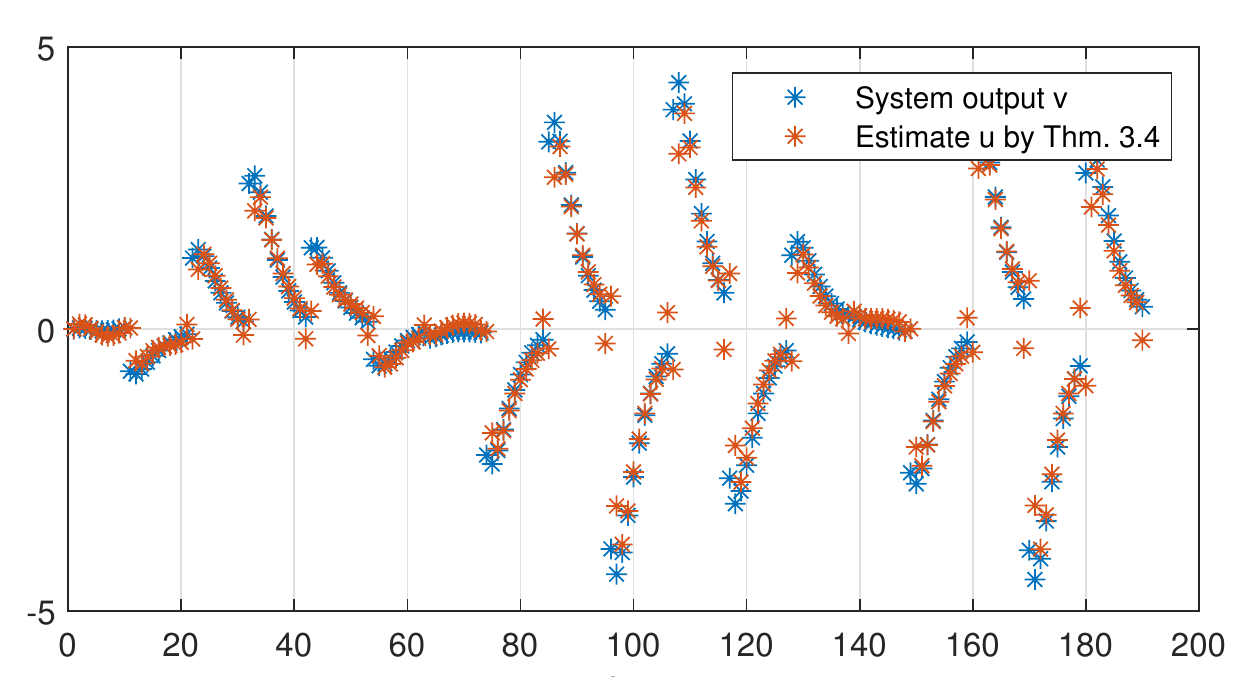}
	\end{minipage}
	\begin{minipage}{0.49\textwidth}
		\includegraphics[width=\textwidth, trim=35 110 30 10, clip]{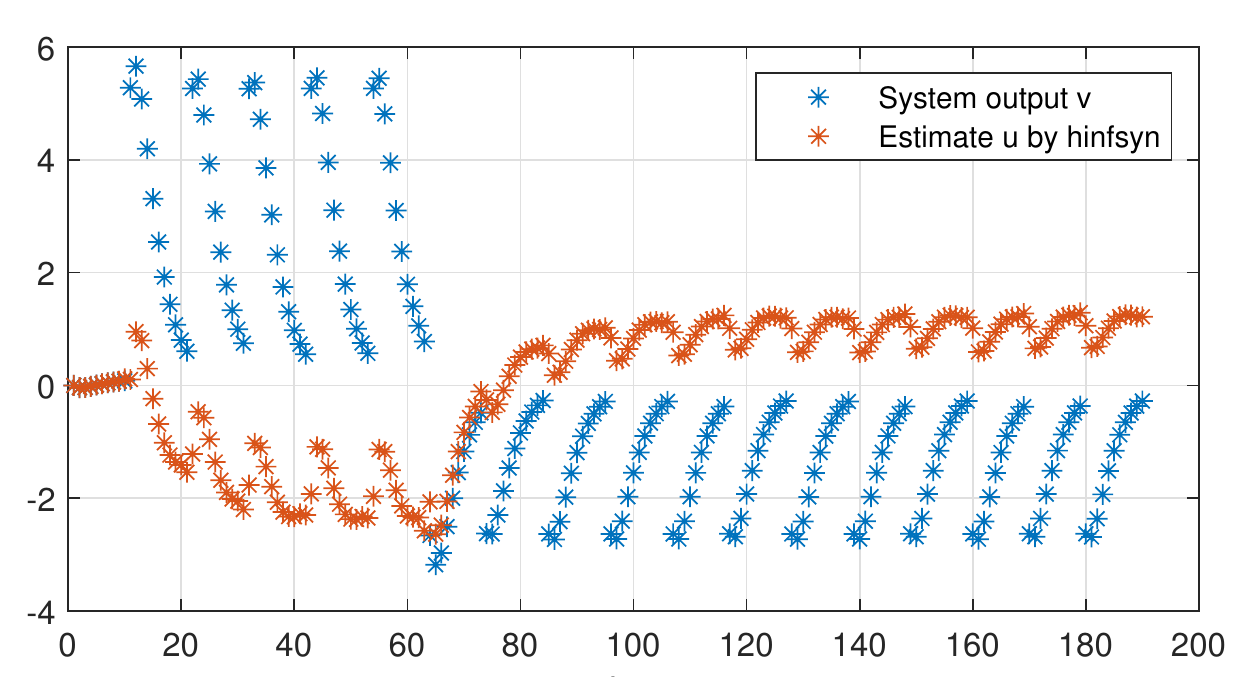}
	\end{minipage}
	\hfill
	\begin{minipage}{0.49\textwidth}
		\includegraphics[width=\textwidth, trim=35 100 30 10, clip]{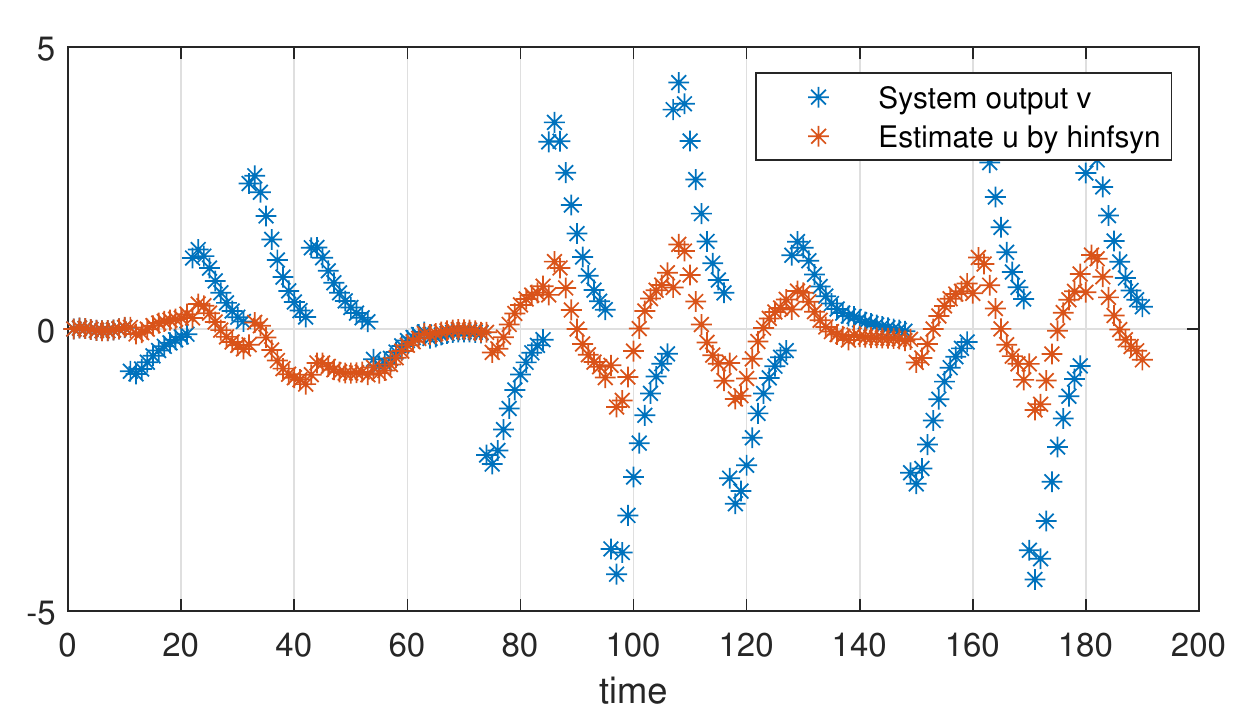}
	\end{minipage}
	\caption{To-be-estimated output $v$ of the system \eqref{DI::eq::es_sys_alternative} with \eqref{DI::eq::exa_syn} and $[T_{\min}, T_{\max}] = [9, 10]$ in response to the disturbances $d_1$ (left column) and $d_2$ (right column) as well as the estimate $u$ generated by an LTI estimator obtained from Theorem~\ref{DI::theo::es_main} (top row), Theorem~\ref{DI::theo::es_main_svar} (middle row) and from \texttt{hinfsyn} by ignoring the system's jump component (bottom row).}
	\label{DI::fig::es_exa}
\end{figure}

\section{Conclusions}

We consider linear discrete-time impulsive systems and employ IQCs for their analysis. This is achieved by viewing those systems as feedback interconnections of some non-impulsive linear discrete-time system with an impulsive operator and by constructing dedicated IQCs for the latter operator.
Similarly as the related approach in \cite{Hol22}, this not only enables extensions to impulsive systems affected by several types of uncertainties, but also permits us to tackle challenging controller design problems in which the full state might be unavailable for control.
Here, we consider the challenging problem of estimating the response of an impulsive system by means of a non-impulsive estimator.

Future research could for example deal with analogous results for impulsive operators whose impulse instants depend on parts of the system's state.
Moreover, obtaining similar results for impulsive systems with a continuous-time flow component would be useful and does not seem immediate in this particular context.



\bibliography{literatur}


\begin{appendix}
	
	\section{Proofs of Recapitulated and Modified Results}
	
	\begin{proof}[Proof of Theorem \ref{DI::theo::basic_Lyapunov}]
		Let $x(0) \in \R^n$, $d\in \ell_2^{n_d}$ and $(t_k)_{k\in \N_0}$ with \eqref{DI::eq::RDT} be arbitrary and let $x, e$ be the corresponding trajectory of the system \eqref{DI::eq::sys_standard}. By strictness of the finitely many inequalities \eqref{DI::theo::eq::basic_Lyapunov_LMIs}, we can find constants $\alpha, \beta,\eps >0$ and $\rho\in (0, 1)$ with $\alpha I \cle X \cle \beta I$ and such that \eqref{DI::theo::eq::basic_Lyapunov_LMIsb} persists to hold if we replace $-X$ and $R$ by $-\rho X$ and $R + \eps I$, respectively.

		Next, let us define the function $\eta: k \mapsto x(t_k+1)^\top Xx(t_k+1)$. Then we have, by the description \eqref{DI::eq::sys_standard}, by applying the discrete-time variation of constants formula several times, and by the modification of \eqref{DI::theo::eq::basic_Lyapunov_LMIsb},
		\begin{equation}
			\eta(k+1) - \rho \eta(k) + \sum_{t = t_k+1}^{t_{k+1}}(\bullet)^\top P\mat{c}{e(t) \\ d(t)} + \eps \|d(t)\|^2
			\leq 0
			%
			%
			%
			%
			\teq{ for all }k\in \N_0.
			\label{DI::pro::dissip1}
		\end{equation}
		\textit{Stability.} If $d = 0$, we infer from $Q \cge 0$ and the latter inequality that $\eta(k+1) - \rho \eta(k) \leq 0$ holds for all $k\in \N_0$. This implies
		\begin{equation*}
			\eta(k) \leq \rho^k \eta(0) \leq \beta \rho^k \|x(0)\|^2 \teq{ for all }k\in \N_0.
		\end{equation*}	
		Now let $t \in [t_k+1, t_{k+1}] \cap \N_0$ for some $k \in \N_0$ be arbitrary and let us abbreviate $\kappa := \max_{0 \leq j \leq T_{\max}} \|A^j\|^2$. Then we conclude
		\begin{equation*}
			\|x(t)\|^2 \leq \kappa \|x(t_k+1)\|^2 \leq \frac{\kappa}{\alpha} \eta(k) \leq \frac{\kappa\beta}{\alpha}  \rho^k \|x(0)\|^2
			\leq (M \hat \rho^t \|x(0)\|)^2
		\end{equation*}
		for $M:= \sqrt{\frac{\kappa \beta}{\alpha}}$ and $\hat \rho := \rho^{\frac{1}{2T_{\max}}} \in (0, 1)$. This shows stability of the system \eqref{DI::eq::sys_standard}.
		
		\noindent \textit{Quadratic Performance.} If $x(0) = 0$, we infer from \eqref{DI::pro::dissip1} and $X \cg 0$ that
		\begin{multline*}
			\sum_{t = 0}^{t_{k+1}}(\bullet)^\top P\mat{c}{e(t) \\ d(t)} + \eps \|d(t)\|^2
			= \sum_{l = 0}^k \left(	\sum_{t = t_l+1}^{t_{l+1}}(\bullet)^\top P\mat{c}{e(t) \\ d(t)} + \eps \|d(t)\|^2 \right) \\
			\leq \sum_{l = 0}^{k} \rho \eta(l) - \eta(l+1)
			\leq \sum_{l = 0}^{k} \eta(l) - \eta(l+1)
			= \eta(0) - \eta(k+1)
			\leq 0
		\end{multline*}
		holds for all $k\in \N_0$. This yields the claim by taking the limit $k\to \infty$.
	\end{proof}

	\vspace{2ex}
	
	\begin{proof}[Proof of Theorem \ref{DI::theo::path_Lyapunov}]
		For notational convenience we only show stability; quadratic performance is shown similarly is in the proof of Theorem \ref{DI::theo::basic_Lyapunov}.
		Let $x(0) \in \R^n$ and $(t_k)_{k\in \N_0}$ with \eqref{DI::eq::RDT} be arbitrary and let $x,$ be the corresponding state trajectory of the system \eqref{DI::eq::sys_standard} in response to $d = 0$. We can then find a sequence $(p^{(k)})_{k\in\N_0}$ with $p^{(k)} \in \Ps_L$ and $p^{(k+1)} \in \Ps_L^+(p^{(k)})$ for all $k\in \N_0$ such that
		\begin{equation}
			x(L(k+1)) = A_{p^{(k)}}x(Lk)
			\label{DI::eq::sysLA}
		\end{equation}
		holds for all $k\in \N_0$. Moreover, by strictness of the finitely many inequalities \eqref{DI::theo::eq::path:based:LMIs}, we can find constants $\alpha, \beta >0$ and $\rho\in (0, 1)$ with $\alpha I \cle X_p \cle \beta I$ for all $p \in \Ps_{L}$ and such that \eqref{DI::theo::eq::path:based:LMIsb} still holds if we replace $-X_p$ by $-\rho X_p$. Note that the left upper block of the latter inequality implies, by $Q \cge 0$,
		\begin{equation}
			\mat{c}{A_p \\ I}^\top \mat{cc}{X_q & 0 \\ 0 & -\rho X_p}\mat{c}{A_p \\ I} \cl 0
			\teq{ for all }q \in \Ps_{L}^+(p) \text{ ~and all~ } p \in \Ps_L.
			\label{DI::eq::sysLB}
		\end{equation}
		
		Next, let us define the function $\eta: k \mapsto x(Lk)^\top X_{p^{(k)}}x(Lk)$.
		Then we have, by the description \eqref{DI::eq::sysLA}, $p^{(k+1)} \in \Ps_L^+(p^{(k)})$ and by \eqref{DI::eq::sysLB},
		\begin{equation*}
			\eta(k+1) = (\bullet)^\top X_{p^{(k+1)}}x(L(k+1)) = (\bullet)^\top X_{p^{(k+1)}} A_{p^{(k)}} x(Lk)
			\leq \rho(\bullet)^\top X_{p^{(k)}} x(Lk) = \rho \eta(k)
			\teq{ for all }k\in \N_0.
		\end{equation*}
		Hence, we infer
		\begin{equation*}
			\eta(k) \leq \rho^k \eta(0) \leq \beta \rho^k \|x(0)\|^2 \teq{ for all }k\in \N_0.
		\end{equation*}	
		Now let $t \in [Lk, L(k+1)) \cap \N_0$ for some $k \in \N_0$ be arbitrary and let us abbreviate $\kappa := \max_{0 \leq i,j \leq L} \|A^j\|^2 \|A_J^i\|^2$. Then we conclude
		\begin{equation*}
			\|x(t)\|^2 \leq \kappa \|x(Lk)\|^2 \leq \frac{\kappa}{\alpha} \eta(k) \leq \frac{\kappa\beta}{\alpha} \rho^k \|x(0)\|^2
		\end{equation*}
		which proves the claim.
	\end{proof}

	\vspace{2ex}

	\begin{proof}[Proof of Theorem \ref{DI::theo::clock}]
		{\bfseries First Statement.}
		Let $x(0) \in \R^n$, $d\in \ell_2^{n_d}$ and $(t_k)_{k\in \N_0}$ with \eqref{DI::eq::RDT} be arbitrary and let $x, e$ be the corresponding trajectory of the system \eqref{DI::eq::sys_standard}. By strictness of the finitely many inequalities \eqref{DI::theo::eq::clock_Lyapunov_LMIs}, we can find constants $\alpha, \beta,\eps >0$ and $\rho\in (0, 1)$ with $\alpha I \cle X_k \cle \beta I$ for all $k \in [0, T_{\max}]\cap \N_0$ and such that \eqref{DI::theo::eq::clock_Lyapunov_LMIsb}, \eqref{DI::theo::eq::clock_Lyapunov_LMIsc} remain to hold if we replace $-X$ and $R$ by $-\rho X$ and $R + \eps I$, respectively

		Next, let us define the function $\eta: t \mapsto x(t)^\top X_{\theta(t)}x(t)$ involving the clock \eqref{DI::eq::clock}. Then we have, by the description \eqref{DI::eq::sys_standard}, the definition of $\theta$ and the modification of \eqref{DI::theo::eq::clock_Lyapunov_LMIsb},
		\begin{equation*}
			\eta(t+1) - \rho \eta(t)
			= (\bullet)^\top \mat{cc}{X_{\theta(t+1)} & 0 \\ 0 & -\rho X_{\theta(t)}}\mat{cc}{A & B \\ I & 0}\mat{c}{x(t) \\ d(t)}
			\leq - (\bullet)^\top P\mat{c}{e(t) \\ d(t)} - \eps \|d(t)\|^2
			%
			%
		\end{equation*}
		for all $t \in \N_0 \setminus \{t_1, t_2, \dots \}$. Similarly, we have, by the modification of \eqref{DI::theo::eq::clock_Lyapunov_LMIsc},
		\begin{equation*}
			\eta(t_k+1) - \rho \eta(t_k)
			= (\bullet)^\top \mat{cc}{X_0 & 0 \\ 0 & -\rho X_{\theta(t_k)}}\mat{cc}{A_J & B_J \\ I & 0}\mat{c}{x(t_k) \\ d(t_k)}
			\leq - (\bullet)^\top P\mat{c}{e(t_k) \\ d(t_k)} - \eps \|d(t_k)\|^2
		\end{equation*}
		for all $k \in \N$. In summary, we get
		\begin{equation}
			\label{DI::eq::pro::clock}
			\eta(t+1) - \rho \eta(t) + (\bullet)^\top P\mat{c}{e(t) \\ d(t)} + \eps \|d(t)\|^2
			\leq 0
			\teq{ for all }t \in \N_0.
		\end{equation}
		\textit{Stability.} If $d = 0$, we infer from $Q \cge 0$ and the latter inequality that $\eta(t+1) - \rho \eta(t) \leq 0$ holds for all $t \in \N_0$. This implies
		\begin{equation*}
			\eta(t) \leq \rho^t \eta(0)
			\teq{ for all } t \in \N_0.
		\end{equation*}
		Then we conclude
		\begin{equation*}
			\|x(t)\|^2 \leq \frac{1}{\alpha} \eta(t) \leq \frac{1}{\alpha}\rho^t \eta(0) \leq \frac{\beta}{\alpha} \rho^t \|x(0)\|^2
			\teq{ for all }t \in \N_0
		\end{equation*}
		which yields stability of the system \eqref{DI::eq::sys_standard}.
		
		\noindent\textit{Quadratic Performance.} If $x(0) = 0$, we infer from \eqref{DI::eq::pro::clock} and $X_k \cg 0$ for all $k$ that
		\begin{equation*}
			\sum_{t = 0}^{k}(\bullet)^\top P\mat{c}{e(t) \\ d(t)} + \eps \|d(t)\|^2
			\leq \sum_{t = 0}^{k} \rho \eta(t) - \eta(t+1)
			\leq \sum_{t = 0}^{k} \eta(t) - \eta(t+1)
			= \eta(0) - \eta(k+1)
			\leq 0
		\end{equation*}
		holds for all $k\in \N_0$. This yields the claim by taking the limit $k\to \infty$.
		
		\noindent{\bfseries Second Statement.} We show that $X := X_0$ satisfies the desired inequalities \eqref{DI::theo::eq::basic_Lyapunov_LMIs}, where \eqref{DI::theo::eq::basic_Lyapunov_LMIsa} is immediate from \eqref{DI::theo::eq::clock_Lyapunov_LMIsa}. To this end, let $k \in [T_{\min}, T_{\max}]$ be arbitrary and let us define the matrices
		\begin{equation*}
			U_0 := \mat{ccccc}{I & 0 & 0 & \cdots & 0\\ 0 & I& 0 & \cdots & 0} \teq{ as well as }U_l := \mat{cccccccc}{A^l & A^{l-1}B & \cdots & B & 0 & 0 & \cdots & 0 \\ 0 & 0 & \cdots & 0 & I & 0 & \cdots & 0}
		\end{equation*}
		for $l \in \{1, \dots, k\}$ which all have the same dimensions. Note that these matrices satisfy
		\begin{equation*}
			\mat{cc}{I & 0}U_{l+1} = \mat{cc}{A & B} U_l
			\teq{ for all }l\in \{0, \dots, k-1\} \teq{ and }  \mat{cc}{I & 0}U_{0} = \mat{cccc}{I & 0 & \cdots & 0}.
		\end{equation*}
		These matrices permit us to express the left-hand side of \eqref{DI::theo::eq::basic_Lyapunov_LMIsb} for $X=X_0$ as
		\begin{equation*}
			\begin{aligned}
				&\phantom{=}~(\bullet)^\top X_0 \mat{cc}{A_J & B_J}U_k - (\bullet)^\top X_0 \mat{cc}{I & 0} U_0 + \sum_{l = 0}^{k-1} (\bullet)^\top P \mat{cc}{C & D \\ 0 & I}U_l + (\bullet)^\top P \mat{cc}{C_J & D_J \\ 0 & I} U_k \\
				&= (\bullet)^\top\! \mat{cc|c}{X_0 & 0 & \\ 0 & -X_k & \\ \hline && P}\! \mat{cc}{A_J & B_J \\ I & 0 \\ \hline C_J & D_J \\ 0 & I}\! U_k + \underbrace{(\bullet)^\top X_k \mat{cc}{I & 0}U_k- (\bullet)^\top X_0 \mat{cc}{I & 0} U_0 + \sum_{l = 0}^{k-1} (\bullet)^\top\! P \mat{cc}{C & D \\ 0 & I}\!U_l}_{=: \Omega}.
			\end{aligned}
		\end{equation*}
		Via a telescoping sum and by the properties of $U_l$, the matrix $\Omega$ equals
		\begin{equation*}
			\arraycolsep=2pt
			\sum_{l = 0}^{k-1}  \left[(\bullet)^\top  X_{l+1}\mat{cc}{I & 0}U_{l+1} - (\bullet)^\top X_l \mat{cc}{I & 0}U_l +   (\bullet)^\top P \mat{cc}{C & D \\ 0 & I}U_l\right]
			= \sum_{l=0}^{k-1}(\bullet)^\top \mat{cc|c}{X_{l+1} & 0 & \\ 0 & -X_l & \\ \hline && P}\mat{cc}{A & B \\ I & 0 \\ \hline C & D \\ 0 & I} U_l.
		\end{equation*}
		This shows that the left-hand side of \eqref{DI::theo::eq::basic_Lyapunov_LMIsb} for $X=X_0$ equals
		\begin{equation*}
			\arraycolsep=1pt
			(\bullet)^{\!\top} \diag\left( \hspace{-0.5ex}(\bullet)^{\!\top} \!\mat{cc|c}{X_{1} & 0 & \\ 0 & -\!X_0 & \\ \hline && P}\hspace{-1ex}\mat{cc}{A & B \\ I & 0 \\ \hline C & D \\ 0 & I}\hspace{-0.8ex},
			\dots,
			(\bullet)^{\!\top} \!\mat{cc|c}{X_{k} & 0 & \\ 0 & -\!X_{k-1} & \\ \hline && P}\hspace{-1ex}\mat{cc}{A & B \\ I & 0 \\ \hline C & D \\ 0 & I}\hspace{-0.8ex},   (\bullet)^{\!\top} \!\mat{cc|c}{X_0 & 0 & \\ 0 & -\!X_k & \\ \hline && P}\hspace{-1ex} \mat{cc}{A_J & B_J \\ I & 0 \\ \hline C_J & D_J \\ 0 & I} \hspace{-0.9ex}\right) \mat{c}{U_0 \\ \vdots \\U_k}\!.
		\end{equation*}
		Since the inner part of this matrix is negative definite by \eqref{DI::theo::eq::clock_Lyapunov_LMIsb} and \eqref{DI::theo::eq::clock_Lyapunov_LMIsc}
		and since $\smat{U_0 \\ \vdots \\ U_k}$ has full column rank, we can conclude that \eqref{DI::theo::eq::basic_Lyapunov_LMIsb} holds, which finishes the proof.
		
		\noindent {\bfseries Third Statement.} We only sketch this part by showing it in the case of $T_{\min} = T_{\max} = 1$. The general case is obtained similarly, but involves taking more Schur complements. Note at first that the middle block of \eqref{DI::theo::eq::basic_Lyapunov_LMIsb} for $k=1$ implies $(\bullet)^\top P \smat{D \\ I} \cl 0$ by $X \cg 0$ and $Q = (\bullet)^\top P \smat{I \\ 0} \cge 0$.
		Since $P$ is invertible and again due to $Q = (\bullet)^\top P \smat{I \\ 0} \cge 0$, we can apply the dualization lemma (see, e.g., \cite{SchWei00})
		to reformulate \eqref{DI::theo::eq::basic_Lyapunov_LMIsb}  as
		\begin{equation}
			(\bullet)^\top \t P \mat{c}{0 \\ I} \cle 0
			\teq{ and }
			(\bullet)^\top \mat{cc|cc}{Y_0 & 0 & \\ 0 & -Y_0 & \\ \hline &&\t P & 0 \\ && 0 & \t P}\mat{ccc}{I & 0 & 0 \\ -A^\top A_J^\top & - C^\top & -A^\top C_J^\top \\[0.1ex] \hline
				-\t B^\top A_J^\top & -\t D^\top & -\t B^\top C_J^\top \\
				- \t B_J^\top & 0 & -\t D_J^\top} \cg 0
			\label{DI::pro::eq::dual_lifted}
		\end{equation}
		for $Y_0 := X^{-1}$, $\t P := P^{-1}$, $\t B := (0, B)$, $\t D := (-I, D)$, $\t B_J := (0, B_J)$ and $\t D_J := (-I, D_J)$.
		
		Next, observe that we can infer $\t D \t P \t D^\top - CY_0C^\top \cg 0$ from the $(2, 2)$ block of
		\eqref{DI::pro::eq::dual_lifted}. Together with $Y_0 \cg 0$ and $\t B \t P \t B^\top = (\bullet)^\top \t P \smat{0 \\ I} B^\top \cle 0$, we can define
		\begin{equation*}
			Y_1 :=AY_0A^\top - \t B \t P \t B^\top + \big(\t B\t P \t D^\top - A Y_0 C^\top\big) \big(\t D\t P \t D^\top - C Y_0 C^\top \big)^{-1} (\bullet)^\top + \eps I
		\end{equation*}
		and conclude that this matrix is positive definite for any $\eps > 0$.
		This particular choice assures that
		\begin{equation}
			(\bullet)^\top \mat{cc|c}{Y_1 & 0 & \\ 0 & -Y_0 & \\ \hline && \t P} \mat{cc}{I & 0 \\ -A^\top & -C^\top \\ \hline 0 & I \\ -B^\top & -D^\top} \cg 0
			\label{DI::eq::pro::dual_flow}
		\end{equation}
		holds. Indeed, by the Schur complement and $\t D \t P \t D^\top - CY_0C^\top \cg 0$, the latter inequality is equivalent to
		\begin{equation*}
			0 \cl Y_1 -AY_0A^\top + \t B \t P \t B^\top - \big(\t B \t P \t D^\top - A Y_0 C^\top \big) \big(\t D \t P \t D^\top - CY_0C^\top \big)^{-1}(\bullet)^\top
			=  \eps I
		\end{equation*}
		which is trivially true.
		
		Moreover, this choice also guarantees, for some small enough $\eps > 0$,
		\begin{equation}
			(\bullet)^\top \mat{cc|c}{Y_0 & 0 & \\ 0 & -Y_1 & \\ \hline && \t P}\mat{cc}{I & 0 \\ -A_J^\top & -C_J^\top \\ \hline 0 & I \\ -B_J^\top & - D_J^\top} \cg 0.
			\label{DI::eq::pro::dual_jump}
		\end{equation}
		Indeed, by Schur, the definition of $Y_1$ and a few computations, the latter inequality is equivalent to
		\begin{equation*}
			(\bullet)^\top \mat{cc|cc}{Y_0 & 0 & \\ 0 & -Y_0 & \\ \hline && \t P & 0 \\ && 0 & \t P}\mat{ccc}{I & 0 & 0\\ -A^\top A_J^\top & -A^\top C_J^\top & -C^\top \\ \hline -\t B^\top A_J^\top & - \t B^\top C_J^\top & -\t D^\top \\ -\t B_J^\top  & -\t D_J^\top & 0} - \eps (\bullet)^\top \mat{ccc}{A_J^\top & C_J^\top & 0} \cg 0
		\end{equation*}
		and the claim follows with \eqref{DI::pro::eq::dual_lifted}.
		
		Finally, we can apply the dualization lemma for the inequalities \eqref{DI::eq::pro::dual_flow} and \eqref{DI::eq::pro::dual_jump} to infer
		\begin{equation*}
			X_0\cg 0,\quad X_1 \cg 0,\quad
			(\bullet)^\top \mat{cc|c}{X_1 & 0 & \\ 0 & -X_0 & \\ \hline && P}\mat{cc}{A & B \\ I & 0 \\ \hline C & D \\ 0 & I} \cl 0
			\text{ ~and~ }
			(\bullet)^\top \mat{cc|c}{X_0 & 0 & \\ 0 & -X_1 & \\ \hline && P}\mat{cc}{A_J & B_J \\ I & 0 \\ \hline C_J & D_J \\ 0 & I} \cl 0
		\end{equation*}
		for $X_0 := Y_0^{-1}$ and $X_1 := Y_1^{-1}$. As desired, this leads to
		\eqref{DI::theo::eq::clock_Lyapunov_LMIsb} and \eqref{DI::theo::eq::clock_Lyapunov_LMIsc} for $k\in[0,T_{\max}-1]=[0,0]$ and $k\in [T_{\min},T_{\max}]=[1,1]$, respectively.
	\end{proof}
	
	\vspace{2ex}
	
	\begin{proof}[Proof of Theorem \ref{DI::theo::IQC}]
		Let $x(0) \in \R^n$ and $d\in \ell_2^{n_d}$ be arbitrary and let $x, \xi, z, w, u, y, e$ be the corresponding trajectory of the interconnection \eqref{DI::eq::sys_intercon} and the filter \eqref{DI::eq::filter}. By strictness of the inequality \eqref{DI::theo::eq::maina}, we can find some small $\eps > 0$ such that this inequality is still valid if we replace $-X$ by $-X + \eps I$ and $R$ by $R + \eps I$.
		
		Next, let us define the functions $\eta: t \mapsto (\bullet)^\top X \smat{\xi(t)\\ x(t)}$ and $\nu : t_k+1 \mapsto \xi(t_k+1)^\top Z(k)\xi(t_k+1)$. Then we have, by the dynamics of the augmented system \eqref{DI::eq::sys_augmented} and by the modification of \eqref{DI::theo::eq::maina},
		\begin{multline*}
			\eta(t_{k}+1) -\eta(0) + \eps \sum_{t = 0}^{t_{k}}\left\|\mat{c}{x(t)\\d(t)}\right\|^2
			= \sum_{t = 0}^{t_{k}}\eta(t+1) -\eta(t) + \eps \left\|\mat{c}{x(t)\\d(t)}\right\|^2 \\
			\stackrel{\eqref{DI::theo::eq::maina}}{\leq} -\sum_{t = 0}^{t_{k}} y(t)^\top M y(t) -\sum_{t = 0}^{t_{k}} (\bullet)^\top P \mat{c}{e(t) \\ d(t)} 
			\stackrel{\eqref{DI::eq::pointwise_IQC}}{\leq}  \nu(t_{k}+1)-\sum_{t = 0}^{t_{k}} (\bullet)^\top P \mat{c}{e(t) \\ d(t)}  
		\end{multline*}
		for all $k \in \N$. This gives
		\begin{equation}
			\label{DI::eq::pro::IQC}
			\sum_{t = 0}^{t_{k}} (\bullet)^\top P \mat{c}{e(t) \\ d(t)} + \eps \sum_{t = 0}^{t_{k}} \left\|\mat{c}{x(t)\\d(t)}\right\|^2  \leq \eta(0) - (\eta - \nu)(t_k+1) \stackrel{\eqref{DI::theo::eq::mainb}}{\leq} \eta(0)
			\teq{ for all } k \in  \N.
		\end{equation}
		\textit{Stability.} If $d = 0$, we infer from $Q \cge 0$ and the latter inequality that
		\begin{equation*}
			\eps \sum_{t = 0}^{t_k} \|x(t)\|^2 \leq \eta(0)
			\teq{holds for all}k\in \N.
		\end{equation*}
		This yields the claim by letting $k$ approach infinity.
		
		\noindent\textit{Quadratic Performance.} If $x(0) = 0$, we have $\eta(0) = 0$ since $\xi(0) = 0$. Then we infer from \eqref{DI::eq::pro::IQC} that
		\begin{equation*}
			\sum_{t = 0}^{t_{k}} (\bullet)^\top P \mat{c}{e(t) \\ d(t)} \leq - \eps \sum_{t = 0}^{t_k} \|d(t)\|^2 - \eps \sum_{t = 0}^{t_k} \|x(t)\|^2
			\leq - \eps \sum_{t = 0}^{t_k} \|d(t)\|^2
			\teq{ holds for all }k\in \N.
		\end{equation*}
		This yields the claim by taking the limit $k \to \infty$.
	\end{proof}

	%
	%
	%
	%
	%
	%
	%

\end{appendix}

\end{document}